%% file: JMCentre.tex
\documentclass{amsart}
\input{preamble}

\title[The center conjecture for cyclotomic Hecke algebras]{The center conjecture for cyclotomic Hecke algebras and cohomology of quiver varieties}

 \author{Jun Hu}
 \address{Key Laboratory of Algebraic Lie Theory and Analysis of Ministry of Education\\
 School of Mathematics and Statistics\\
 Beijing Institute of Technology\\
 Beijing, 100081, P.R. China}
 \email{junhu404@bit.edu.cn}

\author{Andrew Mathas}
\address{School of Mathematics and Statistics\\
University of Sydney\\
NSW 2006, Australia}
\email{andrew.mathas@sydney.edu.au}

\author{Lei Shi}
\address{Max-Planck-Institut f\"ur Mathematik\\
Vivatsgasse 7, 53111 Bonn\\
Germany}

\email{leishi202406@163.com}
\author{Ben Webster}
\address{Department of Pure Mathematics, University of Waterloo \& Perimeter Institute for Theoretical Physics, Waterloo, ON, Canada}
\email{ben.webster@uwaterloo.ca}

\date{\today}

\begin{document}

\begin{abstract}
For a quiver without edge loops, we construct an isomorphism from the cohomology of a Nakajima quiver variety with coefficients in an arbitrary field to the Jucys--Murphy center of the corresponding cyclotomic KLR algebra over that field. In affine type $A^{(1)}_{e-1}$, combining this with a dimension count shows that this Jucys--Murphy center coincides with the full center in characteristic $0$, when the characteristic does not divide $e$, and when the characteristic equals $e$.  This follows from a geometric theorem showing that the sum of these cohomology rings is cocyclic as a module over an integral form of a loop algebra.

As a consequence, we prove that the center of a cyclotomic Hecke algebra of type $G(r,1,n)$ with $q\neq 1$ over any field is generated by symmetric polynomials in its Jucys--Murphy elements, and the same holds over any commutative domain in which $q-1$ is invertible.
We also prove a generalization to the integral case of the conjecture of
Krylov, Paegelow and Shlykov identifying the equivariant $K$-theory of the
core of a Gieseker variety with the cocenter of the generic cyclotomic
Hecke algebra, including the natural extension of this conjecture to
positive characteristic.
\end{abstract}

\maketitle

\tableofcontents

\section{Introduction}
\label{sec:intro}

The $(\Lambda-\alpha)$-weight space of an irreducible highest weight
representation of a Kac--Moody algebra~$\mathfrak{g}$ carries two
natural commutative algebras, one geometric and one algebraic. On the geometric side is the cohomology ring $H^*(\fM_{\Lambda,\alpha})$ of a Nakajima quiver variety. On the algebraic side is the center $Z(\Rn[\alpha])$ of the cyclotomic KLR algebra $\Rn[\alpha]$, which categorifies this weight space. Here
$\mathfrak{g}$ is the symmetric Kac--Moody algebra of a finite graph
$\Gamma=(I,E)$, without edge loops and possibly with multiple edges,~$\Lambda$ is a dominant weight, and $\alpha\in Q^+$ lies in the positive root cone.

Since the cyclotomic KLR algebra $\Rn[\alpha]$ is an algebraic analogue of the quiver variety $\fM_{\Lambda,\alpha}$, it is natural to expect a connection between these two algebras.  Furthermore, the isomorphism of Brundan and Kleshchev \cite{BKKL} between cyclotomic KLR algebras and blocks of cyclotomic Hecke algebras intertwines this with the question of understanding the center of a cyclotomic Hecke algebra, a question of longstanding interest in representation theory.

The isomorphism $\phi\colon H^*(\fM_{\Lambda,\alpha};\C)\to Z(\Rn[\alpha](\C))$ was proven in finite type A by Brundan \cite{Brundan:degenCentre} and in general finite type by Shan, Varagnolo and Vasserot \cite{shanCenterQuiver2017} and the fourth-named author \cite{Webcenter}.  The latter two proofs used techniques in the representation theory of current algebras which cannot be applied in affine type A.

The approach of \cite{Webcenter} is greatly simplified by the work of McGerty and Nevins \cite{mcgertyKirwanSurjectivity2018} on Kirwan surjectivity.  The quiver varieties $\fM_{\Lambda,\alpha}$ carry tautological bundles $V_i$ for each vertex $i\in I$, whose Chern classes
 $c_{i;j}\in H^{2j}(\fM_{\Lambda,\alpha};\Z)$ give natural elements of the cohomology ring.  McGerty and Nevins showed that the Kirwan map from the polynomial ring in these classes to $H^*(\fM_{\Lambda,\alpha};\Z)$ is surjective.  Localization to torus fixed points determines its kernel in principle, but at the moment, no closed-form presentation of this kernel seems to be known.

Assume that $\alpha\in Q_n^+$, that is, $\alpha$ is a sum of $n$ simple roots.  For each $i\in I, 1\leq j\leq n$, define $e_{i;j}:=\sum_{\nu\in I^\alpha}e_j(y_a\mid\nu_a=i)e(\nu)\in Z(\Rn[\alpha])$, where $I^\alpha$ is the set of sequences $\nu\in I^n$ with $\sum_a\al_{\nu_a}=\alpha$; this is the $j$th elementary symmetric polynomial in the dots on the strands with label $i$ in the KLR algebra $\Rn[\alpha]$. We call the subalgebra generated by the $e_{i;j}, i\in I, 1\leq j\leq n$, the \emph{Jucys--Murphy center}, or \emph{JM center}.  The main result of this paper is the following:

\begin{itheorem}\label{thm:A}
  Let $\Gamma$ be a finite graph without edge loops and let $\kk$ be any field of characteristic $p\geq 0$.
  \begin{enumerate}
    \item There is an algebra homomorphism
  \[\phi_{\kk}\colon H^*(\fM_{\Lambda,\alpha};\kk)\to Z(\Rn[\alpha](\kk))\]
  with $\phi_{\kk}(c_{i;j})=e_{i;j}$ for each $i\in I, 1\leq j\leq n$.  This map is an isomorphism onto the Jucys--Murphy center of $\Rn[\alpha](\kk)$.  In particular, it is always injective.
  \item The map $\phi_\kk$ is an isomorphism if and only if the JM center is the full center.  These equivalent conditions hold if and only if
  \[\dim H^*(\fM_{\Lambda,\alpha};\kk)\geq \dim Z(\Rn[\alpha](\kk)).
  \]
  \item If every connected component of $\Gamma$ is an $e$-cycle, for the same $e$, and either $p=0$, $p\nmid e$, or $p=e$, then $\phi_{\kk}$ is an isomorphism.
  \end{enumerate}
\end{itheorem}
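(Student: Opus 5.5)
The plan is to build $\phi_\kk$ from Kirwan surjectivity and then reduce everything to dimension counts. Let $P=\kk[c_{i;j}]$ be the polynomial ring in the tautological Chern classes. By McGerty--Nevins the Kirwan map $\kappa\colon P\to H^*(\fM_{\Lambda,\alpha};\kk)$ is surjective; it is surjective over $\Z$, and the cohomology of quiver varieties is torsion free and concentrated in even degrees, so surjectivity persists after base change to $\kk$. Let $\psi\colon P\to Z(\Rn[\alpha](\kk))$ send $c_{i;j}\mapsto e_{i;j}$. The image of $\psi$ is by definition the JM center. Part (1) then amounts to showing $\ker\kappa=\ker\psi$.

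For $\ker\kappa\subseteq\ker\psi$, I would realise $\Rn[\alpha]$ geometrically, via Webster's tensor product algebras or equivalently convolution on Lagrangian/Steinberg-type quiver flag varieties. The Chern classes of tautological bundles act centrally through the natural map from equivariant cohomology, and this action factors through $H^*(\fM_{\Lambda,\alpha})$ by restricting to the stable locus. Concretely, $Z(\Rn[\alpha])$ acts on the categorification and the classes act via the cohomology of the quiver variety. The cleanest route is a module $M$ (a sum of cohomologies of fibres, or the Grothendieck-group-compatible geometric module) that is faithful for $\Rn[\alpha]$ and on which $H^*(\fM)$ acts compatibly, with $e_{i;j}$ acting as $c_{i;j}$. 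Faithfulness then forces $\psi$ to factor through $\kappa$.

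For the reverse inclusion, i.e.\ injectivity of $\phi_\kk$, I would use that $H^*(\fM;\kk)$ is Frobenius (Poincaré duality, via compact core and torsion freeness) with one-dimensional socle in top degree, spanned by the class of a point. It then suffices to show the top-degree class maps to a nonzero element. I would verify this by evaluating against the symmetrizing trace of the cyclotomic KLR algebra, which is a graded Frobenius algebra of degree $2d_{\Lambda,\alpha}=\dim_\R\fM$, and checking that the trace of $\phi(\mathrm{pt})$ is a nonzero Euler-characteristic-type count. This step is the main obstacle: it requires matching the KLR trace with geometric integration, probably through a degeneration or torus-localization argument that works over $\Z$.

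Part (2) then follows formally. The map is injective with image the JM center, so it is an isomorphism iff JM center $=Z$ iff $\dim H^*\geq\dim Z$. For (3), in type $A^{(1)}_{e-1}$ I would bound $\dim Z(\Rn[\alpha](\kk))$ above by the dimension of the cocenter $\Rn/[\Rn,\Rn]$, using symmetry of the algebra. I would bound this in turn by the number of multipartitions in the block, by producing a spanning set of the cocenter indexed by multipartitions (conjugacy-class style elements of the Hecke algebra, as in cocenter results for $G(r,1,n)$). The number of multipartitions equals $\dim H^*(\fM_{\Lambda,\alpha})$ by torus fixed points. The case restrictions $p=0$, $p\nmid e$, $p=e$ enter because the Brundan--Kleshchev identification with a cyclotomic Hecke algebra with $q$ of order $e$ needs such a field. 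Together these give $\dim Z\leq\dim H^*$.
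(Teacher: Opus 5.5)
Your construction of $\phi_\kk$ from Kirwan surjectivity and your handling of part (2) match the paper. The injectivity step, however, rests on a false structural claim, and injectivity is what part (1) needs and what makes part (3) work in positive characteristic.

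$H^*(\fM_{\Lambda,\alpha};\kk)$ is not a Frobenius algebra with one-dimensional socle spanned by a point class.

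\begin{itemize}
\item The variety $\fM_{\Lambda,\alpha}$ is non-compact and retracts onto the core $\fL_{\Lambda,\alpha}$, which is Lagrangian, singular and usually reducible.
\item Poincar\'e duality pairs $H^*(\fM_{\Lambda,\alpha})$ with compactly supported cohomology, equivalently with $H_*(\fL_{\Lambda,\alpha})$, not with itself.
\item Cohomology vanishes above degree $\dim_\C\fM_{\Lambda,\alpha}$. So the Poincar\'e dual of a point is zero in $H^*(\fM_{\Lambda,\alpha})$ whenever $\dim\fM_{\Lambda,\alpha}>0$.
\item The top nonzero degree $H^{\dim_\C\fM_{\Lambda,\alpha}}$ lies in the socle. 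Its dimension is the number of irreducible components of $\fL_{\Lambda,\alpha}$, i.e.\ the weight multiplicity $\dim V(\Lambda)_{\Lambda-\alpha}$.
\end{itemize}

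A concrete counterexample: take the $e$-cycle with $\Lambda=\Lambda_0$, $\alpha=\delta$ and $e\geq 3$. Then $\fM_{\Lambda,\alpha}$ is the minimal resolution of $\C^2/(\Z/e)$, and $H^*=\kk\oplus\kk^{e-1}$ with zero product on $H^2$. The socle is $(e-1)$-dimensional.

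The salvageable form of your idea is that $\ker\phi_\kk$ is an ideal of an Artinian local ring and hence meets the socle. That only reduces the problem to injectivity on a socle of unknown size, which is no easier. No trace computation on a single class can settle it.

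\textbf{The missing idea.} You need more structure than the ring structure of a single $H^*(\fM_{\Lambda,\alpha})$. The kernel is also stable under raising operators that move between different $\alpha$. For that structure, the relevant ``socle'' is the one-dimensional highest-weight space $H^*(\fM_{\Lambda,0})=\kk$. The paper proceeds in four steps:
\begin{enumerate}
\item It defines integral decorated divided-power Hecke correspondences $E^{(n)}_k[f]$ and $F^{(n)}_k[f]$. Divided powers are essential: in characteristic $p$, products of the $E_{k,r}$ only give $n!\,E^{(n)}_k[f]$.
\item It proves (Theorem~\ref{thm:cyclic}) that the Chow groups of the cores are spanned over $\Z$ by the $F^{(n)}_k[f]$ applied to $[\fL(0,\Bw)]$. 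The proof uses the $\varepsilon_k$-stratification, the Grassmann-bundle structure of the strata, and the localization sequence.
\item It dualizes via the perfect pairing $H^*(\fM;\kk)\otimes H_*(\fL;\kk)\to\kk$. The result is that any nonzero $E$-stable subspace of $\bigoplus_\alpha H^*(\fM_{\Lambda,\alpha};\kk)$ contains $1\in H^0(\fM_{\Lambda,0})$.
\item It checks that $\phi$ intertwines $E^{(n)}_k[f]$, up to sign, with $\mathsf{E}^{(n)}_k[f]\star-$ on the center.
\end{enumerate}
Since $\phi_\kk(1)=1$, the kernel vanishes.

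\textbf{Existence of $\phi_\kk$.} You would need a faithful $\Rn[\alpha](\kk)$-module carrying a compatible $H^*(\fM_{\Lambda,\alpha};\kk)$-action. You do not construct one, and it is not an available object in characteristic $p$. The paper instead uses the characteristic-zero map of \cite{Webcenter} together with torsion-freeness of $H^*(\fM;\Z)$ and $\mathsf{R}^\Lambda$. This gives $\ker\kappa_g\subseteq\ker\kappa_a$ over $\Z$, and then it base-changes.

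\textbf{Part (3).} The known dimension formula (Hu--Shi, and Brundan's degenerate analogue) is for the whole algebra, not block by block. The blockwise statement follows by summing the injective maps over $\alpha\in Q^+_n$. Also, the case $p=e$ uses the degenerate cyclotomic Hecke algebra, since no $q$ of order $p$ exists in characteristic $p$.
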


The conditions on $p$ in (3) might look a little strange. These are
exactly the characteristics in which the required dimension of the
center is known.  If $p=0$ or $p\nmid e$, the cyclotomic KLR algebra is
a block of a non-degenerate cyclotomic Hecke algebra, to which we apply
the dimension count of \cref{thm:HS}. If $p=e$, we use the analogous count for a degenerate cyclotomic Hecke algebra due to Brundan \cite{Brundan:degenCentre}.  This gives the upper bound required in part (2).

Let $\Hn$ be the cyclotomic Hecke algebra (Ariki--Koike algebra) of type
$G(r,1,n)$ over a commutative ring $\kk$ with Hecke parameter $q\neq 1$
and cyclotomic parameters $\bQ=(Q_1,\dots,Q_r)\in(\kk^\times)^r$; see
\cref{def:Hecke}. It is a well-known result of Jucys~\cite[(8)]{Jucys}
and Murphy~\cite[Theorem~1.9]{Murphy:Nakayama} that the center of the
group algebra of the symmetric group is the set of symmetric polynomials
in the Jucys--Murphy elements of this algebra.
Brundan~\cite[Theorem~1]{Brundan:degenCentre} proved the corresponding
result for the degenerate Ariki--Koike algebras. Hence, it is natural to
make the following conjecture, which was part of the folklore long
before it first appeared in the literature in~\cite{McGerty12}.

\begin{conjecture}[Center conjecture~{\cite[Conjecture~3.1]{McGerty12}}]
\label{conj:Hecke-centre}
Suppose that $q-1\in\kk^\times$. Then the center of $\Hn$ is the set of
symmetric polynomials in the Jucys--Murphy elements $L_1,\dots,L_n$.
\end{conjecture}

Equivalently, \cref{conj:Hecke-centre} says that the center of the
affine Hecke algebra of type~$A$ surjects onto the center of the
Ariki--Koike algebras. In the semisimple case,
\cref{conj:Hecke-centre} was proved by Ariki and Koike
\cite[Theorem~3.20]{ArikiKoike94}. McGerty~\cite[Theorem~2.5]{McGerty12}
established the conjecture when $n=2$. When $r=1$, the conjecture was
proved by Francis and Graham~\cite{FG}. Importantly,
Ariki~\cite[\S2, Example]{Ariki:can} gives an example to show that the
assumption $q\ne1$ in \cref{conj:Hecke-centre} is necessary.

Together with the isomorphism of Brundan and Kleshchev \cite{BKKL}, \cref{thm:A}(3) gives the \emph{center conjecture} for the Ariki--Koike algebras:

\begin{itheorem}\label{thm:B}
  Suppose that $\kk$ is a field and $q\ne1$. Then the center of $\Hn$ is the algebra of symmetric polynomials in its Jucys--Murphy elements $L_1,\dots,L_n$.
\end{itheorem}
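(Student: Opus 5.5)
We deduce \cref{thm:B} from \cref{thm:A}(3) via the Brundan--Kleshchev isomorphism \cite{BKKL}, after first reducing to the case where all parameters lie in a single $q$-orbit.

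\textbf{Step 1: reduction to a single orbit.} Since $\kk$ is a field, we may enlarge $\kk$ if necessary. The center of a finite-dimensional algebra is compatible with field extension, and symmetric polynomials in the $L_a$ span a subspace that is also compatible with field extension. So we may assume $\kk$ is algebraically closed.

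Partition the parameters $Q_1,\dots,Q_r$ into classes, where two parameters are equivalent if their ratio lies in $q^{\Z}$. By the Dipper--Mathas Morita theorem, $\Hn$ is Morita equivalent to a direct sum of tensor products of smaller Ariki--Koike algebras, one for each class. In this reduction the JM elements correspond compatibly, so symmetric polynomials correspond as well. Rather than track this Morita equivalence, it is cleaner to work block by block.

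\textbf{Step 2: block decomposition and generalised eigenspaces.} The generalized eigenvalues of the $L_a$ decompose $\Hn=\bigoplus_\beta \Hn e_\beta$ into blocks, where the idempotents $e_\beta$ are central. The blocks are indexed by residue multisets, and each $e_\beta$ is itself a polynomial in the $L_a$, obtained from the Chinese remainder theorem applied to symmetric functions of the $L_a$. Hence $e_\beta$ is a symmetric polynomial in the $L_a$. Consequently, $Z(\Hn)=\bigoplus_\beta Z(\Hn e_\beta)$, and it suffices to show that each $Z(\Hn e_\beta)$ is generated by symmetric polynomials in the $L_a e_\beta$.

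\textbf{Step 3: transfer to the KLR algebra.} By \cite{BKKL}, when $q\ne1$ we have $\Hn e_\beta\cong \Rn[\alpha](\kk)$. Here the quiver $\Gamma$ has vertex set $I=\{Q_iq^k\}$, a disjoint union of $e$-cycles or lines, where $e$ is the order of $q$ (with $e=\infty$ giving type $A_\infty$). Under this isomorphism, the element $y_a e(\nu)$ corresponds to a power series in $L_a-\nu_a$ on $e(\nu)$. In particular, $e_{i;j}$ is expressed via $\sum_\nu e(\nu)\,e_j\bigl(1-\nu_a^{-1}L_a : \nu_a=i\bigr)$, using nilpotence to truncate the series.

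Conversely, each such element is a polynomial in the $L_a$ and the $e(\nu)$, symmetrised over the positions $a$ with the same residue. The key point is that the sum over $\nu$ in a fixed $\mathfrak S_n$-orbit of such expressions is a symmetric polynomial in $L_1,\dots,L_n$ times $e_\beta$. One sees this because the sums $\sum_\nu e(\nu) f(L)$ with $f$ symmetric in same-residue variables are the images of the center of the affine Hecke algebra, by Lusztig's description of that center under completion. Thus the JM center of $\Rn[\alpha]$ equals the image of the symmetric polynomials in the $L_a$.

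\textbf{Step 4: apply \cref{thm:A}(3).} The characteristic conditions in \cref{thm:A}(3) hold because $e$ is the multiplicative order of $q\neq1$ in $\kk$: either $p\nmid e$, or $e=\infty$. Components of type $A_\infty$ are covered by the finite-type results \cite{Webcenter} or \cite{Brundan:degenCentre}, or alternatively by embedding them into a large $e'$-cycle. Hence the JM center is the full center of each block, which proves \cref{thm:B}.

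We expect the main obstacle to be Step 3: checking carefully that the Brundan--Kleshchev isomorphism carries the JM center exactly onto the image of the symmetric polynomials in the $L_a$ (the two inclusions), and handling mixed line and cycle components uniformly.
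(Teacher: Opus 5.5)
Your proposal is correct in outline, but it follows a somewhat different route from the paper's, and one of your two fixes for the infinite-order case does not work.

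\emph{Identifying the JM centers.} The paper uses explicit Lagrange interpolation. It takes polynomials $p_i(T)\equiv\delta_{ic}\pmod{(T-c)^N}$ and sets $D_i(T)=(T-i)p_i(T)$. Then $1_\alpha$ is a sum of products $p_{i_1}(L_1)\cdots p_{i_n}(L_n)$, and the image of $i^je_{i;j}$ is $e_j(D_i(L_1),\dots,D_i(L_n))1_\alpha$, which is visibly symmetric in the $L_a$. Your argument via completions of the center of the affine Hecke algebra proves the same inclusion, and it is valid. Invariants commute with the flat completion, so an $\Sym_n$-compatible family $f_\nu$, symmetric in same-residue variables, agrees with a genuine symmetric polynomial modulo a power of the maximal ideal. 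It is heavier, though, and needs to be spelled out. Only the inclusion of the KLR JM center into $\kk[L_1,\dots,L_n]^{\Sym_n}1_\alpha$ is needed; the reverse is automatic.

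\emph{The dimension step.} The paper does not use \cref{thm:A}(3) here. It combines the injectivity in \cref{thm:A}(1) with \cref{lem:dim-H} to get $\dim Z(\Hn[\alpha])^{\JM}=|\Parts[\alpha]|$ for every $\alpha$. It then compares the sum over $\alpha$ with \cref{thm:HS} applied to $\Hn$ itself. Since \cref{thm:HS} holds for every $q\neq1$ and \cref{lem:dim-H} already covers $A_\infty$ components, this treats $q$ of finite and infinite order uniformly.

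\emph{The infinite-order case in your route.} Quoting \cref{thm:A}(3) forces a separate argument when $q$ has infinite order, and your first suggestion fails in positive characteristic. \cite{Webcenter} is a characteristic-zero result. Brundan's degenerate algebras in characteristic $p$ produce $p$-cycles, not $A_\infty$. Centers do not commute with reduction modulo $p$. So a case such as $\kk=\overline{\F_p(t)}$, $q=t$ is not covered by that option.

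Your second suggestion, embedding into a large $e'$-cycle, does work, provided you do two things:
\begin{enumerate}
\item Choose $e'$ with $p\nmid e'$, so that \cref{thm:A}(3) applies, and large enough that the finite support of $\alpha$ embeds in the $e'$-cycle without creating new edges.
\item Observe that $\Rn[\alpha]$, its elements $e_{i;j}$, and the quiver variety depend only on the full subgraph on the support of $\alpha$ and on $\Lambda$ there.
\end{enumerate}

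\emph{Minor points.} The Dipper--Mathas discussion in Step~1 is unnecessary. Calling the summands ``blocks'' in Step~2 would presuppose the Lyle--Mathas classification, which the paper deliberately avoids. All you need is the central idempotents $e_\beta$, which you correctly show are symmetric polynomials in the $L_a$, together with the several-orbit Brundan--Kleshchev isomorphism \cite{WebBKnote,Ros}.
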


Since \cref{thm:B} holds in every characteristic, we can apply \cref{thm:B} to prove a generalization of a conjecture of Krylov, Paegelow and Shlykov \cite{KrylovPaegelowShlykov26} that relates the equivariant $K$-theory of the Gieseker variety $\mathcal{G}(n,r)$, the Nakajima quiver variety of the Jordan quiver, and of its core $\Lrn$ to the center and cocenter of the generic cyclotomic Hecke algebra. Write $\SAc=\Z[q^{\pm1},Q_1^{\pm1},\dots,Q_r^{\pm1},(q-1)^{-1}]$; see \cref{SS:KTheory} for the unexplained notation below.

\begin{samepage}
\begin{itheorem}\label{thm:C}\hfill
  \begin{enumerate}
    \item Let $\kk$ be a commutative domain with $q,Q_1,\dots,Q_r,q-1\in
    \kk^\times$.  Then \[Z(\Hn(\kk))=Z(\Hn(\kk))^{\JM}.\] In particular, this holds over $\SAc$.
    \item There are compatible isomorphisms:
    \[\SAc\otimes\KT(\mathcal{G}(n,r))\cong Z(\Hn(\SAc))\qquad\text{and}\qquad\SAc\otimes\KT(\Lrn)\cong\Tr(\Hn(\SAc)).\]
    \item Let $e\geq 2$ and $\kk$ be a field of characteristic coprime to $e$ and $\zeta\in\kk$ be a primitive $e$th root of unity. Let $\kappa_1,\dots,\kappa_r\in\Z$, and let $\Lambda=\sum_m\Lambda_{\bar\kappa_m}$ for the quiver of type $A^{(1)}_{e-1}$.  For every $\alpha\in Q^+_n$, there is an isomorphism of $\kk$-algebras $K(\NLa;\kk)\cong Z(\HL[\alpha](\kk))$ between the $K$-theory of the quiver variety and the center of the corresponding block of the cyclotomic Hecke algebra over $\kk$ with $q=\zeta$ and $Q_m=\zeta^{\kappa_m}$.
  \end{enumerate}
\end{itheorem}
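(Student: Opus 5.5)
We prove Theorem~C in three steps: reduce (1) to Theorem~B over the fraction field and its residue fields, then deduce (2) and (3) from (1) together with the cohomology/K-theory comparison behind Theorem~A.

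\textbf{Step 1: part (1).} Let $\kk$ be a commutative domain with $q,Q_m,q-1$ invertible, and let $F$ be its fraction field. The algebra $\Hn(\kk)$ is free over $\kk$ with the standard basis. By Theorem~B, $Z(\Hn(F))$ is spanned by symmetric polynomials in $L_1,\dots,L_n$. Taking a $\kk$-basis of $\Hn(\kk)$ and clearing denominators, every $z\in Z(\Hn(\kk))$ satisfies $dz\in Z(\Hn(\kk))^{\JM}$ for some nonzero $d\in\kk$. We must show we may take $d=1$, that is, that $Z^{\JM}$ is saturated in $\Hn(\kk)$.

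\begin{itemize}
\item \emph{Universal spanning set.} Over $\SAc$, the symmetric polynomials in the $L_a$ are spanned by finitely many elements, for example $e_\lambda(L)$ with bounded exponents, since the $L_a$ satisfy the cyclotomic relation. Choose a subset $\{z_b\}$ whose specializations give a basis of $Z(\Hn(F))$.
\item \emph{Constant dimension.} The dimension of $Z(\Hn(K))$ over any field $K$ receiving $\SAc$ is independent of $K$. This follows from the dimension count used in Theorem~A(3): the center dimension equals the number of multipartitions, or via $\dim \Tr=\dim Z$ for symmetric algebras.
\item \emph{Residue fields.} For a maximal ideal $\mathfrak m\subset\kk$, Theorem~B over $\kk/\mathfrak m$ shows the images of $Z^{\JM}$ span $Z(\Hn(\kk/\mathfrak m))$, which has the same dimension. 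Hence the $z_b$ remain linearly independent modulo $\mathfrak m$.
\end{itemize}

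So the $\kk$-span of the $z_b$ is a pure submodule, locally at each maximal ideal, and contains $Z(\Hn(\kk))$. For a general domain we pass to the local rings $\kk_{\mathfrak m}$: purity there follows because the reduction mod $\mathfrak m$ is injective, a Nakayama-type argument for free modules. Purity is then glued over all $\mathfrak m$.

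\textbf{Main obstacle.} The hard part of Step 1 is making the basis choice uniform. I would choose $z_b$ over $\SAc$ and argue that the Gram-type minor certifying independence is a unit in $\SAc$. This holds because it is nonzero at every specialization to a field, which is exactly the constant-dimension statement combined with Theorem~B. Any $\kk$ as in (1) then receives a map from $\SAc$, and the result base-changes.

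\textbf{Step 2: part (2).} The equivariant K-theory $\KT(\mathcal G(n,r))$ is generated by tautological classes, by Kirwan surjectivity in K-theory. Sending these classes to symmetric functions of the $L_a$ gives a map to $Z(\Hn(\SAc))$ that is surjective by (1). Injectivity follows by localization to fixed points, which are indexed by multipartitions. After inverting enough, this matches the semisimple center via Ariki--Koike, and the K-theory is free. The cocenter statement follows by the symmetrizing-form duality $\Tr\cong Z^*$, matched with the pairing between $\KT(\mathcal G)$ and $\KT(\Lrn)$. Compatibility holds because both pairings become the same localization formula at the fixed points.

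\textbf{Step 3: part (3).} With $q=\zeta$, specialize the integral statement. The Chern character or Riemann--Roch comparison, valid when the characteristic is coprime to $e$ and the torus acts with $\zeta$, identifies $K(\NLa;\kk)$ with $H^*(\NLa;\kk)$ as filtered algebras. Combined with Theorem~A(3) and the Brundan--Kleshchev isomorphism this gives the result; alternatively, specialize (2) restricted to the block.
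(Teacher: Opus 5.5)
\textbf{Step 1 has a gap.} The problem is the sentence ``Hence the $z_b$ remain linearly independent modulo $\mathfrak m$.'' You chose the $z_b$ only to be an $F$-basis, and that says nothing about their reductions. For instance, replacing $z_1$ by $dz_1$ with $0\neq d\in\mathfrak m$ gives another $F$-basis inside $Z(\Hn(\kk))^{\JM}$ whose reduction mod $\mathfrak m$ is dependent. This happens even though all of $Z(\Hn(\kk))^{\JM}$ still spans $Z(\Hn(\kk/\mathfrak m))$.

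What you actually need is $\dim_{\kk/\mathfrak m}\bigl(Z(\Hn(\kk))^{\JM}\otimes\kk/\mathfrak m\bigr)\le|\Parts|$, i.e.\ that $Z(\Hn(\kk))^{\JM}$ is locally generated by $|\Parts|$ elements. That is equivalent to the saturation you are trying to prove. The ``unit Gram minor over $\SAc$'' has the same circularity: a fibrewise dimension count does not make one fixed minor nonvanishing at every point. Likewise, ``the result base-changes'' from $\SAc$ to $\kk$ would need centers to commute with base change.

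The paper supplies the missing input from \cite[Th.~1.4(2)]{huCocenterCyclotomic2022}: $Z(\Hn(R))$ is free of rank $|\Parts|$, and $F\otimes_R Z(\Hn(R))\cong Z(\Hn(F))$ for every field $F$ over $R$. Then \cref{thm:B} gives $F\otimes_R\bigl(Z(\Hn(R))/Z(\Hn(R))^{\JM}\bigr)=0$ at each residue field, and Nakayama finishes.

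You can also repair your own saturation route without that theorem. Use a full finite spanning set of $Z(\Hn(\kk))^{\JM}$ (bounded monomials in the $e_j(L)$, by Cayley--Hamilton) rather than a chosen subset. By \cref{thm:B} and \cref{thm:HS}, its matrix has rank exactly $|\Parts|$ over $F$ and over every residue field. So the $|\Parts|$-minors generate the unit ideal and the larger minors vanish, and Fitting ideals make $\Hn(\kk)/Z(\Hn(\kk))^{\JM}$ projective, hence torsion-free.

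\textbf{A smaller point in Step 2.} Over a ring, a symmetrizing form only gives $Z\cong\Tr^*$. To dualize to $\Tr\cong Z^*$ you need $\Tr(\Hn(\SAc))$ to be free, which the paper takes from \cite[Th.~1.4(1)]{huCocenterCyclotomic2022}.

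\textbf{Step 3 fails in positive characteristic.} The Chern character needs the denominators $k!$, so there is no Riemann--Roch identification $K(\NLa;\kk)\cong H^*(\NLa;\kk)$ when $\operatorname{char}\kk=p>0$. The hypothesis $p\nmid e$ has nothing to do with those denominators. Integrally you only get a filtration on $K$ whose associated graded is related to cohomology, and a filtered algebra need not be isomorphic to its associated graded. So this route can reach only characteristic $0$, via $\mathrm{ch}$, \cref{thm:A}(3) and \cref{thm:BK}.

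Your fallback ``specialize (2) restricted to the block'' is the paper's route, but it leaves out everything that makes it work. Part (2) concerns the $\TC$-equivariant $K$-theory of the Gieseker variety, so to reach $K(\NLa)$ one must:
\begin{enumerate}
\item specialize at $g=(\zeta,\zeta^{\kappa_1},\dots,\zeta^{\kappa_r})\in\TC$;
\item use that the components of $\mathcal G(n,r)^g$ are the cyclic quiver varieties $\NLa$ (Paegelow), together with the map $\mathrm{sp}_g$ built from the eigenbundle splitting;
\item match $1_\alpha$ with the component $\NLa$ using $L_ae(\mathbf i)=\zeta^{i_a}(1+y_a)e(\mathbf i)$;
\item get surjectivity from \cref{lem:Kirwan-K}, including inverses of the $[\Lambda^{v_c}V_c]$, and injectivity from the dimension $|\Parts[\alpha]|$;
\item descend to $\kk$ through lattices over $\mathscr O=\Z[\zeta_\C,1/e]$.
\end{enumerate}
In the last step the block idempotents are defined over $\mathscr O$ because the differences $\zeta_\C^c-\zeta_\C^{c'}$ are units there. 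That is where $p\nmid e$ genuinely enters.
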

\end{samepage}

For $\kk=\C$, part (3) is \cite[Th.~5.4]{KrylovPaegelowShlykov26}, which was proved assuming \cite[Conj.~4.6]{KrylovPaegelowShlykov26}; part (3) removes this assumption.  The matching of the components of the fixed locus with blocks of the Hecke algebra follows Paegelow \cite[Th.~2.15, Prop.~2.17 and Prop.~5.3]{Paegelow26}.

The key new ingredient is injectivity over an arbitrary field of the map from cohomology to $Z(\Rn[\alpha])$; this means we can rule out the existence of non-JM classes in the center if we can bound the dimension from above.  This follows for the same fundamental reason as the characteristic-zero injectivity in \cite{Webcenter}: $\phi$ is not just a map of rings, but of representations of a larger algebra.  In \cref{thm:cyclic}, we extend a theorem of Nakajima \cite[Prop.~12.3.2, 13.3.1]{nakajimaQuiverVarieties2001a} to guarantee that $H^*(\fM_{\Lambda})=\bigoplus_\alpha H^*(\fM_{\Lambda,\alpha})$ is cogenerated by $H^*(\fM_{\Lambda,0})\cong \kk$.
This is dual to the more geometrically natural statement that
the homology of the core $\fL_{\Lambda,\alpha}=\pi^{-1}(0)\subset \fM_{\Lambda,\alpha}$ is generated by $H_*(\fM_{\Lambda,0})\cong \kk$.

Thus, some explanation is needed about how our results here are different from \cite{Webcenter}:
\begin{itemize}
  \item Since \cite{Webcenter} was written when Kirwan surjectivity was not known, it used the interpretation of $H^*(\fM_{\Lambda,\alpha})$ as the Hochschild cohomology of a category of sheaves over a quantization of the structure sheaf of $\fM_{\Lambda,\alpha}$.  This constructed $\phi$ in characteristic $0$, even on classes outside the image of the Kirwan map.

Given that we now know Kirwan surjectivity holds, we can more straightforwardly define an integral map directly on the Chern-class generators; we use the characteristic-zero map of \cite{Webcenter} to verify that their relations are preserved.
  \item For reduction to positive characteristic, we need a more careful analysis of the geometry of quiver varieties and Hecke correspondences, resulting in the stronger integral statement proved in \cref{thm:cyclic}: the Chow groups of the cores are generated over $\Z$ by decorated divided-power correspondences.  We also identify the integral form of the loop algebra generated by these divided powers and compare its geometric and KLR actions.
  \item A few simplifications made in that paper exclude the case of $e=2$; our arguments here cover all $e>1$.
\end{itemize}

The proof of \cref{thm:A}(1)--(2) has three steps.  First, in \cref{sec:quiver}, we define decorated divided-power Hecke correspondences on the Chow groups of the cores of quiver varieties and prove integral cyclicity (\cref{thm:cyclic}).  Next, in \cref{sec:comparison}, we transpose these to raising and lowering operators on integral cohomology, identify the integral form of the loop algebra they generate, and compare its action with the corresponding operators on the KLR side.  Finally, \cref{sec:injective} uses Kirwan surjectivity to construct the integral comparison map and cyclicity to prove its injectivity after arbitrary base change.  Part (3) and \cref{thm:B} are proved in \cref{sec:affineA}. \cref{thm:C} is proved in \cref{SS:KTheory}.

\section{Quiver varieties and Chow groups}
\label{sec:quiver}

\subsection{Notation}\label{sec:notation}
We follow the conventions of \cite{nakajimaQuiverVarieties2001a}.  Fix $\Gamma=(I,E)$ as above, together with dimension vectors $\Bv=(v_i)_{i\in I},\Bw=(w_i)_{i\in I}\in \Z_{\geq 0}^I$.  For each $k\in I$, let $\Be_k\in \Z_{\geq 0}^I$ denote the $k$th coordinate vector.  Corresponding to these dimension vectors, let $\fM(\Bv,\Bw)$ and $\fM_0(\Bv,\Bw)$ be the quiver varieties as defined in \cite[(2.2.2),(2.2.1)]{nakajimaQuiverVarieties2001a}, $\pi\colon\fM(\Bv,\Bw)\to \fM_0(\Bv,\Bw)$ be the affinization map. We have the dominant weight $\Lambda=\sum_i w_i\Lambda_i$, the element $\alpha=\sum_i v_i\al_i\in Q^+$, and the weight $\mu=\Lambda-\alpha$, and we write
\[\fM(\Bv,\Bw)=\fM_{\Lambda,\alpha},\quad \fL(\Bv,\Bw)=\pi^{-1}(0)=\fL_{\Lambda,\alpha},\quad
\fM_{\Lambda}=\bigsqcup_{\alpha\in Q^+}\fM_{\Lambda,\alpha}, \quad \fL_{\Lambda}=\bigsqcup_{\alpha\in Q^+}\fL_{\Lambda,\alpha}.
\]
For $n\geq 0$, we write \[\fM_{\Lambda,n}=\bigsqcup_{\alpha\in Q^+_n}\fM_{\Lambda,\alpha}.\] Here $Q^+_n$ is the set of $\alpha=\sum_iv_i\al_i\in Q^+$ of height $\sum_iv_i=n$.  This means, in particular, that \[H^*(\fM_{\Lambda})=\bigoplus_\alpha H^*(\fM_{\Lambda,\alpha})=\bigoplus_n H^*(\fM_{\Lambda,n}),\] and similarly for $\fL_{\Lambda}$.

Throughout, we will consider varieties $X$ over $\C$ and let $A_*(X)$ denote the Chow group of $X$ with integer coefficients, and use $H_*(-)$ (resp., $H_*^{\rm{ord}}(-)$) to denote the Borel--Moore homology (resp., the ordinary singular homology) with integer coefficients unless another ring is indicated.  For compact spaces, such as $\fL(\Bv,\Bw)$, Borel--Moore homology $H_*(-)$ agrees with ordinary singular homology $H_*^{\rm{ord}}(-)$.

\begin{lemma}\label{lem:pairing}\hfill
 \begin{enumerate}
\item $\fM(\Bv,\Bw)$ is a smooth quasi-projective variety, and $\fL(\Bv,\Bw)$ is a projective subvariety onto which $\fM(\Bv,\Bw)$ retracts \cite[Prop.~2.3.8]{nakajimaQuiverVarieties2001a}.
\item The varieties $\fM(\Bv,\Bw)$ and $\fL(\Bv,\Bw)$ have property (S) of \cite[\S 7.1]{nakajimaQuiverVarieties2001a}: their odd homology vanishes, their even homology is a free abelian group, and the cycle map $\cl\colon A_*\to H_{2*}$ is an isomorphism \cite[Th.~7.3.5]{nakajimaQuiverVarieties2001a}.
\item For any field $\kk$, we have isomorphisms
\begin{equation}\label{eq:UCT}
H^*(\fM(\Bv,\Bw);\kk)=H^*(\fM(\Bv,\Bw);\Z)\otimes \kk, \qquad H_*(\fL(\Bv,\Bw);\Z)\cong H_*^{\rm{ord}}(\fM(\Bv,\Bw);\Z)
\end{equation}
and the induced pairing
  \begin{equation}\label{eq:pairing}
H^k(\fM(\Bv,\Bw);\kk)\otimes H_k(\fL(\Bv,\Bw);\kk)\to \kk,\qquad m\otimes c\mapsto \langle m|_{\fL(\Bv,\Bw)},c\rangle
\end{equation}
  is perfect.
\end{enumerate}
\end{lemma}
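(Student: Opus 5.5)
Parts (1) and (2) are citations to Nakajima, \cite[Prop.~2.3.8]{nakajimaQuiverVarieties2001a} and \cite[Th.~7.3.5]{nakajimaQuiverVarieties2001a}, so nothing new is needed there. The work is in deducing part (3) from them by standard algebraic topology.

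For the first isomorphism in \eqref{eq:UCT}, I would use the universal coefficient theorem. Since $\fM(\Bv,\Bw)$ retracts onto $\fL(\Bv,\Bw)$, the inclusion $\fL\hookrightarrow\fM$ is a homotopy equivalence. Nakajima's retraction comes from the $\C^\times$-flow $\lim_{t\to0}t\cdot x$, which is a deformation retraction. Hence
\[H_*^{\rm ord}(\fM;\Z)\cong H_*^{\rm ord}(\fL;\Z)=H_*(\fL;\Z),\]
where the last equality holds because $\fL$ is compact. This gives the second isomorphism. By (2), this group is free, finitely generated (since $\fL$ is a projective variety, hence a finite CW complex) and concentrated in even degrees.

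The universal coefficient theorem then gives
\[H^k(\fM;\kk)\cong \mathrm{Hom}(H_k^{\rm ord}(\fM;\Z),\kk)\oplus \mathrm{Ext}(H_{k-1}^{\rm ord}(\fM;\Z),\kk),\]
and the Ext term vanishes by freeness. Freeness also gives
\[\mathrm{Hom}(H_k,\kk)=\mathrm{Hom}(H_k,\Z)\otimes\kk=H^k(\fM;\Z)\otimes\kk,\]
where the last step is UCT over $\Z$ combined with finite generation. Similarly,
\[H_k(\fL;\kk)=H_k(\fL;\Z)\otimes\kk,\]
with no Tor terms.

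For the pairing \eqref{eq:pairing}, restriction $m\mapsto m|_{\fL}$ is an isomorphism $H^k(\fM;\kk)\to H^k(\fL;\kk)$ because of the homotopy equivalence. The pairing therefore becomes the Kronecker pairing
\[H^k(\fL;\kk)\otimes H_k(\fL;\kk)\to\kk.\]
By the same UCT argument, $H^k(\fL;\kk)=\mathrm{Hom}_\kk(H_k(\fL;\kk),\kk)$, and both sides are finite-dimensional. So the Kronecker pairing is the evaluation pairing between a finite-dimensional space and its dual, which is perfect.

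The only point needing care is the homotopy equivalence $\fL\simeq\fM$. It requires the retraction to be a genuine deformation retraction compatible with the topology, not just a set-theoretic retraction. I would cite Nakajima's proof, which uses the $\C^\times$-action with proper limit map. Alternatively, I would note that $\fM$ is the union of the open sets $\{x : \lim t\cdot x\in U\}$ for neighborhoods $U$ of $\fL$, and argue by a Morse/Białynicki-Birula-type argument. Everything else is formal.
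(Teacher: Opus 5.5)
Your argument is correct and follows the paper's proof. The retraction from part (1) and compactness of $\fL(\Bv,\Bw)$ give $H_*(\fL(\Bv,\Bw);\Z)\cong H_*^{\rm ord}(\fM(\Bv,\Bw);\Z)$; freeness from part (2) with the universal coefficient theorem then yields \eqref{eq:UCT} and the perfectness of \eqref{eq:pairing}.

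One caution on your side remark: the pointwise limit $x\mapsto\lim_{t\to0}t\cdot x$ is not itself the retraction. It lands in the $\C^\times$-fixed locus, in general a proper subset of $\fL(\Bv,\Bw)$, and need not be continuous. So take the homotopy equivalence directly from the cited part (1), as the paper does.
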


\begin{proof}
  Parts (1) and (2) are cited.  For (3), since the inclusion $\fL=\fL(\Bv,\Bw)\hookrightarrow \fM(\Bv,\Bw)$ is a deformation retract and $\fL(\Bv,\Bw)$ is compact, we have $H_*(\fL(\Bv,\Bw);\Z)\cong H_*^{\rm{ord}}(\fL(\Bv,\Bw);\Z)\cong H_*^{\rm{ord}}(\fM(\Bv,\Bw);\Z)$.  \cref{eq:UCT} follows from  (2) by the universal coefficient theorem.  \cref{eq:pairing} follows from the second isomorphism in \eqref{eq:UCT}.
\end{proof}

\subsection{Hecke correspondences}\label{sec:hecke-correspondences}

Following the conventions of Nakajima, we represent a point in $\fM(\Bv,\Bw)$ by a collection of matrices $x=[B,i,j]$, considered up to base change by the group $G_{\Bv}$.   We now fix a vertex $k\in I$ and an integer $n\geq 1$ with $v_k\geq n$.

We will make heavy use of the Hecke correspondence
\[\fP^{(n)}_k=\fP^{(n)}_k(\Bv,\Bw)\subset \fMo\times \fMt\]
of \cite[\S 5.3]{nakajimaQuiverVarieties2001a}.  If we regard $\fM(\Bv,\Bw)$ as the moduli space of stable framed representations of the preprojective algebra, then $\fP^{(n)}_k$ is the moduli space of pairs $V^1\subset V^2$ of a representation $V^2$ of dimension $\Bv$ and a subrepresentation $V^1$ of dimension $\Bv-n\Be_k$ containing the image of $i$ \cite[(5.3.1)]{nakajimaQuiverVarieties2001a}, with the projections $p_1$ and $p_2$ recording $V^1$ and $V^2$, respectively.  This is a closed subvariety such that the projections $p_1,p_2$ to both factors are proper; in fact, the fibers are always Grassmannians.

An important role in understanding these fibers is played by the complex of \cite[(2.9.1)]{nakajimaQuiverVarieties2001a}
\[V_k\xrightarrow{\ \sigma_k\ }\bigoplus_{\mathrm{in}(h)=k}V_{\mathrm{out}(h)}\oplus W_k\xrightarrow{\ \tau_k\ }V_k,\qquad \sigma_k=\bigoplus_{\mathrm{in}(h)=k}B_{\bar h}\oplus j_k,\qquad \tau_k=\sum_{\mathrm{in}(h)=k}\varepsilon(h)B_h+i_k,\]
where $\bar h$ is $h$ with the opposite orientation and $\varepsilon$ is the sign function of \cite[\S 2.1]{nakajimaQuiverVarieties2001a}.  Thus $\tau_k$ is the sum of all the maps into $V_k$ and $\sigma_k$ is the sum of all the maps out of it; the composition $\tau_k\sigma_k$ is the $k$th component of the moment map, so it vanishes, and stability implies that $\sigma_k$ is injective.
The fiber of $p_2$ over a point $V^2$ is the Grassmannian of subspaces of $V^2_k$ of dimension $v_k-n$ containing the image of $\tau_k$.  On the other hand, the fiber of $p_1$ over a point $V^1$ of $\fMo$ is the Grassmannian of $n$-dimensional subspaces of $\ker\tau_k/\operatorname{im}\sigma_k$, where now $\sigma_k$ and $\tau_k$ are the maps in the complex above for the representation $V^1$.  Indeed, since $V^1$ and $V^2$ differ only at the vertex $k$, the middle terms of their complexes coincide, and so do the kernels of $\tau_k$.  The map $\sigma_k$ of $V^2$ identifies $V^2_k$ with a subspace of $\ker\tau_k$ containing $\operatorname{im}(\sigma_k)$, of dimension $\dim\operatorname{im}(\sigma_k)+n$, and choosing $V^2\supset V^1$ amounts to choosing this subspace.

Let $\mathcal{S}$ be the tautological vector bundle of rank $n$ defined by $V_k^2/V_k^1$ on $\fP^{(n)}_k$, given by the quotient representation in the description discussed above.
Let $\SymR_n=\Z[x_1,\dots,x_n]^{\Sym_n}\cong\Z[c_1,\dots,c_n]$.  This ring acts on the Chow groups of $\fP^{(n)}_k$ (and of any scheme over it) by evaluating $c_j$ at $c_j(\mathcal{S})$ and capping with the resulting class.

\begin{definition}\label{def:F}
  For $f\in \SymR_n$, the \emph{decorated divided-power lowering operator} is the map
  \[F^{(n)}_k[f]\colon A_*(\fL(\Bv-n\Be_k,\Bw))\to A_*(\fL(\Bv,\Bw)),\qquad F^{(n)}_k[f](c)=q_*\,\delta^!\big(c\times (f\cap [\fP^{(n)}_k])\big).\]
  Here $\delta$ is the diagonal map of the smooth variety $\fMo$, a regular embedding, and $\delta^!$ is the refined Gysin map \cite[\S 6.2]{fultonIntersectionTheory1998} for the fiber square
  \[\begin{tikzcd}
  \fL(\Bv-n\Be_k,\Bw)\times_{\fMo}\fP^{(n)}_k \arrow[r]\arrow[d,"\mathrm{pr}_1"] & \fL(\Bv-n\Be_k,\Bw)\times \fP^{(n)}_k \arrow[d,"\mathrm{incl}\times p_1"]\\
  \fMo \arrow[r,"\delta"] & \fMo\times \fMo
  \end{tikzcd}\]
  and $q\colon \fL(\Bv-n\Be_k,\Bw)\times_{\fMo}\fP^{(n)}_k\to \fL(\Bv,\Bw)$ is the proper map induced by $p_2$; here $\mathrm{pr}_1$ and $p_1$ are the projections onto the first factor.

  Similarly, we have a decorated divided-power raising operator \[E^{(n)}_k[f]\colon A_*(\fL(\Bv,\Bw))\to A_*(\fL(\Bv-n\Be_k,\Bw))\] on Chow groups, defined by the same formula with the roles of $p_1$ and $p_2$ reversed.
\end{definition}
The same formula in Borel--Moore homology defines operators $F^{(n)}_k[f], E^{(n)}_k[f]$ on $H_*(\fL;\Z)$, namely convolution with the class $f\cap[\fP^{(n)}_k]$ in the sense of \cite[\S 2.7]{CG97}:
\begin{equation}\label{eq:pull-push0}
\begin{aligned}
    F^{(n)}_k[f]&\colon H_*(\fL(\Bv-n\Be_k,\Bw))\to H_*(\fL(\Bv,\Bw)), & F^{(n)}_k[f](c)&=(p_2)_*\bigl(p_1^*(c)\cap (f\cap  [\fP^{(n)}_k])\bigr),\\
    E^{(n)}_k[f]&\colon H_*(\fL(\Bv,\Bw))\to H_*(\fL(\Bv-n\Be_k,\Bw)), & E^{(n)}_k[f](c)&=(p_1)_*\bigl(p_2^*(c)\cap (f\cap  [\fP^{(n)}_k])\bigr).
\end{aligned}
\end{equation}
Since the cycle map commutes with exterior products, proper pushforward, refined Gysin maps and Chern classes \cite[Ch.~19]{fultonIntersectionTheory1998}, the cycle map $\cl\colon A_*(\fL)\to H_*(\fL)$ intertwines the two versions.

On cohomology, we define the corresponding operators by pull-push: for $f\in\SymR_n$, let
\begin{equation}\label{eq:pull-push}
\begin{aligned}
    F^{(n)}_k[f]&\colon H^*(\fMo)\to H^*(\fMt), & F^{(n)}_k[f](m)&=(p_2)_*\big(f(\mathcal{S})\cup p_1^*m\big),\\
    E^{(n)}_k[f]&\colon H^*(\fMt)\to H^*(\fMo), & E^{(n)}_k[f](m)&=(p_1)_*\big(f(\mathcal{S})\cup p_2^*m\big),
\end{aligned}
\end{equation}
where $(p_1)_*$ and $(p_2)_*$ are the Gysin maps of the proper maps $p_1,p_2$ between smooth varieties.  Under Poincar\'e duality on the smooth varieties $\fP^{(n)}_k$, $\fMo$ and $\fMt$, these are intertwined with the operators
of \cref{def:F} on the Borel--Moore homology of $\fM$.  We write $E_{k,r}=E^{(1)}_k[x^r]$ and $F_{k,r}=F^{(1)}_k[x^r]$.
\begin{lemma}\label{lem:adjoint}
  Under the perfect pairing of \cref{lem:pairing}, the operator $F^{(n)}_k[f]$ on $H_*(\fL)$ is adjoint to the operator $E^{(n)}_k[f]$ on $H^*(\fM)$ of \eqref{eq:pull-push}, and similarly with $E$ and $F$ reversed.
\end{lemma}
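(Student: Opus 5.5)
Let $m\in H^*(\fMt)$ and $c\in H_*(\fL(\Bv-n\Be_k,\Bw))$. We want
\[\langle E^{(n)}_k[f](m)|_{\fL(\Bv-n\Be_k,\Bw)},c\rangle=\langle m|_{\fL(\Bv,\Bw)},F^{(n)}_k[f](c)\rangle.\]
We will prove this by a projection-formula computation along the correspondence $\fP=\fP^{(n)}_k$. All pairings will be regarded as cap products followed by pushforward to a point. Since all spaces involved are compact on the support side (the relevant classes live on $\fL$ and on $p_1^{-1}(\fL)$, which is proper because $p_1,p_2$ are proper and $\pi$ is compatible with the Hecke correspondence, so $p_2(p_1^{-1}(\fL(\Bv-n\Be_k,\Bw)))\subset\fL(\Bv,\Bw)$), degree maps to a point are defined.

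The computation has three steps. We work with Borel--Moore homology supported on $Z:=p_1^{-1}(\fL(\Bv-n\Be_k,\Bw))$.

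\emph{Right-hand side.} By \eqref{eq:pull-push0}, $F^{(n)}_k[f](c)=(p_2)_*(p_1^*c\cap(f\cap[\fP]))$, where $p_1^*c\cap[\fP]$ is the refined pullback class supported on $Z$; concretely it is $\delta^!(c\times[\fP])$, as in \cref{def:F}. By the projection formula for the proper map $p_2|_Z$,
\[\langle m,F^{(n)}_k[f](c)\rangle=\deg\bigl(p_2^*m\cap f(\mathcal S)\cap \delta^!(c\times[\fP])\bigr).\]

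\emph{Left-hand side.} $E^{(n)}_k[f](m)=(p_1)_*(f(\mathcal S)\cup p_2^*m)$ is a Gysin pushforward, defined as $\mathrm{PD}^{-1}_{\fMo}\,(p_1)_*\bigl((f(\mathcal S)\cup p_2^*m)\cap[\fP]\bigr)$. Capping it with $c$ and taking the degree means intersecting the cycle $(p_1)_*(\cdots\cap[\fP])$ with $c$ inside the smooth variety $\fMo$, that is, applying $\delta^!$ to $c\times(p_1)_*(\cdots)$. Using the compatibility of refined Gysin maps with proper pushforward (Fulton Th.~6.2(a)) and with cap products by cohomology classes (Th.~6.2 and Prop.~6.3), this becomes
\[\deg\bigl(p_2^*m\cap f(\mathcal S)\cap\delta^!(c\times[\fP])\bigr),\]
which equals the right-hand side.

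\emph{Transfer to fields.} Everything above is integral and natural in the coefficients, so the identity holds over any field $\kk$. The pairing is perfect by \cref{lem:pairing}, so the identity identifies the adjoint. The statement with $E$ and $F$ reversed follows by the symmetric argument, swapping $p_1$ and $p_2$.

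The main obstacle is the left-hand side step. We must justify carefully that the Poincaré-duality definition of the Gysin map $(p_1)_*$ on cohomology matches the Borel--Moore convolution after restricting to $\fL$, including signs and degree shifts. The first tool to try is the identity $\mathrm{PD}(\alpha)\cap$-intersection$=\delta^!$ on a smooth ambient variety, as in Chriss--Ginzburg \cite[\S 2.6--2.7]{CG97}. Applied there, it gives exactly $\langle\beta|_{\fL},c\rangle=\deg\delta^!(c\times\mathrm{PD}(\beta))$ for $\beta\in H^*(\fMo)$.
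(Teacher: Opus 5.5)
Your proposal is correct and is essentially the paper's proof: both sides reduce to $\deg\bigl(q^*m\cap\delta^!(c\times\gamma)\bigr)$ with $\gamma=f\cap[\fP^{(n)}_k]$, using the projection formula for $q$ on one side and the compatibility of $\delta^!$ with cap products and proper pushforward on the other. One citation should be adjusted: since $m$ is an arbitrary cohomology class rather than a Chern class, the compatibilities you need are the Borel--Moore ones of Chriss--Ginzburg, which is what the paper cites, not Fulton's Chow-theoretic Th.~6.2 and Prop.~6.3.
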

\begin{proof}
  Let $c\in H_*(\fL(\Bv-n\Be_k,\Bw))$ and $m\in H^*(\fMt)$, and put $\gamma=f\cap[\fP^{(n)}_k]$.  The projection formula and the compatibility of refined pullback with cap products and proper pushforward in Borel--Moore homology \cite[\S\S 2.6--2.7]{CG97} give
  \begin{align*}
    \big\langle m,F^{(n)}_k[f](c)\big\rangle
    &=\deg\big(m\cap q_*\delta^!(c\times\gamma)\big)
    =\deg\big(q^*m\cap\delta^!(c\times\gamma)\big)\\
    &=\deg\,\delta^!\big(c\times(p_2^*m\cap\gamma)\big)
    =\big\langle (p_1)_*\big(f(\mathcal{S})\cup p_2^*m\big),c\big\rangle\\
    &=\big\langle E^{(n)}_k[f](m),c\big\rangle.
  \end{align*}
  The statement with $E$ and $F$ exchanged is the same computation with the two factors exchanged.
\end{proof}

\subsection{The stratification by \texorpdfstring{$\varepsilon_k$}{epsilon k}}\label{sec:epsilon-stratification}
As in the previous subsection, we keep $k\in I$ fixed.  For each point $x=[B,i,j]\in \fM(\Bv,\Bw) $, we consider the statistic $\varepsilon_k(x)=\operatorname{codim}_{V_k}\operatorname{Im}\tau_k$.  Following \cite[\S 5.4]{nakajimaQuiverVarieties2001a}, let
\[\fMf{k}{n}=\{x\in \fM(\Bv,\Bw)\mid \varepsilon_k(x)=n\},\qquad \fM_{k;\leq n}=\bigcup_{m\leq n}\fM_{k;m},\qquad \fM_{k;\geq n}=\bigcup_{m\geq n}\fM_{k;m},\]
and similarly $\fL_{k;n}=\fL\cap \fM_{k;n}$ and so on.  The subset $\fM_{k;\leq n}$ is open, so $\fM_{k;\geq n}$ is closed, and $\fM_{k;n}$ is closed in $\fM_{k;\leq n}$.
In particular, for an irreducible closed subvariety $Y\subset \fM(\Bv,\Bw)$, the function $\varepsilon_k$ takes its minimum value on a nonempty open subset of $Y$; we denote this generic value by $\varepsilon_k(Y)$.  Thus $Y\subset \fM_{k;\geq \varepsilon_k(Y)}$.

This filtration is a geometric avatar of decomposing a representation
according to the action of the  $\mathfrak{sl}_2$ generated by the root spaces for $\pm \al_k$, with the open subset $\fM_{k; 0}$ playing the role of the highest weight vectors.

  Write $\fMo^\circ=\fM_{k;0}(\Bv-n\Be_k,\Bw)$.   On $\fM_{k;n}(\Bv,\Bw)$, there is a unique subrepresentation of dimension $\Bv-n\Be_k$, so the map $p_2$ is injective on the preimage of this subset.
 Let $\mathcal{Q}$ be the vector bundle on $\fMo^\circ$ defined by the middle cohomology $\mathcal{Q}_k(\Bv-n\Be_k,\Bw)=\ker\tau_k/\operatorname{Im}\sigma_k$
   of the complex \cite[(2.9.1)]{nakajimaQuiverVarieties2001a}, a vector bundle of rank $m=\langle\al_k^\vee,\mu\rangle+2n$.
\begin{lemma}[{\cite[Prop.~5.4.3]{nakajimaQuiverVarieties2001a}}]\label{lem:grassmann-bundle}
	We have an isomorphism of $\fMf{k}{n}$ to the Grassmannian bundle $G(n,\mathcal{Q})\to \fMo^\circ$ of $n$-planes in $\mathcal{Q}$.  Under this isomorphism, $\fP^{(n)}_k\cap\big(\fMo\times \fM_{k;\leq n}(\Bv,\Bw)\big)$ is the graph of the bundle map.

\end{lemma}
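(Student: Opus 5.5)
The plan is to build the map $\fM_{k;n}(\Bv,\Bw)\to G(n,\mathcal{Q})$ directly from the linear algebra of the complex $V_k\xrightarrow{\sigma_k}\bigoplus V_{\mathrm{out}(h)}\oplus W_k\xrightarrow{\tau_k}V_k$, and then to check that it is an isomorphism and that it identifies the Hecke correspondence with its graph.

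\textbf{Constructing the map.} Take $x=[B,i,j]\in\fM_{k;n}(\Bv,\Bw)$. Then $\operatorname{Im}\tau_k\subset V_k$ has codimension exactly $n$. It is $B$-stable and contains the image of $i$, because every map into $V_k$ factors through $\tau_k$. So it gives a subrepresentation $V^1$ of dimension $\Bv-n\Be_k$, and any subrepresentation of that dimension containing $\operatorname{Im}i$ must contain $\operatorname{Im}\tau_k$, hence equal it. Stability of $V^1$ is inherited from $V^2$. By the surjectivity of its own $\tau_k$, the point $V^1$ lies in $\fMo^\circ$. The quotient $V^2_k$ embeds via $\sigma_k$ of $V^2$ as an $n$-dimensional subspace of $\ker\tau_k/\operatorname{im}\sigma_k$ for $V^1$, by the discussion before the lemma. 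This gives a point of $G(n,\mathcal{Q})$ over $[V^1]$. The assignment is compatible with the $G_{\Bv}$-action, so it descends to a morphism; to see this, one can work on the level of the stable locus of $\mu^{-1}(0)$ and use that all the constructions are algebraic in families of constant rank.

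\textbf{Constructing the inverse.} Given $V^1\in\fMo^\circ$ and an $n$-plane $U\subset\ker\tau_k/\operatorname{im}\sigma_k$, lift $U$ to $\widetilde U\subset\ker\tau_k$ containing $\operatorname{im}\sigma_k$. Set $V^2_k=\widetilde U$ and $V^2_{k'}=V^1_{k'}$ for $k'\ne k$. The maps out of $V^2_k$ are the coordinate projections of $\widetilde U\subset\bigoplus V_{\mathrm{out}(h)}\oplus W_k$. The maps into $V^2_k$ are $\sigma_k$ of $V^1$ followed by the inclusion, i.e.\ the original maps into $V^1_k\cong\operatorname{im}\sigma_k$; here we use that $\sigma_k$ is injective, so $V^1_k\cong\operatorname{im}\sigma_k$.

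We then check three things:
\begin{itemize}
\item The moment map vanishes at $k$ because $\widetilde U\subset\ker\tau_k$, and at the neighbouring vertices because the relevant compositions agree with those of $V^1$.
\item $V^2$ is stable.
\item $\varepsilon_k(V^2)=n$, since $\operatorname{Im}\tau_k^{V^2}=\operatorname{im}\sigma_k^{V^1}$ has codimension $n$ in $\widetilde U$.
\end{itemize}
These two constructions are mutually inverse up to the group actions, and both are algebraic. This follows Nakajima's reflection-functor style argument; alternatively we may simply cite \cite[Prop.~5.4.3]{nakajimaQuiverVarieties2001a}.

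\textbf{The graph statement.} A point of $\fP^{(n)}_k$ over $V^2\in\fM_{k;\le n}$ is a codimension-$n$ subspace of $V^2_k$ containing $\operatorname{Im}\tau_k$. This forces $\varepsilon_k(V^2)\ge n$, so $V^2\in\fM_{k;n}$, and the subspace must be $\operatorname{Im}\tau_k$ itself. Hence the fibre of $p_2$ over such a point is a single point, namely the image under the bundle map. Reduced-ness of the intersection would follow from $p_2$ being an isomorphism onto its image there, since the fibre Grassmannian is a reduced point.

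\textbf{Main obstacle.} The main difficulty is scheme-theoretic: showing the map is an isomorphism of varieties, not merely a bijection. I would handle this by doing the construction on the level of the stable locus of $\mu^{-1}(0)$ with a free $G_{\Bv}$-action, and exhibiting both maps as morphisms of principal bundles.
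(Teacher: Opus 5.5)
Your proposal is correct and is essentially the standard argument behind \cite[Prop.~5.4.3]{nakajimaQuiverVarieties2001a}, which the paper cites rather than reproves. The unique subrepresentation of dimension $\Bv-n\Be_k$ is $\operatorname{Im}\tau_k$, and the remaining datum is the $n$-plane $\sigma_k(V^2_k)/\operatorname{Im}\sigma_k$ in $\ker\tau_k/\operatorname{Im}\sigma_k$, exactly as in the paper's description of the fibres of $p_1$ and $p_2$ before the lemma. The steps you only assert are routine: stability of the reconstructed $V^2$ holds because a $B$-invariant $S\subset\ker j$ meets $V^1$ trivially and is then killed by the injective $\sigma_k$, and the scheme-theoretic upgrade via the free $G_{\Bv}$-action is standard; for the graph statement no separate reducedness argument is needed, since both sides are reduced closed subschemes of $\fMo\times\fM_{k;\leq n}(\Bv,\Bw)$ with the same support.
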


The same result holds with $\fM$ replaced by the core $\fL$, since $\fL_{k;n}(\Bv,\Bw)$ is the restriction of the Grassmann bundle to $\fL_{k;0}(\Bv-n\Be_k,\Bw)$, by the remark before \cite[Prop.~5.4.3]{nakajimaQuiverVarieties2001a} and \cite[Lem.~2.5.3]{nakajimaQuiverVarieties2001a}.

As observed in the proof of \cite[Prop.~12.3.2]{nakajimaQuiverVarieties2001a}, the lemma below follows from \cite[Lem.~2.9.4(1)]{nakajimaQuiverVarieties2001a}.
\begin{lemma}\label{lem:epsilon}
  If $\Bv\neq 0$ and $x\in \fL(\Bv,\Bw)$, then $\varepsilon_k(x)>0$ for some $k\in I$.
\end{lemma}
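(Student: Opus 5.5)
Since the statement is the contrapositive of a stability statement, the plan is to argue by contradiction: assume $\Bv\neq0$, $x=[B,i,j]\in\fL(\Bv,\Bw)$, and that $\tau_k$ is surjective for every $k\in I$. From this I will produce a nonzero $B$-invariant subspace of $V$ contained in $\ker j$, which contradicts the stability condition of \cite[\S2]{nakajimaQuiverVarieties2001a}. The only input beyond linear algebra is what $\pi(x)=0$ tells us about $x$.

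\emph{Step 1: consequences of $x\in\fL$.} The coordinate ring of $\fM_0(\Bv,\Bw)$ is generated by $G_\Bv$-invariants. By the classical invariant theory used in \cite{nakajimaQuiverVarieties2001a}, these are traces of cycles in the doubled quiver together with the matrix entries of $j_{k'}B_{h_\ell}\cdots B_{h_1}i_k$ for paths from $k$ to $k'$. The length-zero case $j_ki_k$ is included. Since $\pi(x)=0$, all these functions vanish at $x$. The vanishing of traces of cycles means that the representation $B$ of the doubled quiver is nilpotent: there is an $N$ with every path of length $\ge N$ acting by zero on $V=\bigoplus_kV_k$. The second family gives $j\circ p\circ i=0$ for every path $p$, including $p$ of length zero.

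\emph{Step 2: surjectivity forces generation by $\operatorname{im} i$.} Let $V^{(m)}\subset V$ be the span of the images of all paths of length $\ge m$. Then $V^{(0)}=V$, $V^{(N)}=0$, and each $V^{(m)}$ is $B$-invariant. Surjectivity of all $\tau_k$ says $V_k=\sum_{\mathrm{in}(h)=k}B_h(V_{\mathrm{out}(h)})+i_k(W_k)$, that is, $V=V^{(1)}+i(W)$. Substituting this identity into itself repeatedly gives $V=V^{(m)}+\sum_{\ell(p)<m}p\,i(W)$ for every $m$. Taking $m=N$ shows that $V$ is spanned by the vectors $p\,i(w)$. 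Consequently, for every $m$, $V^{(m)}$ is spanned by the vectors $p\,i(w)$ with $\ell(p)\ge m$.

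\emph{Step 3: contradiction.} Let $m$ be maximal with $V^{(m)}\neq0$; such an $m$ exists because $\Bv\neq0$. Then every arrow kills $V^{(m)}$ (its image lies in $V^{(m+1)}=0$), so $V^{(m)}$ is $B$-invariant. By Step 2, $j$ vanishes on $V^{(m)}$, since $j\,p\,i=0$ for every path $p$ by Step 1. This covers the case $m=0$ as well, using $j_ki_k=0$. So $V^{(m)}$ is a nonzero $B$-invariant subspace contained in $\ker j$, contradicting stability.

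The main obstacle is Step 1: correctly invoking the description of the invariants (the generators of $\C[\fM_0]$) and the resulting nilpotency, which is exactly the content of \cite[Lem.~2.9.4(1)]{nakajimaQuiverVarieties2001a}. I would cite that lemma rather than reprove it. The remaining steps are elementary.
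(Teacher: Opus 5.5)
Your argument is correct. The paper gives no argument of its own here: it only records that, as in the proof of Nakajima's Prop.~12.3.2, the lemma follows from his Lem.~2.9.4(1). Your proposal supplies a self-contained deduction from stability and from the vanishing at $x$ of invariants that vanish at $\pi(x)=0$, which is a reasonable thing to write out.

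Two remarks.

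First, in Step 1 you only need that traces of cycles and the matrix entries of $j\,p\,i$ are $G_\Bv$-invariant functions vanishing at the origin. You do not need that they generate the invariant ring. The passage from vanishing traces to nilpotency is also standard over $\C$. Any $a$ in the span of paths of positive length satisfies $\operatorname{tr}(a^n)=0$ for all $n\geq 1$, so it is nilpotent. An algebra of nilpotent matrices is nilpotent. So the citation in Step 1 is not essential.

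Second, Steps 2--3 can be shortened. Since $j\,p\,i=0$ for every path $p$, the smallest $B$-invariant subspace containing $\operatorname{Im} i$, namely $\sum_p p\,i(W)$, lies in $\ker j$. Stability therefore forces $i=0$ on $\fL(\Bv,\Bw)$. Then $\operatorname{Im}\tau_k=\sum_{\mathrm{in}(h)=k}\operatorname{Im}B_h$, and surjectivity of every $\tau_k$ says $V=V^{(1)}$. Iterating gives $V=V^{(N)}=0$, a contradiction.

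Even within your own argument, the choice of a maximal $m$ in Step 3 is unnecessary. Once Step 2 shows that $V$ is spanned by the vectors $p\,i(w)$, $j$ vanishes on all of $V$. Since $V$ is trivially $B$-invariant, stability already gives the contradiction.
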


Whenever we take a Grassmannian bundle of a vector bundle, the effect on Chow groups is easily understood as a relative version of the Schubert cell description of the Chow group of the Grassmannian itself.

 Let $\mathcal{Q}$ be a vector bundle of rank $m$ on a scheme $B$, let $p\colon G(n,\mathcal{Q})\to B$ be the Grassmann bundle of $n$-planes, and let $\mathcal{S}$ be its tautological subbundle.

 Let $\SymR_{n,m-n}$ be the polynomials symmetric under the Young subgroup $\Sym_n\times\Sym_{m-n}$.  For a partition $\nu$ let $\Delta_\nu(c)=\det(c_{\nu_a+b-a})_{a,b}$, a polynomial with integer coefficients in the Chern classes of a vector bundle.  As a $\SymR_m$-module, $\SymR_{n,m-n}$ is freely generated by the $\Delta_\nu(c)$, with $c_j$ the $j$th elementary symmetric polynomial in $x_1,\dots,x_n$, where $\nu$ runs over partitions with at most $m-n$ parts, each part at most $n$.
 This is relevant for us since it is the equivariant Chow ring $A_*^{GL_m}(G(n,\C^m))\cong \SymR_{n,m-n}$.
\begin{lemma}\label{lem:grassmann}
  The group $A_*(G(n,\mathcal{Q}))$ is spanned by the classes $f(\mathcal{S})\cap p^*a$ for $f\in\SymR_n$ and $a\in A_*(B)$.
\end{lemma}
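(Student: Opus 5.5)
Two routes are available. One is to cite the known Grassmann bundle formula for Chow groups (Fulton, Example~14.6.6), which gives more: $A_*(G(n,\mathcal{Q}))$ is a free $A_*(B)$-module on the Schubert classes $\Delta_\nu(c(\mathcal{S}^\vee))\cap p^*(-)$, or equivalently on $\Delta_\nu$ of the Chern classes of the universal quotient bundle. The other, which I will carry out since the statement asks only for spanning, is a direct proof by stratification.

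The plan is a reduction to the case of a trivial bundle, followed by a cellular decomposition of the Grassmannian. It has three steps.

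\textbf{Step 1: localization.} Pick a nonempty open $U\subset B$ over which $\mathcal{Q}$ is trivial, and set $Z=B\setminus U$. There is a right exact localization sequence
\[A_*(G(n,\mathcal{Q}|_Z))\to A_*(G(n,\mathcal{Q}))\to A_*(G(n,\mathcal{Q}|_U))\to 0.\]
The classes $f(\mathcal{S})\cap p^*a$ are compatible with both maps:
\begin{itemize}
\item flat pullback to the open set commutes with Chern classes and with $p^*$;
\item proper pushforward from the closed set satisfies the projection formula, and $p^*$ commutes with pushforward along the closed embedding $Z\hookrightarrow B$ (Fulton, Prop.~1.7).
\end{itemize}
By Noetherian induction on $B$, the classes pushed forward from over $Z$ are already of the required form. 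So it suffices to treat $G(n,\mathcal{Q}|_U)\cong U\times G(n,m)$.

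\textbf{Step 2: the product case.} Use the Schubert cell decomposition of $G(n,m)$ relative to a fixed flag of $\C^m$. This gives closed relative Schubert varieties $U\times\overline{\Omega}_\lambda$, filtered by closed unions of cells, each cell being an affine space $U\times\mathbb{A}^d$. Applying the localization sequence along this filtration, together with the surjectivity of flat pullback for affine-space bundles, $A_*(U)\to A_*(U\times\mathbb{A}^d)$, shows that $A_*(U\times G(n,m))$ is spanned by the classes $[\overline{\Omega}_\lambda]\times a$.

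\textbf{Step 3: Schubert classes as Chern polynomials.} The Giambelli formula expresses each $[\overline{\Omega}_\lambda]$ as a determinant $\Delta$ in the Chern classes of the universal bundles. Pulled back to $U\times G(n,m)$, this gives $[U\times\overline{\Omega}_\lambda]\cap p^*a=\Delta(\ldots)\cap p^*a$.

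\emph{Main obstacle.} The Giambelli determinant may be written in Chern classes of the quotient bundle $\mathcal{Q}/\mathcal{S}$, or of $\mathcal{S}^\vee$, rather than of $\mathcal{S}$ itself. I handle this as follows.
\begin{itemize}
\item For the quotient bundle, the Whitney formula gives $c(\mathcal{Q}/\mathcal{S})=c(p^*\mathcal{Q})\,c(\mathcal{S})^{-1}$. Since $\mathcal{Q}$ is trivial over $U$, $c(p^*\mathcal{Q})=1$, so each $c_j(\mathcal{Q}/\mathcal{S})$ is an integer polynomial in the $c_i(\mathcal{S})$.
\item For the dual, $c_i(\mathcal{S}^\vee)=(-1)^ic_i(\mathcal{S})$, which again lies in $\SymR_n$.
\end{itemize}
In either case every Schubert class equals $f(\mathcal{S})\cap p^*[U]$ for some $f\in\SymR_n$. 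The caps with $p^*a$ then follow from the relation $p^*a\cap$ (product class) $= [\overline{\Omega}_\lambda]\times a$, which holds because $p$ is a projection.

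Combining the three steps, $A_*(G(n,\mathcal{Q}))$ is spanned by the classes $f(\mathcal{S})\cap p^*a$ with $f\in\SymR_n$ and $a\in A_*(B)$.
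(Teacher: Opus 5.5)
Your argument is correct, but the route you carry out differs from the paper's. The paper takes your first option. It applies Fulton's basis theorem for Grassmann bundles directly over $B$, writing every class as a sum of $\Delta_\lambda\cap p^*a_\lambda$, where $\Delta_\lambda$ is a Schur determinant in the Chern classes of the virtual bundle $p^*\mathcal{Q}/\mathcal{S}-p^*\mathcal{Q}$. The Whitney formula $c(p^*\mathcal{Q}/\mathcal{S}-p^*\mathcal{Q})=c(\mathcal{S})^{-1}$ then shows these are integer polynomials in the $c_j(\mathcal{S})$, with no need to trivialize $\mathcal{Q}$.

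You instead reprove the spanning half of that theorem. Localization and Noetherian induction reduce to an open set where $\mathcal{Q}$ is trivial. There the Schubert cell decomposition of $G(n,m)$ gives spanning by the classes $a\times[\overline{\Omega}_\lambda]$, and Giambelli turns these into Chern polynomials. The passage to Chern classes of $\mathcal{S}$ is immediate because $c(p^*\mathcal{Q})=1$ on that open set.

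The citation is one line and gives more than is needed, namely freeness over $A_*(B)$ with an explicit basis. Your version is self-contained, using only the localization sequence, affine-space bundles and the classical Giambelli formula, at the cost of length.

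Two points to tighten if you write it out:
\begin{enumerate}
\item Lifting a class $f(\mathcal{S}|_U)\cap p_U^*a_U$ from the open set also needs surjectivity of $A_*(B)\to A_*(U)$, so that you can choose $a$ with $a|_U=a_U$.
\item In Step~3, the formulas ``$[U\times\overline{\Omega}_\lambda]\cap p^*a$'' and ``$p^*a\cap(\text{product class})$'' are loosely phrased, since on a possibly singular $U\times G(n,m)$ one caps with Chern (operational) classes, not with cycles. The identity you actually need is $\Delta(\mathcal{S})\cap p^*a=a\times[\overline{\Omega}_\lambda]$, where $p^*a=a\times[G(n,m)]$. It follows from Giambelli on $G(n,m)$ together with the compatibility of Chern class operators with exterior products.
\end{enumerate}
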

\begin{proof}
  By Fulton's basis theorem \cite[Prop.~14.6.5]{fultonIntersectionTheory1998}, every class in $A_*(G(n,\mathcal{Q}))$ is a sum of classes $\Delta_\lambda\cap p^*a_\lambda$ with $a_\lambda\in A_*(B)$, where $\Delta_\lambda$ is a Schur determinant in the Chern classes of $p^*\mathcal{Q}/\mathcal{S}-p^*\mathcal{Q}$.  Since $c(p^*\mathcal{Q}/\mathcal{S}-p^*\mathcal{Q})=c(\mathcal{S})^{-1}$, these Chern classes are polynomials in those of $\mathcal{S}$.  See also \cite[Prop.~4.5.1]{AndersonFulton24} for the corresponding presentation of the cohomology of a Grassmann bundle.
\end{proof}
This lemma builds in our convolution mechanism:
\begin{lemma}\label{lem:convolution-restriction}
	The map $x\otimes f\mapsto F_k^{(n)}[f](x)$ defines a map $A_*(\fL(\Bv-n\Be_k,\Bw))\otimes \SymR_n \to A_*(\fL(\Bv,\Bw))$. Restriction to the open subsets $\fL_{k;0}(\Bv-n\Be_k,\Bw)$ and $\fL_{k;\leq n}(\Bv,\Bw)$ gives a commutative diagram
	\[\begin{tikzcd}
	A_*(\fL(\Bv-n\Be_k,\Bw))\otimes \SymR_n \arrow[rr,"F^{(n)}_k"]\arrow[d,"\mathrm{res}\otimes 1"'] && A_*(\fL(\Bv,\Bw))\arrow[d,"\mathrm{res}"]\\
	A_*(\fL_{k;0}(\Bv-n\Be_k,\Bw))\otimes \SymR_n \arrow[r,"\psi"] & A_*(\fL_{k;n}(\Bv,\Bw)) \arrow[r,"\iota_*"] & A_*(\fL_{k;\leq n}(\Bv,\Bw))
	\end{tikzcd}\]
	where $\psi(c\otimes f)=f(\mathcal{S})\cap p^*c$ for $p\colon \fL_{k;n}(\Bv,\Bw)\to \fL_{k;0}(\Bv-n\Be_k,\Bw)$ the Grassmannian bundle above and $\mathcal{S}$ its tautological subbundle, and $\iota\colon \fL_{k;n}(\Bv,\Bw)\hookrightarrow \fL_{k;\leq n}(\Bv,\Bw)$ is the closed inclusion.  In particular, the maps $\iota_*$ and $\mathrm{res} \circ F^{(n)}_k $ have the same image.
\end{lemma}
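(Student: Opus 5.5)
Bilinearity in $(x,f)$ is immediate from the definition: $f\mapsto f\cap[\fP^{(n)}_k]$ is additive, exterior product is bilinear, and $\delta^!$ and $q_*$ are homomorphisms. So the map factors through $A_*(\fL(\Bv-n\Be_k,\Bw))\otimes\SymR_n$. The real work is the commutative square, which I will obtain by restricting the whole construction of \cref{def:F} to open subsets and then using \cref{lem:grassmann-bundle}.

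Set $U=\fL_{k;\leq n}(\Bv,\Bw)$. Restriction to the open set $U$ is flat pullback, and it commutes with proper pushforward (by base change to $q^{-1}(U)$) and with refined Gysin maps \cite[Prop.~6.2, Th.~6.2]{fultonIntersectionTheory1998}. Hence $\mathrm{res}\circ F^{(n)}_k[f](c)$ is computed by the same formula with $\fP^{(n)}_k$ replaced by $\fP^{(n)}_k\cap(\fMo\times\fM_{k;\leq n})$. By \cref{lem:grassmann-bundle}, this open piece is the graph $\Gamma$ of the bundle map $G(n,\mathcal{Q})\to\fMo^\circ\subset\fMo$, whose source is $\fM_{k;n}$. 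Two consequences follow:

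\begin{enumerate}
\item Its first projection lands in the open set $\fMo^\circ$. So we may replace $c$ by its restriction to $\fL_{k;0}(\Bv-n\Be_k,\Bw)=\fL\cap\fMo^\circ$, which is again compatible with $\delta^!$ by flat pullback along the open inclusion of $\fMo^\circ$.
\item On $\Gamma$ the bundle $\mathcal{S}=V^2_k/V^1_k$ is identified with the tautological bundle of the Grassmannian bundle. To be careful, $V^2_k/V^1_k$ sits inside $\ker\tau_k/\operatorname{im}\sigma_k$ as the chosen $n$-plane, via $\sigma_k$ of $V^2$.
\end{enumerate}

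Now I compute $\delta^!(c\times(f\cap[\Gamma]))$. The graph $\Gamma\subset\fMo^\circ\times\fM_{k;n}$ is isomorphic to $G(n,\mathcal{Q})$, and $p_1|_\Gamma=p$ is smooth. For a flat or smooth morphism $p$ with graph $\Gamma$, the identity $\delta^!(c\times[\Gamma])=p^*c$ holds on $\fL\times_{\fMo}\Gamma\cong p^{-1}(\fL_{k;0})=\fL_{k;n}(\Bv,\Bw)$. This is the standard fact that pulling back along $\delta$ the class $c\times[\Gamma]$ gives the flat pullback, via \cite[Prop.~8.1.1 / Ex.~6.6 style]{fultonIntersectionTheory1998}; the key input is that $\Gamma\to\fMo$ is flat, so the intersection is transverse and the excess bundle is trivial. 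Capping with $f(\mathcal{S})$ commutes with $\delta^!$, so we get $f(\mathcal{S})\cap p^*c=\psi(c\otimes f)$. Then $q$ restricted here is the closed embedding $\iota$, since $p_2$ is injective on this locus. This gives $\iota_*\psi$, and the square commutes.

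Finally, for the images: $\mathrm{res}$ on $A_*(\fL_{k;0}(\Bv-n\Be_k,\Bw))$ is surjective by the localization sequence, and $\psi$ is surjective by \cref{lem:grassmann}. So the image of $\mathrm{res}\circ F^{(n)}_k$ equals $\iota_*(A_*(\fL_{k;n}))$. I expect the main obstacle to be the identification $\delta^!(c\times(f\cap[\Gamma]))=f(\mathcal{S})\cap p^*c$. This requires care that the fiber square restricts correctly, that $p$ is flat of the right relative dimension so no excess intersection class appears, and that the fiber product over the core is exactly $\fL_{k;n}(\Bv,\Bw)$, using the remark after \cref{lem:grassmann-bundle}.
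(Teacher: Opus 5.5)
Your proposal is correct and follows the paper's proof essentially step for step. You restrict by flat pullback to the open pieces, identify the restricted Hecke correspondence with the graph of the smooth map $\fM_{k;n}(\Bv,\Bw)\to\fMo^\circ\hookrightarrow\fMo$ via \cref{lem:grassmann-bundle}, use that refined intersection with the graph of a smooth morphism is flat pullback (Fulton \S 8.1), observe that $q$ restricts to $\iota$, and deduce the image statement from surjectivity of $\mathrm{res}$ and $\psi$. The only soft spot is your ``transverse, trivial excess bundle'' heuristic: it should be replaced by the precise fact that for a smooth morphism $g$ the refined Gysin pullback $g^!$ agrees with flat pullback $g^*$.
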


\begin{proof}
  Restriction to an open subset is flat pullback, which commutes with proper pushforward, refined Gysin maps, and capping by Chern classes \cite[Prop.~1.7, Th.~6.2(b)]{fultonIntersectionTheory1998}.  Thus we may compute the restriction of $F^{(n)}_k[f](c)$ to $\fL_{k;\leq n}(\Bv,\Bw)$ using the restriction of $\fP^{(n)}_k$ to $\fMo\times \fM_{k;\leq n}(\Bv,\Bw)$.  By \cref{lem:grassmann-bundle}, this restriction is the graph of the smooth morphism
  \[g\colon \fMf{k}{n}\xrightarrow{p}\fMo^\circ\hookrightarrow \fMo,\]
  and $\mathcal{S}$ is the tautological subbundle of the Grassmann bundle $p$.  Since the image of $g$ lies in $\fMo^\circ$, we may replace $\fMo$ by $\fMo^\circ$ and $c$ by its restriction $c^\circ$ to $\fL_{k;0}(\Bv-n\Be_k,\Bw)$.

  Refined intersection with the graph of $g$ sends $c^\circ\times[\Gamma_g]$ to the refined pullback of $c^\circ$ along $g$ \cite[\S 8.1]{fultonIntersectionTheory1998}.  Since $g$ is smooth, this is the flat pullback $g^*c^\circ$.  Its restriction to $g^{-1}(\fL_{k;0})=\fL_{k;n}$ is $p^*c^\circ$.  Capping with $f(\mathcal{S})$ commutes with these operations, and the map $q$ of \cref{def:F} restricts to $\iota$.  This proves commutativity.
  Finally, $\psi$ is surjective by \cref{lem:grassmann-bundle,lem:grassmann}.  The map $\mathrm{res}\otimes 1$ is surjective because restriction to an open subset is surjective on Chow groups \cite[Prop.~1.8]{fultonIntersectionTheory1998}.  Since $\operatorname{res}\circ F^{(n)}_k=\iota_*\circ\psi\circ(\mathrm{res}\otimes 1)$, the maps $\iota_*$ and $\operatorname{res}\circ F^{(n)}_k$ have the same image.
\end{proof}

\subsection{Cyclicity of the Chow groups of the core}
\label{sec:cyclic}

The following is an integral, Chow-theoretic version of \cite[Prop.~12.3.2]{nakajimaQuiverVarieties2001a}.
\begin{theorem}\label{thm:cyclic}
  For every $\Bv$, the group $A_*(\fL(\Bv,\Bw))$ is spanned over $\Z$ by the classes
  \[F^{(n_1)}_{k_1}[f_1]\,F^{(n_2)}_{k_2}[f_2]\cdots F^{(n_s)}_{k_s}[f_s]\,[\fL(0,\Bw)],\]
  where $\sum_a n_a\Be_{k_a}=\Bv$ and $f_a\in\SymR_{n_a}$.
\end{theorem}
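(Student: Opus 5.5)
We argue by induction on $|\Bv|=\sum_i v_i$; the base case $\Bv=0$ holds because $\fL(0,\Bw)$ is a point and $A_*(\fL(0,\Bw))=\Z[\fL(0,\Bw)]$. For $\Bv\neq 0$, let $M\subset A_*(\fL(\Bv,\Bw))$ be the span of the displayed classes. By induction, every class $F^{(n)}_k[f](c)$ with $c\in A_*(\fL(\Bv-n\Be_k,\Bw))$ and $n\geq 1$ lies in $M$: write $c$ as a combination of monomials in the $F$'s applied to $[\fL(0,\Bw)]$ and compose. So it suffices to show that $A_*(\fL(\Bv,\Bw))$ is spanned by the images of the maps $F^{(n)}_k\colon A_*(\fL(\Bv-n\Be_k,\Bw))\otimes\SymR_n\to A_*(\fL(\Bv,\Bw))$ over all $k\in I$ and $n\geq1$.

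We prove this by a second, Noetherian induction on closed subvarieties, using the localization exact sequence $A_*(Z)\to A_*(X)\to A_*(X\setminus Z)\to 0$ of \cite[Prop.~1.8]{fultonIntersectionTheory1998}. Since $A_*(\fL)$ is generated by classes $[Y]$ of irreducible closed subvarieties $Y\subset\fL(\Bv,\Bw)$, it suffices to show $[Y]\in M+(\text{span of classes supported on proper closed subsets of }Y)$, and then to induct on dimension. Given $Y$, \cref{lem:epsilon} gives that the generic point of $Y$ has $\varepsilon_k>0$ for some $k$. Choose such a $k$ and let $n=\varepsilon_k(Y)\geq 1$. Then $Y\subset\fL_{k;\geq n}$, and $Y^\circ=Y\cap\fL_{k;\leq n}=Y\cap \fL_{k;n}$ is a nonempty open subset of $Y$. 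Its class lies in $A_*(\fL_{k;n}(\Bv,\Bw))$, and $\iota_*[Y^\circ]=[Y]|_{\fL_{k;\leq n}}$. By \cref{lem:convolution-restriction}, there is $\xi=\sum_a F^{(n)}_k[f_a](c_a)\in M$ with $\mathrm{res}(\xi)=\mathrm{res}([Y])$ in $A_*(\fL_{k;\leq n}(\Bv,\Bw))$. The localization sequence for the closed complement $\fL_{k;\geq n+1}$ then shows that $[Y]-\xi$ is pushed forward from $A_*(\fL_{k;\geq n+1}(\Bv,\Bw))$.

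This difference is not automatically supported on a proper closed subset of $Y$, so the induction has to be organised carefully; this is the main obstacle. The cleanest fix is to induct not on $Y$ but on the stratification. Define a filtration of $\fL(\Bv,\Bw)$ by closed subsets indexed by the tuples $(\varepsilon_k)_{k\in I}$, for instance ordered lexicographically by $\sum_k\varepsilon_k$ in decreasing order. Equivalently, prove the stronger statement that for every closed $Z\subset\fL(\Bv,\Bw)$, the image of $A_*(Z)$ lies in $M$, by descending induction on the values of $\varepsilon$. Concretely, for a fixed $k$, argue by descending induction on $n$ that the image of $A_*(\fL_{k;\geq n})$ lies in $M+(\text{image of }A_*(\fL_{k;0}))$. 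For $n$ large, $\fL_{k;\geq n}$ is empty since $\varepsilon_k\leq v_k$. For the inductive step, $A_*(\fL_{k;n})$ is hit modulo $A_*(\fL_{k;\geq n+1})$ by the argument above, applied now to all classes rather than to a single $Y$. This shows $A_*(\fL)=M+\operatorname{im}A_*(\fL_{k;0}\text{-closure part})$.

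To handle classes supported where $\varepsilon_k=0$ for a given $k$, iterate over $k\in I$. Let $Z_S=\bigcap_{k\in S}\fL_{k;0}$, a locally closed set. Run the same descending argument for a second vertex $k'$ inside the closed subvariety $\overline{\{\varepsilon_k=0\}}$, or more simply inside the closed set $C_S=\fL\setminus\bigcup_{k\in S}\fL_{k;\geq1}$, which need not be closed. To avoid this, replace the filtration by the single closed stratification indexed by the vector $(\varepsilon_k)_k$ with the partial order given by the total $\sum_k\varepsilon_k$, using the observation that on the open set where $\varepsilon_k\leq n$, \cref{lem:convolution-restriction} still applies with the same proof. \cref{lem:epsilon} guarantees that the stratum with all $\varepsilon_k=0$ is empty, which terminates the induction.

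I expect the delicate point to be exactly this bookkeeping. It requires checking that the restriction statement of \cref{lem:convolution-restriction} can be applied inside each locally closed piece. It also requires that \cref{lem:epsilon} prevents any residual classes from surviving, since every point lies in some $\fL_{k;\geq1}$, so the closed sets $\fL_{k;\geq1}$ cover $\fL(\Bv,\Bw)$.
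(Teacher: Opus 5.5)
Your local step is exactly the paper's. For irreducible $Y$ with $\varepsilon_k(Y)=n\geq 1$, \cref{lem:convolution-restriction} gives $D\in M$ such that $[Y]-D$ is pushed forward from $\fL_{k;\geq n+1}(\Bv,\Bw)$. Your outer induction on $|\Bv|$ is also the paper's.

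\textbf{The gap.} It lies in the global bookkeeping, which you flag as the main obstacle but do not resolve. The ordering you finally settle on, by the total $\sum_{k'}\varepsilon_{k'}$, does not work. The error terms are classes $[Y']$ of arbitrary irreducible closed $Y'\subset\fL_{k;\geq n+1}(\Bv,\Bw)$, not subvarieties of $Y$. So the only thing you know about them is $\varepsilon_k(Y')\geq n+1$ for the single vertex $k$ you used. For example, suppose $(\varepsilon_k(Y),\varepsilon_{k'}(Y))=(1,5)$.
\begin{itemize}
\item If you use $k$, nothing prevents an error term with $(2,0)$, whose total is smaller.
\item If you use $k'$, nothing prevents $(0,6)$, whose total is the same.
\end{itemize}
So the total is not forced to move in the direction your induction needs. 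The detours through $\overline{\{\varepsilon_k=0\}}$ and ``the image of $A_*(\fL_{k;0})$'' are also not meaningful as stated. The set $\fL_{k;0}$ is open, so there is no pushforward from it to $A_*(\fL(\Bv,\Bw))$.

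\textbf{The fix.} It is already in your third paragraph, once you stop the descending induction at $n=1$ and drop the worry about ``residual classes where $\varepsilon_k=0$''.
\begin{itemize}
\item Fix $k$ and induct downward on $n$, starting from $n=v_k+1$, where $\fL_{k;\geq n}$ is empty. This shows $[Y]\in M$ for every irreducible closed $Y$ with $\varepsilon_k(Y)\geq n$. The reason is that the error terms for $Y$ with $\varepsilon_k(Y)=n$ lie in $\fL_{k;\geq n+1}$ for the same $k$.
\item At $n=1$ this covers every irreducible $Y\subset\fL_{k;\geq 1}$, for each $k$ separately.
\item By \cref{lem:epsilon}, the finitely many closed sets $\fL_{k;\geq 1}$ cover $\fL(\Bv,\Bw)$. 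An irreducible $Y$ therefore lies in one of them.
\end{itemize}
So there are no residual classes: the vertex is chosen per generator $[Y]$, not globally.

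The paper packages the same argument as one descending induction on $N(Y)=\max_k\varepsilon_k(Y)$, which is at most $\max_k v_k$. It chooses $k$ attaining the maximum, so every error term satisfies $N(Y')\geq\varepsilon_k(Y')\geq N(Y)+1$.
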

\begin{proof}
  Let $\mathcal{A}(\Bv)\subset A_*(\fL(\Bv,\Bw))$ denote the span of these classes; note that $F^{(n)}_k[f](\mathcal{A}(\Bv-n\Be_k))\subset \mathcal{A}(\Bv)$ for any $f\in\SymR_n$.  We show $\mathcal{A}(\Bv)=A_*(\fL(\Bv,\Bw))$ by induction on $|\Bv|=\sum_k v_k$.
  If $\Bv=0$, then $\fL(0,\Bw)$ is a point and there is nothing to prove.

  Suppose $\Bv\neq 0$ and that the claim holds for all smaller dimension vectors.  The group $A_*(\fL(\Bv,\Bw))$ is generated by the classes $[Y]$ of irreducible closed subvarieties $Y\subset \fL(\Bv,\Bw)$.  For such $Y$, let $N(Y)=\max_k\varepsilon_k(Y)$, where $\varepsilon_k(Y)$ is the generic value of $\varepsilon_k$ on $Y$.  By \cref{lem:epsilon}, we have $N(Y)\geq 1$; and, of course, $N(Y)\leq \max_k v_k$ so the possible values of $N(Y)$ are bounded above.

 For each fixed $\Bv$, we prove that $[Y]\in \mathcal{A}(\Bv)$ by descending induction on $N(Y)$.  So fix an integer $n$, and assume that  $[Y']\in \mathcal{A}(\Bv)$ for every irreducible closed $Y'\subset \fL(\Bv,\Bw)$ with $N(Y')>n$.

Now consider $Y$ such that $N(Y)=n$, so for some $k$, we have $\varepsilon_k(Y)=n$.   Let $U=\fL_{k;\leq n}(\Bv,\Bw)$ and $Z=\fL_{k;\geq n+1}(\Bv,\Bw)$, which are complementary open and closed subsets of $\fL(\Bv,\Bw)$.  Note that by assumption, $Y^{\circ}=Y\cap U\neq \emptyset$ is an open dense subset of $Y$, and a closed subset of $ \fL_{k;n}(\Bv,\Bw) $.

By \cref{lem:grassmann-bundle}, $\fL_{k;n}(\Bv,\Bw)$ is a Grassmann bundle over $\fL_{k;0}(\Bv-n\Be_k,\Bw)$ with tautological subbundle $\mathcal{S}$.  By \cref{lem:grassmann},
  \[[Y^{\circ}]=\sum_\ell f_\ell(\mathcal{S})\cap p^*a_\ell,\qquad f_\ell\in\SymR_n,\ a_\ell\in A_*(\fL_{k;0}(\Bv-n\Be_k,\Bw)).\]
    Since $\fL_{k;0}(\Bv-n\Be_k,\Bw)$ is open in $\fL(\Bv-n\Be_k,\Bw)$, restriction $A_*(\fL(\Bv-n\Be_k,\Bw))\to A_*(\fL_{k;0}(\Bv-n\Be_k,\Bw))$ is surjective \cite[Prop.~1.8]{fultonIntersectionTheory1998}, so we can choose $b_\ell\in A_*(\fL(\Bv-n\Be_k,\Bw))$ restricting to $a_\ell$.  By the inductive hypothesis on $|\Bv|$, $b_\ell\in \mathcal{A}(\Bv-n\Be_k)$.  Let
  \[D=\sum_\ell F^{(n)}_k[f_\ell](b_\ell)\in \mathcal{A}(\Bv).\]
  Now, we consider the behavior of this class in the exact sequence (\cite[Prop.~1.8]{fultonIntersectionTheory1998})
  \[A_*(Z)\to A_*(\fL(\Bv,\Bw))\to A_*(U)\to 0.\]
  Applying the commutative diagram of \cref{lem:convolution-restriction} to each term of $D$, and using $b_\ell|_{\fL_{k;0}(\Bv-n\Be_k,\Bw)}=a_\ell$, we get
  \[D|_U=\iota_*\psi\Big(\sum_\ell a_\ell\otimes f_\ell\Big)=\iota_*\Big(\sum_\ell f_\ell(\mathcal{S})\cap p^*a_\ell\Big)=\iota_*[Y^{\circ}]=[Y]|_U,\]
  where the last equality holds since $Y^\circ=Y\cap U$ is closed in $\fL_{k;n}(\Bv,\Bw)$.  So by exactness, $[Y]-D$ is the pushforward of a class on $Z$.  That is, it is an integral combination of classes $[Y']$ of irreducible closed subvarieties $Y'\subset Z$.  Each such $Y'$ has $\varepsilon_k(Y')\geq n+1$, so $[Y']\in \mathcal{A}(\Bv)$ by induction. This completes the proof.
\end{proof}
This shows that as a module over these operators, the homology of the core is cyclically generated.

\begin{corollary}\label{cor:cyclic-homology}
  For any field $\kk$, the space $H_*(\fL_{\Lambda};\kk)=A_*(\fL_{\Lambda})\otimes_\Z \kk$ is spanned by the classes obtained from the point class $[\fL(0,\Bw)]$ by applying the operators $F^{(n)}_k[f]$ for $f\in \SymR_n$.
\end{corollary}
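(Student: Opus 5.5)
The plan is to deduce the corollary directly from \cref{thm:cyclic}, using property (S) of \cref{lem:pairing} to pass from integral Chow groups to homology with field coefficients. There are three steps: identify $H_*(\fL_\Lambda;\kk)$ with $A_*(\fL_\Lambda)\otimes_\Z\kk$, check that the operators are compatible with this identification, and then tensor the integral spanning statement with $\kk$.

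First, the identification. By \cref{lem:pairing}(2), for each $\Bv$ the cycle map $\cl\colon A_*(\fL(\Bv,\Bw))\to H_{2*}(\fL(\Bv,\Bw);\Z)$ is an isomorphism, the odd homology vanishes, and the even homology is free abelian. Since $\fL(\Bv,\Bw)$ is compact, Borel--Moore homology agrees with ordinary homology. The universal coefficient theorem for ordinary homology, with no $\mathrm{Tor}$ terms because all integral homology is free, then gives
\[H_*(\fL(\Bv,\Bw);\kk)\cong H_*(\fL(\Bv,\Bw);\Z)\otimes_\Z\kk\cong A_*(\fL(\Bv,\Bw))\otimes_\Z\kk.\]
Summing over $\Bv$ (equivalently over $\alpha$) gives the stated equality for $\fL_\Lambda$.

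Second, compatibility of the operators. The operators $F^{(n)}_k[f]$ on $H_*(\fL;\kk)$ are convolution with $f\cap[\fP^{(n)}_k]$, as in \eqref{eq:pull-push0}. As noted after \cref{def:F}, the cycle map intertwines the Chow and integral Borel--Moore versions. The $\kk$-coefficient convolution is the base change of the integral one, because pullback, cap product with an integral class, and proper pushforward all commute with the change of coefficients $\Z\to\kk$. This holds in particular here, where every group involved is free. Hence under the identification above, $F^{(n)}_k[f]\otimes\kk$ on $A_*\otimes\kk$ agrees with $F^{(n)}_k[f]$ on $H_*(-;\kk)$. This step is the only one needing care; the point is simply that all the integral groups are free, so base change introduces no torsion phenomena.

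Finally, tensoring with $\kk$. \Cref{thm:cyclic} says that the iterated classes $F^{(n_1)}_{k_1}[f_1]\cdots F^{(n_s)}_{k_s}[f_s][\fL(0,\Bw)]$ span $A_*(\fL(\Bv,\Bw))$ over $\Z$. Tensoring a spanning set with $\kk$ gives a spanning set of $A_*(\fL(\Bv,\Bw))\otimes\kk$ over $\kk$. By the compatibility established in the second step, these images are exactly the classes obtained from the point class $[\fL(0,\Bw)]$ by applying the operators $F^{(n)}_k[f]$ in $H_*(\fL_\Lambda;\kk)$. One may take $f\in\SymR_n$ integral, or more generally $f\in\SymR_n\otimes\kk$, which spans the same subspace. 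This proves the corollary.
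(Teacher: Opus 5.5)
Your proposal is correct and follows the paper's route. The paper states this corollary as an immediate consequence of \cref{thm:cyclic}: property (S) from \cref{lem:pairing} and the universal coefficient theorem identify $H_*(\fL_\Lambda;\kk)$ with $A_*(\fL_\Lambda)\otimes_\Z\kk$, compatibly with the convolution operators, so the integral spanning set tensors to a $\kk$-spanning set.
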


Since the homology of the core is dual to the cohomology of the quiver variety, we obtain a dual statement about the cohomology of the quiver variety.
\begin{lemma}\label{lem:cyclic-cohomology}
  For any field $\kk$, the space $H^*(\fM_{\Lambda};\kk)$ is cogenerated by the point class $[\fM(0,\Bw)]$; that is, any non-zero subspace of $H^*(\fM_{\Lambda};\kk)$ invariant under the operators $E^{(n)}_k[f]$ for $f\in \SymR_n$ contains the point class.
\end{lemma}
\begin{proof}
  For $v\in H^*(\fM_{\Lambda};\kk)$ non-zero, choose $\alpha$ such that $v$ has non-zero projection to $H^*(\fM_{\Lambda,\alpha};\kk)$ and the weight $\mu=\Lambda-\alpha$ is minimal with this property.  By \cref{cor:cyclic-homology} and the perfect pairing of \cref{lem:pairing}, there is a sequence of operators $F^{(n_1)}_{k_1}[f_1],\dots,F^{(n_s)}_{k_s}[f_s]$ such that
  \[\big\langle v,F^{(n_1)}_{k_1}[f_1]\cdots F^{(n_s)}_{k_s}[f_s][\fL(0,\Bw)]\big\rangle\neq 0.\]
  By the minimality of $\mu$, the image $E^{(n_s)}_{k_s}[f_s]\cdots E^{(n_1)}_{k_1}[f_1]v$ lies in $H^*(\fM(0,\Bw);\kk)$: these operators carry the component of $v$ of weight $\nu$ to weight $\nu+\alpha$, which is a weight of $H^*(\fM_{\Lambda};\kk)$ only if $\nu\leq \mu=\Lambda-\alpha$, so every component other than that of weight $\mu$ is killed.
  By \cref{lem:adjoint}, the non-zero pairing
\[\big\langle E^{(n_s)}_{k_s}[f_s]\cdots E^{(n_1)}_{k_1}[f_1]v, [\fL(0,\Bw)]\big\rangle=\big\langle v,F^{(n_1)}_{k_1}[f_1]\cdots F^{(n_s)}_{k_s}[f_s][\fL(0,\Bw)]\big\rangle\]
shows that $E^{(n_s)}_{k_s}[f_s]\cdots E^{(n_1)}_{k_1}[f_1]v$ is a non-zero element of $H^*(\fM(0,\Bw);\kk)$, so that the submodule generated by $v$ contains the point class.
\end{proof}

Let $x_1,\dots,x_n$ be the Chern roots of $\mathcal{S}$, and write an element $f\in \SymR_n$ as $f=\sum_{\mathbf{a}}f_{\mathbf{a}}x_1^{a_1}\cdots x_n^{a_n}$.  For $n=1$, write $F_{k,r}=F^{(1)}_k[x_1^r]$.
A flag variety computation shows the following.
\begin{lemma}\label{lem:geometric-divided}
  For $f\in \SymR_n$, we have
  \[\sum_{\mathbf{a}}f_{\mathbf{a}}\,F_{k,a_1}F_{k,a_2}\cdots F_{k,a_n}=(-1)^{\binom n2}\,n!\,F^{(n)}_k[f]\]
  as operators $H_*(\fL(\Bv-n\Be_k,\Bw);\Z)\to H_*(\fL(\Bv,\Bw);\Z)$.
\end{lemma}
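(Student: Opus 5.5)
The plan is to realize both sides as convolution with explicit cycles on $\fMo\times\fMt$, and to compare them through the complete flag bundle of the tautological bundle $\mathcal{S}$ on $\fP^{(n)}_k$.

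First I would compute the composite $F_{k,a_1}\cdots F_{k,a_n}$. Let $\tilde\fP\subset \fM(\Bv-n\Be_k,\Bw)\times\cdots\times\fM(\Bv,\Bw)$ be the iterated Hecke correspondence. It is the moduli of chains of subrepresentations $V^1=U^0\subset U^1\subset\cdots\subset U^n=V^2$, where each step adds one dimension at $k$. Consecutive steps are transverse in the sense needed for convolution: each $\fP^{(1)}_k$ is smooth, and the fiber product $\fP^{(1)}_k\times_{\fM}\fP^{(1)}_k$ is the smooth flag-type bundle. Hence the composite of convolutions is convolution with the pushforward of $x_1^{a_1}\cdots x_n^{a_n}\cap[\tilde\fP]$ to $\fMo\times\fMt$. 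Here $x_j=c_1(U^j_k/U^{j-1}_k)$.

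The forgetful map $\rho\colon\tilde\fP\to\fP^{(n)}_k$ records only $(U^0,U^n)$. It identifies $\tilde\fP$ with the complete flag bundle $\mathrm{Fl}(\mathcal{S})$ of the rank-$n$ bundle $\mathcal{S}=V^2_k/V^1_k$. The reason is that any subspace between $V^1_k$ and $V^2_k$ automatically gives a subrepresentation: the image of $\tau_k$ for $V^2$ lies in $V^1_k$, by the description of the fiber of $p_2$. Under this identification the $x_j$ become the Chern roots of $\mathcal{S}$, up to the sign convention for the tautological quotient line bundles.

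The operator identity then reduces to the pushforward formula for a complete flag bundle:
\[\rho_*\bigl(g(x_1,\dots,x_n)\cap[\mathrm{Fl}(\mathcal{S})]\bigr)=\bigl(\partial_{w_0}g\bigr)(\mathcal{S})\cap[\fP^{(n)}_k],\]
where $\partial_{w_0}$ is the longest divided-difference operator. For symmetric $f$ the map $g\mapsto\partial_{w_0}g$ is $\SymR_n$-linear, so $\partial_{w_0}f=f\cdot\partial_{w_0}(1)=0$ whenever $n\ge2$. Therefore the naive formula would give zero, and this is the main obstacle. The displayed identity $\sum_{\mathbf a} f_{\mathbf a}F_{k,a_1}\cdots F_{k,a_n}$ cannot mean a plain pushforward of $f\cap[\tilde\fP]$ of the same dimension. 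The dimensions must be reconciled: $\tilde\fP$ and $\fP^{(n)}_k$ have half-dimensional classes in the respective products, and they differ by the relative dimension $\binom n2$ of the flag bundle. This is absorbed by the degree shifts of the $F$'s, so the correct comparison is the convolution degree.

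I would resolve the dimension issue as follows. Note that $\fP^{(1)}_k\circ\fP^{(1)}_k$ has image of the expected dimension only after accounting for excess. The $n$-fold composite of Lagrangian correspondences $\fP^{(1)}_k$ is a Lagrangian cycle supported on the image of $\tilde\fP$ in the product. That image is $\fP^{(n)}_k$ together with the generically finite map $\rho\circ(\text{projection})$ of degree $n!$ onto it, the degree coming from the orderings of the $n$ added vectors. Concretely, in the Lagrangian setting $\tilde\fP$ should be taken as the fiber product of the $\fP^{(1)}_k$'s over the intermediate $\fM$'s, which has dimension equal to that of $\fP^{(n)}_k$. Its map to $\fP^{(n)}_k$ is then generically finite of degree $n!$, and is not the full flag bundle. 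The sign $(-1)^{\binom n2}$ comes from comparing orientations or excess normal bundles: the normal bundle of the fiber product inside the naive flag bundle contributes a top Chern class equal to the Vandermonde $\prod_{i<j}(x_i-x_j)$ up to sign. Using $\partial_{w_0}\bigl(f\prod_{i<j}(x_i-x_j)\bigr)=\pm n!\,f$ recovers exactly the factor $(-1)^{\binom n2}n!$.

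So the key step, which I expect to be hardest, is identifying the excess class of the iterated convolution as $\prod_{i<j}(x_j-x_i)$, with correct signs. I would first verify it on the open Grassmannian-bundle locus of \cref{lem:grassmann-bundle}, where everything is explicit and reduces to the Grassmannian $G(n,\C^m)$. I would then extend it to all of $\fP^{(n)}_k$, using that both sides are classes supported on $\fP^{(n)}_k$, which is irreducible in each component, so that the identity is determined on that dense open locus.
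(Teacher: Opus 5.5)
There is a genuine gap. You never supply the mechanism that produces $(-1)^{\binom n2}n!$, and two of your intermediate claims are false.

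\emph{The fiber product is the flag bundle.} With $P_j=\fP^{(1)}_k$ between consecutive $\fM_j=\fM(\Bv-(n-j)\Be_k,\Bw)$, the fiber product $P_1\times_{\fM_1}\cdots\times_{\fM_{n-1}}P_n$ is not of dimension $\dim\fP^{(n)}_k$, and it does not map generically finitely with degree $n!$ onto $\fP^{(n)}_k$. Scheme-theoretically it is the moduli space of chains, i.e.\ exactly the complete flag bundle $\tilde\fP=\mathrm{Fl}(\mathcal{S})$, of dimension $\dim\fP^{(n)}_k+\binom n2$.

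\emph{There is excess, not transversality.} Your first paragraph's transversality claim is therefore also wrong: the intersection defining the composite convolution has excess dimension $\binom n2$. Correspondingly, $n!$ is not the degree of a finite map; it is $\rho_*$ of the Euler class of $T_\rho$. The correct formula is $\tilde\delta^![P]=c_{\mathrm{top}}(E)\cap[\tilde\fP]$. Here $P=\prod_jP_j$, $\tilde\delta$ applies the diagonals of $\fM_1,\dots,\fM_{n-1}$, and $E$ is the cokernel of $\bigoplus_jTP_j\to\bigoplus_{j=1}^{n-1}T\fM_j$ restricted to $\tilde\fP$.

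\emph{The missing idea is a global identification of $E$.} Each $P_j$ is Lagrangian in $\fM_{j-1}\times\fM_j$. So the symplectic forms pair $E$ perfectly with the orthogonal complement of that image, which is the tangent space to the fibers of $\tilde\fP\to\fM_0\times\fM_n$, i.e.\ of $\rho$. Hence $E\cong T^*_\rho$ and $c_{\mathrm{top}}(E)=\prod_{i<j}(x_i-x_j)=(-1)^{\binom n2}\prod_{i<j}(x_j-x_i)$. The sign is exactly the cotangent-versus-tangent discrepancy, not something to be fixed ``up to orientation''. Then $\rho_*\bigl(g\prod_{i<j}(x_j-x_i)\cap[\tilde\fP]\bigr)=\bigl(\sum_w w(g)\bigr)\cap[\fP^{(n)}_k]$ yields $n!f$.

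\emph{Your density substitute fails.} You propose to compute on the Grassmannian-bundle locus $\fP^{(n)}_k\cap(\fMo\times\fM_{k;\le n}(\Bv,\Bw))$ of \cref{lem:grassmann-bundle} and extend by density. The reduction to a top-dimensional statement is fine: by the projection formula it suffices to show $\rho_*(c_{\mathrm{top}}(E)\cap[\tilde\fP])=(-1)^{\binom n2}n![\fP^{(n)}_k]$ in the top Chow group. But that locus need not meet every component of $\fP^{(n)}_k$, and it can even be empty.

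Take a single vertex with $\Bw=3$, $\Bv=3$, $n=2$. Then $\fM(3,3)$ is a point with $\varepsilon_k=3$, so $\fM_{k;\le 2}(3,3)=\emptyset$. Yet $\fP^{(2)}_k\cong\mathbb{P}^2$ and $\tilde\fP\cong\mathrm{Fl}(1,2;3)$. With $f=1$, both sides of the lemma send $[\fL(1,3)]$ to $-6$ times the point class. So the identity is nontrivial there but invisible on your open set.

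You therefore need the global Lagrangian identification $E\cong T^*_\rho$, or a separate argument on every stratum $\varepsilon_k(V^2)=r\ge n$.
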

\begin{proof}
  Let $\tilde{\fP}$ be the variety of chains $V^1=U^0\subset U^1\subset\cdots\subset U^n=V^2$ of subrepresentations with $\dim U^j=\Bv-(n-j)\Be_k$, where $(V^1,V^2)$ is a point of $\fP^{(n)}_k$.  Since $V^1$ and $V^2$ differ only at the vertex $k$, a chain of subspaces $V^1_k\subset U^1_k\subset\cdots\subset V^2_k$ is automatically a chain of subrepresentations: every arrow into $k$ has image in $V^1_k$, and every arrow out of $k$ lands in $V^2_j=V^1_j$ for $j\neq k$.  So the forgetful map $\rho\colon\tilde{\fP}\to\fP^{(n)}_k$ is the bundle of complete flags in $\mathcal{S}=V^2_k/V^1_k$.  It is smooth and proper, with fibers of dimension $\binom{n}{2}$, and the line bundles $U^j_k/U^{j-1}_k$ have first Chern classes $x_j$ which are Chern roots of $\rho^*\mathcal{S}$.

  Write $\fM_j=\fM(\Bv-(n-j)\Be_k,\Bw)$ for $0\leq j\leq n$, and $P_j=\fP^{(1)}_k\subset \fM_{j-1}\times\fM_j$.  The variety $\fM$ is a fine moduli space of stable framed representations, and $P_j$ is the moduli space of pairs $(V^{j-1}\subset V^j)$ of stable representations.  So the scheme-theoretic fiber product $P_1\times_{\fM_1}P_2\times_{\fM_2}\cdots\times_{\fM_{n-1}}P_n$ is the moduli space of chains $V^0\subset V^1\subset\cdots\subset V^n$, that is, it is $\tilde{\fP}$.  In particular this fiber product is smooth and reduced.

  Put $P=P_1\times\cdots\times P_n$, a closed subvariety of $\prod_{j=1}^{n}(\fM_{j-1}\times\fM_j)$, and let
  \[\tilde\delta\colon\fM_0\times\fM_1\times\cdots\times\fM_n\to\prod_{j=1}^{n}(\fM_{j-1}\times\fM_j),\qquad(m_0,\dots,m_n)\longmapsto\big((m_0,m_1),(m_1,m_2),\dots,(m_{n-1},m_n)\big).\]
We can identify $\tilde\delta=\mathrm{id}_{\fM_0}\times\delta_1\times\cdots\times\delta_{n-1}\times\mathrm{id}_{\fM_n}$, where $\delta_j\colon\fM_j\to\fM_j\times\fM_j$ is the diagonal.  So $\tilde\delta$ is a regular embedding with normal bundle $\bigoplus_{j=1}^{n-1}T\fM_j$, and $\tilde{\fP}=\tilde\delta^{-1}(P)$ is the fiber of $\tilde\delta$ over $P$.  The composite $F_{k,a_1}\cdots F_{k,a_n}$ applies the refined Gysin map of \cref{def:F} for the diagonal of each of $\fM_0,\dots,\fM_{n-1}$ in turn.  Those for $\fM_1,\dots,\fM_{n-1}$ are the maps $\delta_j^!$, and the one for $\fM_0$ is the one in the convolution below.  By the push-forward property and because refined Gysin maps of a composite of regular embeddings compose \cite[Th.~6.2(a), Th.~6.5]{fultonIntersectionTheory1998}, the composite is convolution, in the sense of \cref{def:F}, with the class
  \[\gamma=\rho'_*\big(x_n^{a_1}\cdots x_1^{a_n}\cap\tilde\delta^![P]\big)\in A_*(\fM_0\times\fM_n),\]
  where $\rho'\colon\tilde{\fP}\to\fM_0\times\fM_n$ is the projection.  The classes $x_j$ are pulled back from $P$, so they can be taken out of $\tilde\delta^!$.  Since $\tilde{\fP}$ and $P$ are smooth, the inclusion $\tilde{\fP}\hookrightarrow P$ is a regular embedding.  The excess intersection formula \cite[Th.~6.3]{fultonIntersectionTheory1998} gives $\tilde\delta^![P]=c_{\mathrm{top}}(E)\cap[\tilde{\fP}]$, where the \emph{excess bundle} $E$ is the cokernel of
  \[\bigoplus_{j=1}^nTP_j\longrightarrow\bigoplus_{j=1}^{n-1}T\fM_j,\qquad(u_j)_j\longmapsto\big(d\pi^{(j)}_2u_j-d\pi^{(j+1)}_1u_{j+1}\big)_j,\]
  restricted to $\tilde{\fP}$ (here $\pi^{(j)}_1,\pi^{(j)}_2$ are the projections of $P_j$).  The rank of $E$ is the difference between the expected dimension of the fiber product and $\dim\tilde{\fP}$.

  We identify $E$ using the symplectic forms.  Each $P_j$ is Lagrangian in $\fM_{j-1}\times\fM_j$ for the form $\omega_{j-1}\boxminus\omega_j$ \cite[Th.~5.7]{nakajimaQuiverVarieties1998}.  The symplectic forms give a perfect pairing between $E$ and the orthogonal complement of the image of the map above.  A tuple $(m_1,\dots,m_{n-1})$, with $m_j\in T\fM_j$, is orthogonal to the image exactly when each $(m_{j-1},m_j)$ is orthogonal to $TP_j$, where we put $m_0=m_n=0$; as $P_j$ is Lagrangian, this says $(m_{j-1},m_j)\in TP_j$.  Thus the orthogonal complement is the tangent space to the fiber of $\tilde{\fP}\to\fM_0\times\fM_n$.  The image of $\tilde{\fP}$ in $\fM_0\times\fM_n$ is $\fP^{(n)}_k$, so this fiber is the fiber of $\rho$, and $E\cong T^*_\rho$.  Hence $c_{\mathrm{top}}(E)=c_{\binom n2}(T_\rho^*)$.  The bundle $T_\rho$ has a filtration with line bundle quotients $\mathrm{Hom}(U^i_k/U^{i-1}_k,U^j_k/U^{j-1}_k)$ for $i<j$, with first Chern classes $x_j-x_i$, so
  \[c_{\mathrm{top}}(E)=\prod_{i<j}(x_i-x_j)=(-1)^{\binom n2}\prod_{i<j}(x_j-x_i).\]

  Integrating out the complete flag, $\rho_*\big(g\cdot\prod_{i<j}(x_j-x_i)\cap[\tilde{\fP}]\big)=\big(\sum_{w\in \Sym_n}w(g)\big)\cap[\fP^{(n)}_k]$ for every polynomial $g$ in the $x_j$ \cite[\S 14.2]{fultonIntersectionTheory1998}: $\rho_*$ is the divided difference $\partial_{w_0}$ and $\prod_{i<j}(x_j-x_i)$ is the Euler class of $T_\rho$.  (For $n=2$ and $g=1$ this says $\rho_*(x_2-x_1)=\chi(\mathbb{P}^1)=2$.)  Since $f$ is symmetric, $\sum_{\mathbf{a}}f_{\mathbf{a}}\sum_{w}w(x_n^{a_1}\cdots x_1^{a_n})=n!\,f$.  Hence $\sum_{\mathbf{a}}f_{\mathbf{a}}F_{k,a_1}\cdots F_{k,a_n}=(-1)^{\binom n2}n!\,F^{(n)}_k[f]$ as desired.
\end{proof}

\section{Comparison with the KLR algebras}\label{sec:comparison}
\subsection{The KLR side}\label{sec:KLR-side}
We will require the following objects from the world of categorified Lie algebras:
\begin{enumerate}
\item The nilHecke algebra $\NH_n$ on $n$ strands over $\Z$, generated by  $y_1,\dots,y_n,\psi_1,\dots,\psi_{n-1}$.  This algebra is isomorphic to the matrix algebra $\operatorname{Mat}_{n!}(\SymR_n)$ over the symmetric polynomial ring $\SymR_n=\Z[y_1,\dots,y_n]^{\Sym_n}$, with the Morita equivalence induced by the idempotent
\[e_n=y_1^{n-1}y_2^{n-2}\cdots y_{n-1}\psi_{w_0}\in \NH_n.\]
\item The KLR algebra $\mathsf{R}_\alpha$ \cite{khovanovCategorificationQuantum2010,Rou2KM} is the $\Z$-algebra generated by idempotents $e(\mathbf{i})$, for sequences $\mathbf{i}=(i_1,\dots,i_n)\in I^n$ with $\sum_{t=1}^{n}\al_{i_t}=\alpha$, together with $y_1,\dots,y_n$ and $\psi_1,\dots,\psi_{n-1}$,
subject to the KLR relations for the polynomials  $Q_{ij}(u,v)=(u-v)^{\#\{j\to i\}}(v-u)^{\#\{i\to j\}}$ for $i\neq j$ in $I$, and $Q_{ii}(u,v)=0$, as in \cite[\S 1.1]{Webcenter}.
Since signs can have a subtle effect on biadjunctions here, we note that, for $i\neq j$ in $I$ and $a_{ij}$ the Cartan matrix entries,
\[Q_{ij}(u,v)=r_{ij}(u-v)^{-a_{ij}}=r_{ji}(v-u)^{-a_{ij}},\quad
-a_{ij}=\#\{j\to i\}+\#\{i\to j\},\quad r_{ij}=(-1)^{\#\{i\to j\}}.\]
In particular $r_{ij}=(-1)^{a_{ij}}r_{ji}$, which is condition (11) of \cite{shanCenterQuiver2017}.

\item The cyclotomic quotient $\Rn[\alpha]=\mathsf{R}_\alpha/\big\langle y_1^{\langle\al_{i_1}^\vee,\Lambda\rangle}e(\mathbf{i})\big\rangle$ for a dominant weight $\Lambda$.  We write \[\mathsf{R}^\Lambda=\bigoplus_{\alpha\in Q^+} \Rn[\alpha]\quad\text{ and }\quad \Rn=\bigoplus_{\alpha\in Q^+_n}\Rn[\alpha],\] so that $Z(\mathsf{R}^\Lambda)=\bigoplus_\alpha Z(\Rn[\alpha])$ and $\Tr(\mathsf{R}^\Lambda)=\bigoplus_\alpha\Tr(\Rn[\alpha])$.  When we regard $\Rn[\alpha]$ as attached to an object of the 2-category $\tU$ below, that object has the weight $\mu=\Lambda-\alpha$. For any field $\kk$, we use $\mathsf{R}_\alpha(\kk)$ and $\mathsf{R}_\alpha^\Lambda(\kk)$ to denote the KLR algebra and the cyclotomic KLR algebra over $\kk$ respectively.
\item The Kac--Moody 2-category $\tU$ of \cite{khovanovCategorificationQuantum2010,Rou2KM}.  We use the cyclic presentation of \cite{beliakovaCyclicityCategorified2016}, since we require a cyclic biadjunction.
\item The divided power 1-morphisms in $\tU$ are defined by $\mathcal{E}_i^{(n)}=(\mathcal{E}_i^n,e_n)$ in the Karoubi envelope \cite{khovanovExtendedGraphical2012}.  The Morita equivalence of the nilHecke algebra referenced above \cite[\S\S 2.5, 4.2]{khovanovExtendedGraphical2012} exactly shows that $\mathcal{E}_i^n\cong (\mathcal{E}_i^{(n)})^{\oplus n!}$ for $n\geq 1$.
\item We use the usual convention of letting $\tU$ be a graded category, and using $\tU^*$ for the 2-category obtained from $\tU$ by forgetting the grading: the same objects and 1-morphisms with degree shifts ignored, and 2-morphisms of all degrees.  Traces (below) are taken in $\tU^*$ and $\Tr(\Rn[\alpha])$ is the trace of the ungraded algebra.   This 2-category has a representation sending $\mu=\Lambda-\alpha$ (as an object) to the category  $\Rn[\alpha]\operatorname{-pmod}$ of ungraded projective modules, with $\mathcal{E}_i$ and $\mathcal{F}_i$ acting by restriction and induction functors.
\end{enumerate}

\begin{lemma}\label{lem:free}
  The groups $\mathsf{R}^\Lambda, Z(\mathsf{R}^\Lambda)$ are free abelian.
\end{lemma}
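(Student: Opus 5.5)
The plan is to show that each $\Rn[\alpha]$ is a free $\Z$-module of finite rank, and then deduce that $Z(\mathsf{R}^\Lambda)$ is free as well, being a subgroup of a free abelian group.

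For the first step I would invoke the basis theorem for cyclotomic KLR algebras. In the symmetric (geometric) case, Kang--Kashiwara's proof of the cyclotomic categorification theorem shows that $\Rn[\alpha]$ is a free module of finite rank over the base ring. This holds over $\Z$ because the relevant exact sequences relating $\mathcal{E}_i\mathcal{F}_i$ and $\mathcal{F}_i\mathcal{E}_i$ on cyclotomic quotients are split over any commutative ring. One could instead cite Li's graded cellular basis or the Hu--Mathas basis in type $A$, but for a general graph without loops the Kang--Kashiwara freeness statement is the right citation. Concretely, it says that $e(\mathbf{i})\Rn[\alpha]e(\mathbf{j})$ is a free $\Z$-module of finite rank, so $\Rn[\alpha]$ is free and $\mathsf{R}^\Lambda=\bigoplus_\alpha\Rn[\alpha]$ is a direct sum of free abelian groups, hence free.

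For the center I would argue directly. $Z(\Rn[\alpha])$ is a subgroup of the finitely generated free abelian group $\Rn[\alpha]$, and subgroups of free abelian groups are free. Summing over $\alpha$ gives $Z(\mathsf{R}^\Lambda)=\bigoplus_\alpha Z(\Rn[\alpha])$, which is therefore free.

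It is also worth noting, for later base-change arguments, that $Z(\Rn[\alpha])$ is saturated in $\Rn[\alpha]$. If $mz$ is central with $m\neq 0$, then $m[z,x]=0$ for all $x$, and since $\Rn[\alpha]$ is torsion-free this forces $[z,x]=0$. So $\Rn[\alpha]/Z(\Rn[\alpha])$ is torsion-free, hence free, and $Z$ is a direct summand. This is not needed for the lemma itself.

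The only substantive step is the integral freeness of $\Rn[\alpha]$, and I expect to rely on the citation rather than reprove it. The main point to check is that the cited freeness is genuinely over $\Z$ and holds for arbitrary loop-free graphs with the paper's choice of $Q_{ij}$, including multiple edges and the case $e=2$ in type $A^{(1)}_{1}$. If necessary, I would reduce to Kang--Kashiwara's result, which is stated over an arbitrary commutative ring for any symmetrizable Cartan datum with generic $Q_{ij}$ satisfying their invertibility condition on leading coefficients; here that condition holds because $r_{ij}=\pm1$.
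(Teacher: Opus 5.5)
Your proof is correct and follows the paper's argument: first get freeness of each $\Rn[\alpha]$ from a citation, then note that $Z(\Rn[\alpha])$ is free as a subgroup of a finite-rank free abelian group. The only difference is the citation in the first step. The paper deduces torsion-freeness from the field-independence of $\dim\Rn[\alpha](\kk)$ (Webster, Hu--Shi), whereas you cite Kang--Kashiwara's finite-rank projectivity over an arbitrary commutative base ring. That gives projective rather than free in general, but over the PID $\Z$ it means free.
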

\begin{proof}
The freeness of $\mathsf{R}^\Lambda$ follows immediately from the fact that the dimension of each $\Rn[\alpha]$ is independent of the field (\cite[Cor. 3.22]{Webmerged}, \cite{HuShiGD24}).  The center $Z(\Rn[\alpha])$ is a subgroup of a free abelian group $\Rn[\alpha]$ of finite rank and hence is free too.
\end{proof}
Note that this does not imply that $\dim Z(\mathsf{R}_\alpha^\Lambda(\kk))$ is
independent of the field $\kk$, since taking the center does not commute with arbitrary base change.

The trace of a $\Z$-linear category $\mathcal{C}$ is
\[\Tr(\mathcal{C})=\Big(\bigoplus_{X}\operatorname{End}_{\mathcal{C}}(X)\Big)\Big/\operatorname{span}\{fg-gf\},\]
where $f\colon X\to Y$ and $g\colon Y\to X$ range over all pairs of morphisms, and we write $[f]$ for the class of an endomorphism $f$.  The trace is unchanged by passing to the additive Karoubi envelope, and for an algebra $A$, we have $\Tr(A):=\Tr(A\operatorname{-proj})\cong A/[A,A]$.  For a linear 2-category, horizontal composition makes the traces of the Hom-categories into a linear category, and $\Tr$ is 2-functorial \cite[\S\S 2.5--2.6]{beliakovaCurrentAlgebras2017}.  So a 2-representation of $\tU$ on a category $\mathcal{C}=\bigoplus_\mu\mathcal{C}_\mu$ makes $\Tr(\mathcal{C})=\bigoplus_\mu\Tr(\mathcal{C}_\mu)$ a module over $\Tr(\tU^*)$, by the formula $[a]\cdot [g]=[a\circ u(g)]$ for $a\in\operatorname{End}(u)$.

Dually, we can consider the centers $Z(\mathcal{C}_{\mu})=\operatorname{End}(\operatorname{Id}_{\mathcal{C}_{\mu}})$ of the $\Z$-linear categories $\mathcal{C}_{\mu}$.  Since we have fixed a duality on $\tU$, which is cyclic (left and right duals coincide), we have an action of $\Tr(\tU^*)$ on $Z(\mathcal{C})=\bigoplus_\mu Z(\mathcal{C}_\mu)$ by ``wrapping'' \cite[\S 9]{beliakovaCurrentAlgebras2017}.

In particular $\Tr(\tU^*)$ acts on $\Tr(\mathsf{R}^\Lambda)$.  The same operators were constructed on the cocenters of cyclotomic quotients by Shan, Varagnolo and Vasserot \cite[\S\S 3.2.5--3.2.6]{shanCenterQuiver2017}.  By \cite[Th.~3.18 and Rem.~3.19]{Webmerged}, we have a
canonical duality between cocenters and centers induced by closing up elements of the cyclotomic quotient.  Under this duality, the induced action of $\Tr(\tU^*)$ on $Z(\mathcal{C})$ is dual to that on $\Tr(\mathcal{C})$ (\cref{lem:adjunction}).

The trace of $\tU^*$ is a version of a current algebra.  For $\Gamma$ of finite type ADE it is the idempotented current algebra $\dot U(\mathfrak{g}[t])$ \cite[Th.~B]{beliakovaCurrentAlgebras2017}.  Outside of finite type, the relations it satisfies are those of the loop algebra of \cref{def:loop} below (\cref{prop:loop-action}).

For $f\in \SymR_n$, let
\[\mathsf{E}^{(n)}_i[f]=[f(y)\,e_n]\in \Tr(\tU^*),\]
the class of the endomorphism $f(y)e_n$ of $\mathcal{E}_i^n1_\mu$, and similarly $\mathsf{E}_{i,r}=\mathsf{E}^{(1)}_i[y^r]$; define $\mathsf{F}^{(n)}_i[f]$ and $\mathsf{F}_{i,r}$ in the same way, using $\mathcal{F}_i^n$.  By the Morita equivalence discussed before, we have an analogue of \cref{lem:geometric-divided} in the trace of $\tU^*$.
\begin{lemma}\label{lem:KLR-divided}
  For $f\in \SymR_n$, we have $\sum_{\mathbf{a}}f_{\mathbf{a}}\,\mathsf{E}_{i,a_1}\cdots\mathsf{E}_{i,a_n}=n!\,\mathsf{E}^{(n)}_i[f]$ in $\Tr(\tU^*)$.
\end{lemma}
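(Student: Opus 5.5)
The plan is to work entirely inside the nilHecke algebra $\NH_n$, which acts on $\mathcal{E}_i^n1_\mu$ in $\tU$, and to use the trace relation $[ab]=[ba]$ in $\Tr(\tU^*)$. First, observe that the product $\mathsf{E}_{i,a_1}\cdots\mathsf{E}_{i,a_n}$ in $\Tr(\tU^*)$ is the horizontal composite of the classes $[y^{a_j}]$ on single strands. Hence it equals the class $[y_1^{a_1}\cdots y_n^{a_n}]$ of the endomorphism of $\mathcal{E}_i^n1_\mu$ given by that monomial in dots; the exact ordering convention only permutes the exponents. Summing over $\mathbf{a}$ with coefficients $f_{\mathbf a}$, the left-hand side becomes $[f(y)\cdot 1_n]$, where $1_n$ is the identity of $\mathcal{E}_i^n$. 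Since $f$ is symmetric, the convention issue disappears.

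Next, we use the Morita equivalence $\NH_n\cong\operatorname{Mat}_{n!}(\SymR_n)$. Under it, the identity $1_n$ decomposes as a sum of $n!$ mutually orthogonal idempotents, each conjugate to $e_n$. Concretely, we choose $a_w,b_w\in\NH_n$ for $w\in\Sym_n$ with $b_wa_{w'}=\delta_{w,w'}e_n$ and $\sum_w a_wb_w=1_n$. Such elements exist by \cite[\S\S 2.5, 4.2]{khovanovExtendedGraphical2012}; this is exactly the statement $\mathcal{E}_i^n\cong(\mathcal{E}_i^{(n)})^{\oplus n!}$. Since $f(y)$ is symmetric, it is central in $\NH_n$. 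The trace relation then gives
\[
[f(y)]=\sum_w[f(y)a_wb_w]=\sum_w[b_wf(y)a_w]=\sum_w[f(y)b_wa_w]=n!\,[f(y)e_n]=n!\,\mathsf{E}^{(n)}_i[f].
\]
The class $[f(y)e_n]$ is taken as an endomorphism of the object $(\mathcal{E}_i^n,e_n)$ of the Karoubi envelope. Because the trace is insensitive to passing to the Karoubi envelope, this agrees with the class of $f(y)e_n$ regarded as an endomorphism of $\mathcal{E}_i^n$.

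The main point to check carefully is the first step. We need that the multiplication in $\Tr(\tU^*)$, which is induced by horizontal composition, sends $[y^{a_1}]\otimes\cdots\otimes[y^{a_n}]$ to $[y_1^{a_1}\cdots y_n^{a_n}]$ with no signs. Signs are a real concern, because the geometric analogue \cref{lem:geometric-divided} carries a factor $(-1)^{\binom n2}$.

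Here the answer is clean. Horizontal composition of dots on parallel strands is simply the tensor product of the dot endomorphisms, with no crossings involved. In the geometric case the sign came from the excess bundle; here no such bundle appears, and the argument above is purely algebraic inside $\NH_n$. The only remaining check is that $f(y)$ is central in $\NH_n$. This is standard: symmetric polynomials form the center of the nilHecke algebra, since $\psi_j$ commutes with any polynomial symmetric in $y_j,y_{j+1}$.
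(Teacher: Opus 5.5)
Your proposal is correct and follows the paper's proof essentially verbatim. Both arguments identify the left-hand side with $[f(y)]$ via horizontal composition, and both write $1=\sum_w a_wb_w$ with $b_wa_w=e_n$ using the Morita equivalence $\NH_n\cong\operatorname{Mat}_{n!}(\SymR_n)$; the centrality of $f$ in $\NH_n$ together with the trace relation then gives $[f(y)]=n!\,[f(y)e_n]$.
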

\begin{proof}
  Since the product in $\Tr(\tU^*)$ is induced by horizontal composition, the left-hand side is the class $[f(y)]$ of the endomorphism $f(y_1,\dots,y_n)$ of $\mathcal{E}_i^n1_\mu$.  The algebra $\NH_n$ is isomorphic to the matrix algebra $\operatorname{End}_{\SymR_n}(\Z[y_1,\dots,y_n])$ of rank $n!$ over $\SymR_n$, and $e_n$ corresponds to a rank one idempotent \cite{khovanovCategorificationQuantum2010,Rou2KM}.  Thus $1=\sum_{j=1}^{n!}u_ju'_j$ with $u'_ju_j=e_n$ for all $j$.  Since $f$ is central in $\NH_n$,
  \[[f]=\sum_j[fu_ju'_j]=\sum_j[u'_jfu_j]=\sum_j[fu'_ju_j]=n![fe_n].\qedhere\]
\end{proof}

As in \cite[\S 1]{Webcenter} (see also \cite[\S 9]{beliakovaCurrentAlgebras2017}), a cyclic biadjunction on $\tU$ makes $Z(\mathsf{R}^\Lambda)$ a module over $\Tr(\tU^*)$, with $[a]$ acting by the convolution $a\star -$.

\subsection{Comparison of the actions}\label{sec:comparison-actions}
Fix a bilinear form $\beta\colon\Z I\times\Z I\to\Z/2\Z$ with $\beta(\gamma,\delta)+\beta(\delta,\gamma)\equiv\langle\gamma,\delta\rangle\pmod 2$, as in \cite{Webcenter}.  Varagnolo's operators \cite[(4.1)]{varagnoloQuiverVarieties2000}, in the normalization of \cite[(3.1)]{Webcenter}, are
\[x^+_{k,r}=(-1)^{\beta(\mu-\Lambda,\al_k)}(\pi_1)_*\big(z^r\cup\pi_2^*(-)\big),\qquad x^-_{k,r}=(-1)^{\beta(\al_k,\mu-\Lambda)}(\pi_2)_*\big(z^r\cup\pi_1^*(-)\big)\]
on $H^*(\fM_{\Lambda,\alpha};\C)$, where $\pi_1,\pi_2$ are the projections of the Hecke correspondence $\fP^{(1)}_k\subset\fM_{\Lambda,\alpha-\al_k}\times\fM_{\Lambda,\alpha}$ and $z$ is the first Chern class of the quotient $V'_k/V_k$ of the larger by the smaller tautological bundle.  These are our $p_1,p_2$ and $z=c_1(\mathcal{S})$, and the pushforward $\pi_*$ of \cite[\S 3.1]{Webcenter}, defined through Poincar\'e duality on the smooth varieties $\fP^{(1)}_k$ and $\fM$, is the Gysin map in \eqref{eq:pull-push}.  Comparing with \eqref{eq:pull-push}, we thus have

\begin{equation}\label{eq:varagnolo-signs}
  x^+_{k,r}=(-1)^{\beta(\mu-\Lambda,\al_k)}E_{k,r},\qquad x^-_{k,r}=(-1)^{\beta(\al_k,\mu-\Lambda)}F_{k,r}
\end{equation}

on $H^*(\fM_{\Lambda,\alpha};\C)$.
For $\mu$ a weight of the irreducible representation $V(\Lambda)$ of highest weight $\Lambda$, $k\in I$ and $n\geq 1$, define signs
\[\sigma^+(\mu,k,n)=(-1)^{\binom n2}\prod_{t=0}^{n-1}(-1)^{\beta(\mu+t\al_k-\Lambda,\,\al_k)},\qquad \sigma^-(\mu,k,n)=(-1)^{\binom n2}\prod_{t=0}^{n-1}(-1)^{\beta(\al_k,\,\mu-t\al_k-\Lambda)}.\]
Apart from the factor $(-1)^{\binom n2}$ of \cref{lem:geometric-divided}, these are the products of the signs in \eqref{eq:varagnolo-signs} along a string of $n$ raising (respectively lowering) operators starting at weight $\mu$.  Let $\beta'$ be chosen as in the proof of \cite[Th.~3.4]{Webcenter}, so that the $\beta$-geometric and $\beta'$-Cautis--Lauda dualities agree, and let $\tU^*$ act on $Z(\mathsf{R}^\Lambda)$ by convolution with respect to the $\beta'$-Cautis--Lauda duality.
\begin{lemma}\label{lem:intertwine}
  For all $k$, $n$ and $f\in\SymR_n$, the map $\phi$ of \cite{Webcenter} intertwines $\sigma^+(\mu,k,n)E^{(n)}_k[f]$ with $\mathsf{E}^{(n)}_k[f]\star-$, and $\sigma^-(\mu,k,n)F^{(n)}_k[f]$ with $\mathsf{F}^{(n)}_k[f]\star-$, on the summand of weight $\mu$.
\end{lemma}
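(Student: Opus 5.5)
We reduce to the case $n=1$, where the statement is exactly what \cite{Webcenter} proves, and then pass to general $n$ using the two divided-power formulas \cref{lem:geometric-divided} and \cref{lem:KLR-divided}. Since $\phi$ is only defined over $\C$, the division by $n!$ that this produces is harmless.

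\textbf{Case $n=1$.} The main theorem of \cite[Th.~3.4]{Webcenter} shows that $\phi$ intertwines Varagnolo's operators $x^\pm_{k,r}$ with $\mathsf{E}_{k,r}\star-$ and $\mathsf{F}_{k,r}\star-$. This uses the choice of $\beta'$ that makes the $\beta$-geometric and $\beta'$-Cautis--Lauda dualities agree. By \eqref{eq:varagnolo-signs}, $x^+_{k,r}=(-1)^{\beta(\mu-\Lambda,\al_k)}E_{k,r}$ on the summand of weight $\mu$, and for $n=1$ this sign is exactly $\sigma^+(\mu,k,1)$. The lowering case is the same with $\sigma^-(\mu,k,1)$. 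By linearity in $f\in\SymR_1=\Z[x]$, this settles $n=1$.

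\textbf{General $n$.} Compose $n$ raising operators starting at weight $\mu$. The $t$-th operator acts on the summand of weight $\mu+t\al_k$, so it carries the sign $(-1)^{\beta(\mu+t\al_k-\Lambda,\al_k)}$. The $n=1$ case therefore shows that $\phi$ intertwines
\[\prod_{t=0}^{n-1}(-1)^{\beta(\mu+t\al_k-\Lambda,\al_k)}\sum_{\mathbf a}f_{\mathbf a}E_{k,a_1}\cdots E_{k,a_n}\quad\text{with}\quad \sum_{\mathbf a}f_{\mathbf a}\mathsf{E}_{k,a_1}\cdots\mathsf{E}_{k,a_n}\star-.\]
On the KLR side we use that the action of $\Tr(\tU^*)$ on the center is an algebra action. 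Then \cref{lem:KLR-divided} turns the right-hand operator into $n!\,\mathsf{E}^{(n)}_k[f]\star-$.

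On the geometric side we need the cohomological analogue of \cref{lem:geometric-divided}, namely
\[\sum_{\mathbf a}f_{\mathbf a}E_{k,a_1}\cdots E_{k,a_n}=(-1)^{\binom n2}n!\,E^{(n)}_k[f].\]
This follows from \cref{lem:geometric-divided} for the $F$'s by taking adjoints with \cref{lem:adjoint} under the perfect pairing of \cref{lem:pairing}. Since the pairing is perfect, the identity holds on $H^*(\fM;\Z)$ and hence over $\C$.

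Comparing the two sides, $\phi$ intertwines $n!\,\sigma^+(\mu,k,n)E^{(n)}_k[f]$ with $n!\,\mathsf{E}^{(n)}_k[f]\star-$. Over $\C$ we may divide by $n!$, which gives the claim. The lowering operators are handled in the same way, using the signs $(-1)^{\beta(\al_k,\mu-t\al_k-\Lambda)}$ and the $F$-version of \cref{lem:KLR-divided}, which holds by the same argument.

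\textbf{Main obstacle.} The delicate step is the sign and indexing bookkeeping in the geometric divided-power identity. \cref{lem:geometric-divided} uses the reversed monomial $x_n^{a_1}\cdots x_1^{a_n}$, and the adjoint composite reverses the order of the operators. We must check that after symmetrization these reversals cancel, and that the factor $(-1)^{\binom n2}$ appears once and is absorbed into $\sigma^\pm$. We must also confirm that the composite of $n$ sign factors matches the product in the definition of $\sigma^\pm$. This requires checking that $\sigma^+$ is evaluated starting from the weight of the source of $E^{(n)}_k$, consistent with the convention in \eqref{eq:varagnolo-signs}. Symmetry of $f$ should make the ordering issue immaterial, since the formula sums over all $w\in\Sym_n$.
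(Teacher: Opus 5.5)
Your proposal is correct and follows the paper's proof essentially step for step. You take the $n=1$ case from \cite[Th.~3.4]{Webcenter} via \eqref{eq:varagnolo-signs}, multiply the signs along the $\al_k$-string starting at $\mu$, transpose \cref{lem:geometric-divided} to cohomology (for the lowering case, its analogue with the factors exchanged), compare with \cref{lem:KLR-divided} using that convolution is an action, and divide by $n!$ in characteristic zero, noting that symmetry of $f$ absorbs the order reversal coming from transposition.
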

\begin{proof}
  For $n=1$, \cite[Th.~3.4]{Webcenter} (combining Corollaries~2.8 and 3.3 there) says that $\phi$ intertwines Varagnolo's operators $x^\pm_{k,r}$ with $\mathsf{E}_{k,r}\star-$ and $\mathsf{F}_{k,r}\star-$; by \eqref{eq:varagnolo-signs}, $x^\pm_{k,r}=\sigma^\pm(\mu,k,1)E_{k,r}$, respectively $\sigma^\pm(\mu,k,1)F_{k,r}$.  A product of $n$ such twisted operators starting at weight $\mu$ is $(-1)^{\binom n2}\sigma^\pm(\mu,k,n)$ times the corresponding untwisted product, since $\sigma^\pm(\mu,k,n)$ contains the extra factor $(-1)^{\binom n2}$.  Transposing \cref{lem:geometric-divided} and its analogue with the factors exchanged gives $\sum_{\mathbf{a}}f_{\mathbf{a}}E_{k,a_n}\cdots E_{k,a_1}=(-1)^{\binom n2}n!E^{(n)}_k[f]$, and similarly for $F$, over $\Q$.  So $\sum_{\mathbf{a}}f_{\mathbf{a}}x^\pm_{k,a_n}\cdots x^\pm_{k,a_1}=n!\,\sigma^\pm(\mu,k,n)E^{(n)}_k[f]$, respectively with $F$.  Comparing with \cref{lem:KLR-divided}, using that convolution is an action of $\Tr(\tU^*)$, and dividing by $n!$, which is possible over $\Q$, gives the claim for all $n$.
\end{proof}

\subsection{The loop algebra}\label{sec:loop-algebra}
Let $(a_{ij})_{i,j\in I}$ be the Cartan matrix of $\Gamma$, so that $-a_{ij}$ is the number of edges joining $i\neq j$.
\begin{definition}\label{def:loop}
  Let $L\mathfrak{g}$ be the Lie algebra over $\Q$ generated by elements $x^\pm_{i,r}$ and $\xi_{i,r}$, for $i\in I$ and $r\geq 0$, subject to the following relations for all $i,j\in I$ and $r,s\geq 0$:
  \begin{align}
    [\xi_{i,r},\xi_{j,s}]&=0, \tag{L1}\\
    [\xi_{i,r},x^\pm_{j,s}]&=\pm a_{ij}\,x^\pm_{j,r+s}, \tag{L2}\\
    [x^+_{i,r},x^-_{j,s}]&=\delta_{ij}\,\xi_{i,r+s}, \tag{L3}\\
    [x^\pm_{i,r},x^\pm_{i,s}]&=0, \tag{L4}\\
    \sum_{p=0}^{m}(-1)^p\binom{m}{p}\,[x^\pm_{i,r+m-p},x^\pm_{j,s+p}]&=0 &&\text{for }i\neq j,\ m=-a_{ij}, \tag{L5}\\
    [x^\pm_{i,r_1},[x^\pm_{i,r_2},\dots,[x^\pm_{i,r_m},x^\pm_{j,s}]\cdots]]&=0 &&\text{for }i\neq j,\ m=1-a_{ij},\ r_1,\dots,r_m\geq 0. \tag{L6}
  \end{align}
\end{definition}
In terms of the series $x^\pm_i(z)=\sum_{r\geq 0}x^\pm_{i,r}z^{-r-1}$, relation (L5) says that the coefficient of $z^{-r-1}w^{-s-1}$ in $(z-w)^{-a_{ij}}[x^\pm_i(z),x^\pm_j(w)]$ vanishes for all $r,s\geq 0$.  By (L4) the operators $\operatorname{ad}x^\pm_{i,r}$ commute with each other, so the order of $r_1,\dots,r_m$ in (L6) does not matter.

\begin{remark}\label{rem:loop}
  This is the Lie algebra $L\mathfrak{g}$ of \cite[Def.~3.24]{shanCenterQuiver2017}.  If $\Gamma$ is of finite type, it is the current algebra $\mathfrak{g}[t]$ \cite[Th.~3.1]{beliakovaCurrentAlgebras2017}, but this fails outside of finite type.  For the cyclic quiver with $e\geq 4$ vertices, $U(L\mathfrak{g})$ is Guay's affine Yangian at $\lambda=\beta=0$ \cite[Def.~2.3, \S 2]{guayAffineYangians2007}, which is the enveloping algebra of the universal central extension of $\mathfrak{sl}_e[u^{\pm1},v]$.
\end{remark}

For $n\geq 1$ and $f=\sum_{\mathbf{a}}f_{\mathbf{a}}x_1^{a_1}\cdots x_n^{a_n}\in\SymR_n$, let
\[x^{\pm(n)}_i[f]=\frac{1}{n!}\sum_{\mathbf{a}}f_{\mathbf{a}}\,x^\pm_{i,a_1}\cdots x^\pm_{i,a_n}\in U(L\mathfrak{g}).\]
By relation (L4), if $f$ is the monomial symmetric function with $k_r$ variables raised to the power $r$ for each $r$, then $x^{\pm(n)}_i[f]=\prod_r (x^\pm_{i,r})^{k_r}/k_r!$.
\begin{definition}\label{def:U-Z}
For each $n\geq 1$ and $i\in I$, let $\mathbf{X}^{\pm}_{n,i}$ be a copy of the abelian group $\SymR_n$, whose element corresponding to $f\in\SymR_n$ we denote by the formal symbol $\mathbf{x}^{\pm(n)}_i[f]$.
Let $\mathcal{F}$ be the free associative ring (tensor algebra) on the abelian group $\bigoplus_{n,i}\mathbf{X}^{\pm}_{n,i}$, and let $\mathcal{F}_\kk=\mathcal{F}\otimes\kk$.

  Let $U_\Z\subset U(L\mathfrak{g})$ be the image of the map $\mathcal{F}\to U(L\mathfrak{g})$ sending each symbol $\mathbf{x}^{\pm(n)}_i[f]$ to $x^{\pm(n)}_i[f]$. For a field $\kk$, we let $U_\kk=U_\Z\otimes\kk$.
\end{definition}

Note that $U_\Q=U_\Z\otimes\Q$ is $U(L\mathfrak{g})$ itself: the map $U_\Z\otimes\Q\to U(L\mathfrak{g})$ is injective since $U_\Z$ is a subgroup of the $\Q$-vector space $U(L\mathfrak{g})$, and surjective since its image contains the elements $x^\pm_{i,r}=x^{\pm(1)}_i[x_1^r]$, which generate $U(L\mathfrak{g})$ by (L3).  Since $U_\Z$ is the image of $\mathcal{F}$, if $U(L\mathfrak{g})$ acts on $M\otimes\Q$ for a torsion-free abelian group $M$, and the induced action of $\mathcal{F}$ preserves $M$, then this action factors through $U_\Z$, and so induces an action of $U_\kk$ on $M\otimes\kk$.

The ring $\mathcal{F}_\kk$ acts on three modules, compatibly with base change in $\kk$:
\begin{itemize}
\item on $H^*(\fM_{\Lambda};\kk)$, with $\mathbf{x}^{+(n)}_k[f]$ acting by $\sigma^+(\mu,k,n)E^{(n)}_k[f]$ and $\mathbf{x}^{-(n)}_k[f]$ by $\sigma^-(\mu,k,n)F^{(n)}_k[f]$ on the summand of weight $\mu$;
\item on $Z(\mathsf{R}^\Lambda(\kk))$, with $\mathbf{x}^{+(n)}_k[f]$ and $\mathbf{x}^{-(n)}_k[f]$ acting by $\mathsf{E}^{(n)}_k[f]\star-$ and $\mathsf{F}^{(n)}_k[f]\star-$, using the $\beta'$-Cautis--Lauda duality;
\item on $\Tr(\mathsf{R}^\Lambda(\kk))$, with the same elements acting by $[g]\mapsto[f(y)e_n\circ \mathcal{E}^n_k(g)]$, respectively $[g]\mapsto[f(y)e_n\circ \mathcal{F}^n_k(g)]$.  This action of $\Tr(\tU^*)$ on the cocenter is induced by horizontal composition \cite[\S 2.6]{beliakovaCurrentAlgebras2017}.
\end{itemize}

Fix the $\beta'$-Cautis--Lauda duality.  For a 1-morphism $v\colon\nu\to\mu$ of $\tU$ and a module $Q$ over the summand $\Rn[\alpha'](\kk)$ with $\nu=\Lambda-\alpha'$, let
\[\operatorname{ptr}_v\colon \operatorname{End}(v(Q))\to\operatorname{End}(Q),\qquad h\mapsto \epsilon'_v\circ v^\star(h)\circ\iota_v,\]
where $\iota_v\colon\mathrm{id}_\nu\to v^\star v$ and $\epsilon'_v\colon v^\star v\to\mathrm{id}_\nu$ are the unit and counit of the duality; diagrammatically, $\operatorname{ptr}_v$ closes off the strands of $v$ to the right of the green string $Q$:
\[\operatorname{ptr}_v(h)\;=\;
\begin{tikzpicture}[baseline=(box.center),very thick,scale=.8]
  \draw[green!70!black] (-1.6,-1.6) node[below] {$Q$} -- (-1.6,1.6);
  \draw (0,-.35) -- (0,-.9) arc (180:360:.6 and .5) -- (1.2,.9) arc (0:180:.6 and .5) -- (0,.35);
  \node[draw,rectangle,fill=white,minimum width=.9cm,minimum height=.7cm] (box) at (0,0) {$h$};
  \node[left] at (0,1.1) {$v$};
  \node[right] at (1.2,0) {$v^\star$};
  \node[above] at (.6,1.4) {$\epsilon'_v$};
  \node[below] at (.6,-1.4) {$\iota_v$};
\end{tikzpicture}\]

Here the module $Q$ is drawn as a green string on the left-hand boundary; this diagram gives an endomorphism of $Q$ by using bubble slides to drag the bubble past the strands of the cyclotomic quotient acting on $Q$, and then setting positive degree bubbles at far left to zero.
Since the duality is a biadjunction compatible with composition, $\operatorname{ptr}_{v}\circ\operatorname{ptr}_{w}=\operatorname{ptr}_{wv}$.
Let $\mu:=\Lambda-\alpha$. Define
\[t_\mu\colon \Rn[\alpha](\kk)\to\kk,\qquad t_\mu(h)=\sum_{\mathbf{i}}\operatorname{ptr}_{\mathcal{F}_{\mathbf{i}}}\big(e(\mathbf{i})he(\mathbf{i})\big)\in\operatorname{End}(\kk)=\kk.\]
Let $\langle z,[g]\rangle_\mu=t_\mu(zg)$ for $z\in Z(\Rn[\alpha](\kk)), [g]\in \Tr(\Rn[\alpha](\kk))$.
\begin{lemma}\label{lem:adjunction}\hfill
  \begin{enumerate}
  \item The form $t_\mu$ is a symmetrizing form on $\Rn[\alpha](\kk)$, so $\langle-,-\rangle_\mu$ is a perfect pairing between $Z(\Rn[\alpha](\kk))$ and $\Tr(\Rn[\alpha](\kk))$.
  \item For $u\colon\mu\to\nu$, $a\in\operatorname{End}(u)$, $z\in Z(\Rn[\alpha](\kk))$ and $f\in\Tr(\Rn[\alpha'](\kk))$ with $\nu=\Lambda-\alpha'$, we have $\langle a\star z,f\rangle_\nu=\langle z,[a^\star]\cdot f\rangle_\mu$.
  \end{enumerate}
\end{lemma}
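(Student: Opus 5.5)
For part (1), I would start from the fact that $t_\mu$ is the closing-up map whose perfectness is \cite[Th.~3.18 and Rem.~3.19]{Webmerged}, and reduce the two claims to that theorem. I would check symmetry, $t_\mu(gh)=t_\mu(hg)$, diagrammatically. For $h\in e(\mathbf{i})\Rn[\alpha]e(\mathbf{j})$ and $g\in e(\mathbf{j})\Rn[\alpha]e(\mathbf{i})$, the element $\operatorname{ptr}_{\mathcal{F}_{\mathbf{i}}}(hg)$ is a closed diagram. The strands of $h$ and $g$ form a cylinder closed off by the cups and caps $\iota,\epsilon'$ of the cyclic biadjunction, and cyclicity \cite{beliakovaCyclicityCategorified2016} lets us rotate $g$ around the closure to the other side of $h$. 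Since $\operatorname{ptr}_{\mathcal{F}_{\mathbf{i}}}(e(\mathbf{i})hge(\mathbf{i}))$ equals $\operatorname{ptr}_{\mathcal{F}_{\mathbf{j}}}(e(\mathbf{j})ghe(\mathbf{j}))$, this gives $t_\mu(hg)=t_\mu(gh)$. Hence $t_\mu$ factors through $\Tr(\Rn[\alpha](\kk))$.

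Nondegeneracy of the bilinear form $(g,h)\mapsto t_\mu(gh)$ on $\Rn[\alpha](\kk)$ is exactly the content of \cite[Th.~3.18]{Webmerged}. That theorem identifies this form, up to the unit bubbles of the chosen duality, with the symmetric Frobenius form of the cyclotomic quotient. I would note that changing to the $\beta'$-Cautis--Lauda duality only rescales the cups and caps by signs, so perfectness is unaffected. Since Lemma~\ref{lem:free} gives freeness, the form is perfect over $\Z$ and hence over every $\kk$. A symmetrizing form $t$ on a finite-dimensional algebra $A$ then gives a perfect pairing $Z(A)\otimes A/[A,A]\to\kk$. Indeed, $z\mapsto t(z\,\cdot)$ is an isomorphism $A\to A^*$, and under it $Z(A)$ maps onto $([A,A])^\perp=(A/[A,A])^*$, because $t(z[a,b])=t([z,a]b)$.

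For part (2), I would write $f=[g]$ and use that $a\star z$ is defined by wrapping $a$ around $z$ through the biadjunction. Then $\langle a\star z,[g]\rangle_\nu=t_\nu\bigl((a\star z)g\bigr)$ is a single closed diagram: the wrapped $a$ surrounds $z$, $g$ sits beside it, and everything is closed off by $\operatorname{ptr}_{\mathcal{F}_{\mathbf{i}}}$. The identity $\operatorname{ptr}_v\circ\operatorname{ptr}_w=\operatorname{ptr}_{wv}$ lets me reorganize the closure. I would first close the strands of $u$, isotoping the wrapping cups and caps of $a$ so that $a$ becomes its mate $a^\star$ acting on $u^\star$ and composed with $u^\star(g)$. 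What remains is the trace $t_\mu\bigl(z\,(a^\star\circ u^\star(g))\bigr)$, and this is $\langle z,[a^\star]\cdot[g]\rangle_\mu$ by definition of the trace action. Finally, I would check that both sides are well defined on classes in $\Tr$, using part (1) and centrality of $z$.

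The main obstacle is sign and orientation bookkeeping. I need that the mate $a^\star$ appearing after isotopy is the same one used to define the $\Tr(\tU^*)$-action, that the cyclic biadjunction has no extra bubble factors when rotating through $\mathcal{F}_{\mathbf{i}}$ versus $\mathcal{E}_{\mathbf{i}}$, and that the normalization of bubbles at the far left (setting positive-degree bubbles to zero) is compatible on both weights $\mu$ and $\nu$. I would handle this by checking it on generators $a=y,\psi$ and the unit/counit, relying on the cyclicity theorem of \cite{beliakovaCyclicityCategorified2016} for the $\beta'$ duality.
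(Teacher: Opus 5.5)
Your proposal is correct and follows essentially the same route as the paper. For (1), nondegeneracy comes from Webster's Frobenius trace (the twisted-duality signs do not affect it) and symmetry from cyclicity of the $\beta'$-Cautis--Lauda duality. For (2), it is the same diagrammatic isotopy, sliding $a$ around the cap to $a^\star$ on $u^\star$ so that the enclosed region evaluates to $a\star z$.

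You also spell out the standard step identifying $Z(A)$ with $[A,A]^\perp$, which the paper leaves implicit. Your appeal to freeness over $\Z$ is unnecessary: the paper applies Webster's theorem directly over $\kk$, and freeness alone would not upgrade rational nondegeneracy to perfectness.
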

\begin{proof}
  (1) The form $t_\mu$ differs from the Frobenius trace of \cite[Th.~3.18]{Webmerged} only by the signs of the twisted duality, which are constant on each summand $e(\mathbf{i})\Rn[\alpha] e(\mathbf{j})$, so it is again a Frobenius form.  It is symmetric because the $\beta'$-Cautis--Lauda duality is cyclic; compare \cite[Rem.~3.19]{Webmerged}.

  For (2), write $f=[g]$ with $g\in\operatorname{End}(Q)$ for a projective module $Q$ of weight $\nu$.  The closed diagram computing $t_\nu\big((a\star z)g\big)$ can be rewritten as:
  \[\begin{tikzpicture}[baseline=(az.center),very thick]
    \draw[green!70!black] (0,-1.4) -- (0,1.4) arc (180:0:.9 and .8) -- (1.8,-1.4) arc (0:-180:.9 and .8);
    \node[green!70!black,right] at (1.8,0) {$Q$};
    \node[draw,fill=white,minimum width=.7cm,minimum height=.6cm] at (0,.85) {$g$};
    \node[draw,fill=white,minimum height=.6cm] (az) at (0,0) {$a\star z$};
  \end{tikzpicture}
  \quad=\quad
  \begin{tikzpicture}[baseline=(z.center),very thick]
    \draw[green!70!black] (0,-1.4) -- (0,1.4) arc (180:0:2 and .8) -- (4,-1.4) arc (0:-180:2 and .8);
    \node[green!70!black,right] at (4,0) {$Q$};
    \draw (1.3,-1.4) -- (1.3,1.4) arc (180:0:.55 and .3) -- (2.4,-1.4) arc (0:-180:.55 and .3);
    \node[right] at (2.4,0) {$u$};
    \node[right] at (1.3,-.95) {$u^\star$};
    \node[draw,fill=white,minimum width=.7cm,minimum height=.6cm] (a) at (1.3,.85) {$a^\star$};
    \node[draw,fill=white,minimum width=.7cm,minimum height=.6cm] (g) at (0,.85) {$g$};
    \node[draw,fill=white,minimum width=2.3cm,minimum height=.6cm] (z) at (.65,0) {$z$};
    \draw[dashed,rounded corners=6pt,thin] (.65,1.9) -- (3,1.9) -- (3,-1.9) -- (.65,-1.9) -- (.65,-.45) -- (-.75,-.45) -- (-.75,.45) -- (.65,.45) -- cycle;
    \node[left,font=\small] at (-.8,-.45) {$a\star z$};
  \end{tikzpicture}\]
  Here the coupon $z$ stands for the action of the center $Z(\Rn[\alpha](\kk))$ on $u^\star Q$, and the coupon $a^\star$ for the induced endomorphism of this module; by cyclicity of the duality, the coupon $a^\star$ on the strand $u^\star$ is the coupon $a$ slid around the cap onto the strand $u$.  So, if we evaluate the portion of the diagram inside the dashed boundary, we obtain the action of $a\star z$ on $Q$, giving the equality of the two diagrams.  The right-hand side is the definition of $\langle z,[a^\star]\cdot f\rangle_\mu$, so the claim follows.
\end{proof}

\begin{proposition}\label{prop:loop-action}\hfill
  \begin{enumerate}
  \item For every field $\kk$, the operators $\mathsf{E}_{i,r}$ and $\mathsf{F}_{i,r}$, acting as $x^+_{i,r}$ and $x^-_{i,r}$, satisfy relations (L1)--(L6) over $\kk$ on $\Tr(\mathsf{R}^\Lambda(\kk))$ and on $Z(\mathsf{R}^\Lambda(\kk))$.
  \item The action of $\mathcal{F}$ on $Z(\mathsf{R}^\Lambda)$ factors through $U_\Z$.  Hence $U_\kk$ acts on $Z(\mathsf{R}^\Lambda)\otimes\kk$ for every field $\kk$.
  \end{enumerate}
\end{proposition}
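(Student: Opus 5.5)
Part (1) follows by checking the relations on the cocenter first and then transporting them to the center by duality.

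\emph{Relations on the cocenter.} The action of $\Tr(\tU^*)$ on $\Tr(\mathsf{R}^\Lambda(\kk))$ is induced by horizontal composition, so it is an algebra action. Hence it suffices to show that the elements $\mathsf{E}_{i,r},\mathsf{F}_{i,r}$, with $\xi_{i,r}$ defined by (L3), satisfy (L1)--(L6) in $\Tr(\tU^*)$ itself. These relations hold over $\Z$ and are proved by local diagrammatic manipulations in $\tU$:
\begin{itemize}
\item (L4) and (L5) come from the nilHecke and KLR relations on two upward strands, passed to the trace. For (L4) we slide dots through a crossing and use that $[\psi a]=[a\psi]$. For (L5) we use that the trace of a double crossing produces $Q_{ij}(y_1,y_2)$, expanded as $r_{ij}(y_1-y_2)^{-a_{ij}}$.
\item (L6) follows from the trace of the Serre relation, which holds in $\tU$ by the result of Khovanov--Lauda and Rouquier.
\item (L3) and (L1)--(L2) come from the $\mathcal{E}\mathcal{F}$ versus $\mathcal{F}\mathcal{E}$ decomposition together with bubble slides. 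The element $\xi_{i,r}$ is the class of a combination of bubbles, and (L2) is the bubble-slide formula.
\end{itemize}
This is the computation of \cite[\S\S 3--4]{beliakovaCurrentAlgebras2017}, with cocenter operators as in \cite[\S 3.2]{shanCenterQuiver2017}. I would cite these, checking that none of their arguments divides by an integer, so that the relations hold in $\Tr(\tU^*)$ over $\Z$ and hence after base change to any $\kk$. Since the relations are stated without dividing, this reduces to checking that the relevant identities are integral.

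\emph{Relations on the center.} For $Z(\mathsf{R}^\Lambda(\kk))$, use \cref{lem:adjunction}. The pairing $\langle-,-\rangle_\mu$ is perfect and identifies $a\star-$ with the transpose of $[a^\star]\cdot-$. Under the cyclic duality, $\mathsf{E}_{i,r}^\star$ is $\mathsf{F}_{i,r}$, up to the fixed signs of the $\beta'$-twisting, and conversely. Wrapping is compatible with composition, since $\operatorname{ptr}_v\circ\operatorname{ptr}_w=\operatorname{ptr}_{wv}$. Therefore the operators on the center are the transposes of the operators on the cocenter, with the roles of $\pm$ exchanged and products reversed. Each relation (L1)--(L6) is carried to a relation of the same system under this anti-involution. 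For example, (L3) is mapped to itself because $\xi$ is self-dual, and (L5)--(L6) are mapped to their opposite-sign versions. It follows that the relations hold on $Z(\mathsf{R}^\Lambda(\kk))$. The main obstacle here is the sign bookkeeping under the twisted duality. I would deal with it by noting that the signs are constant on each weight space, so they rescale each generator by a weight-dependent sign, and all relations are homogeneous.

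\emph{Part (2).} By \cref{lem:free}, $M=Z(\mathsf{R}^\Lambda)$ is a free abelian group. I claim that $M\otimes\Q=Z(\mathsf{R}^\Lambda(\Q))$, since taking centers commutes with flat base change. By part (1) over $\Q$, $U(L\mathfrak{g})$ acts on $M\otimes\Q$, with $x^\pm_{i,r}$ acting by $\mathsf{E}_{i,r}\star-$ and $\mathsf{F}_{i,r}\star-$. \cref{lem:KLR-divided} gives
\[
n!\,\mathsf{E}^{(n)}_i[f]=\sum_{\mathbf{a}}f_{\mathbf{a}}\,\mathsf{E}_{i,a_1}\cdots\mathsf{E}_{i,a_n},
\]
and the product on the right is compatible with the convolution action. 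Dividing by $n!$ in the torsion-free group $M\otimes\Q$ shows that $x^{\pm(n)}_i[f]$ acts by $\mathsf{E}^{(n)}_i[f]\star-$, respectively $\mathsf{F}^{(n)}_i[f]\star-$. This operator is defined integrally and preserves $M$. So the action of $\mathcal{F}$ on $M$ agrees with the composite $\mathcal{F}\to U_\Z\subset U(L\mathfrak{g})$ acting on $M\otimes\Q$. Because $M$ is torsion-free, an element of $\mathcal{F}$ with zero image in $U_\Z$ acts by zero on $M$, so the action factors through $U_\Z$. Tensoring with $\kk$ then gives the action of $U_\kk$ on $Z(\mathsf{R}^\Lambda)\otimes\kk$.
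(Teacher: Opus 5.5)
Your overall plan is the paper's. You check the relations on the cocenter, transfer them to the center through the perfect pairing of \cref{lem:adjunction} and the anti-involution $[a]\mapsto[a^\star]$, and for (2) use the $n!$ argument via \cref{lem:KLR-divided} and torsion-freeness of $Z(\mathsf{R}^\Lambda)$; part (2) matches the paper exactly.

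\textbf{The sign step on the center fails as written.} Weight-homogeneity of the relations does not make weight-dependent rescalings harmless, because the monomials of a single relation pass through different intermediate weights. Suppose the operator of $x^+_{k,r}$ is multiplied by $\epsilon_k(\mu)$ on the weight-$\mu$ summand. Then on that summand $x^+_{i,a}x^+_{j,b}$ picks up $\epsilon_i(\mu+\alpha_j)\epsilon_j(\mu)$, while $x^+_{j,b}x^+_{i,a}$ picks up $\epsilon_j(\mu+\alpha_i)\epsilon_i(\mu)$. For $\epsilon_k(\mu)=(-1)^{\beta(\mu-\Lambda,\alpha_k)}$ these differ by $(-1)^{a_{ij}}$. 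So the commutators in (L5) become anticommutators whenever $a_{ij}$ is odd; this is exactly why the signs in \eqref{eq:varagnolo-signs} are needed on the geometric side.

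The correct observation is that no such sign arises. For a fixed adjunction $u\dashv u^\star$, any two choices of unit and counit differ by an automorphism $\theta$ of $u^\star$, and the corresponding mates of $a\in\operatorname{End}(u)$ differ by conjugation by $\theta$. Hence the class $[a^\star]\in\Tr(\tU^*)$, which is all that enters \cref{lem:adjunction}(2), cannot acquire a $\mu$-dependent sign from the $\beta'$-twist. Since the dot is self-dual, as the paper notes, $[a]\mapsto[a^\star]$ exchanges $\mathsf{E}_{i,r}$ and $\mathsf{F}_{i,r}$. The anti-involution $x^\pm_{i,r}\mapsto x^\mp_{i,r}$, $\xi_{i,r}\mapsto\xi_{i,r}$ of $L\mathfrak{g}$ then carries the relations over, as in the paper.

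\textbf{The cocenter step claims more than you establish.} You assert (L1)--(L6) in $\Tr(\tU^*)$ itself over $\Z$, which is more than the paper uses. If you had that, the duality step would be superfluous: convolution makes $Z(\mathsf{R}^\Lambda(\kk))$ a $\Tr(\tU^*)$-module, so the relations would hold on the center directly. But your sketch does not prove it.
\begin{itemize}
\item The categorified Serre relation is a statement about 1-morphisms. Transporting endomorphisms with arbitrary numbers of dots across it does not by itself yield (L6) for all $r_1,\dots,r_m,s$.
\item \cite{beliakovaCurrentAlgebras2017} identifies the trace only in finite type ADE. Here $\Gamma$ may have multiple edges, so $m=1-a_{ij}$ can exceed $2$.
\end{itemize}
The proposition only needs the action on $\Tr(\mathsf{R}^\Lambda(\kk))$ for each field $\kk$, and the paper takes this directly from \cite[Th.~3.25]{shanCenterQuiver2017}. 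You should do the same, rather than resting part (1) on an unverified integral statement about $\Tr(\tU^*)$.
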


\begin{proof}
  (1) For $\Tr(\mathsf{R}^\Lambda(\kk))$, this is \cite[Th.~3.25]{shanCenterQuiver2017}.

  For the center, the existence of this action is noted in \cite[Rem. 3.41]{shanCenterQuiver2017}, but with slightly different conventions, and an implicit assumption of finite type and characteristic 0.  More explicitly,
  since the $\beta'$-Cautis--Lauda duality is cyclic \cite{Webcenter}, $[a]\mapsto[a^\star]$ is an anti-involution of $\Tr(\tU^*)$.  It exchanges $\mathsf{E}_{i,r}$ and $\mathsf{F}_{i,r}$, since the dot is self-dual up to sign.
By \cref{lem:adjunction}, $[a]\star-$ is the transpose of the action of $[a^\star]$ on the cocenter, under a perfect pairing.  The anti-involution $x^\pm_{i,r}\mapsto x^\mp_{i,r}$, $\xi_{i,r}\mapsto\xi_{i,r}$ of $L\mathfrak{g}$ preserves the relations so they hold on the center as well.

  (2) By (1) with $\kk=\Q$, $U(L\mathfrak{g})$ acts on $Z(\mathsf{R}^\Lambda(\Q))=Z(\mathsf{R}^\Lambda)\otimes\Q$, the equality holding since the center commutes with flat base change.  By \cref{lem:KLR-divided}, applied to $\mathsf{E}$ and $\mathsf{F}$, and since convolution is an action of $\Tr(\tU^*)$, the element $x^{\pm(n)}_k[f]$ acts by $\mathsf{E}^{(n)}_k[f]\star-$, respectively $\mathsf{F}^{(n)}_k[f]\star-$, which preserves $Z(\Rn[\alpha])$.  The group $Z(\Rn[\alpha])$ is torsion-free, since it is a subgroup of $\Rn[\alpha]$, which is free by \cref{lem:free}.  Thus, we obtain an action of $U_{\Z}$ which base-changes to an action of $U_{\kk}$.
\end{proof}
\subsection{The integral map and injectivity}
\label{sec:injective}

Let $S=H^*_{G_{\Bv}}(\mathrm{pt};\Z)=\bigotimes_i\Z[c_{i;1},\dots,c_{i;v_i}]$ be the equivariant cohomology of a point, with $c_{i;j}$ the $j$th Chern class of the tautological bundle on $BG_{\Bv}$.
Let $\kappa_g\colon S\to H^*(\fM(\Bv,\Bw);\Z)$ be the Kirwan map, sending $c_{i;j}$ to the $j$th Chern class of the tautological bundle $V_i$, and let $\kappa_a\colon S\to Z(\Rn[\alpha])$ be the algebra homomorphism sending $c_{i;j}$ to $e_{i;j}$.  Let $\kappa_{g,\C}\colon S\otimes_{\Z}\C\to H^*(\fM(\Bv,\Bw);\C)$ be the composition of the complexification of $\kappa_g$ with the natural isomorphism $H^*(\fM(\Bv,\Bw);\Z)\otimes_{\Z}\C\cong H^*(\fM(\Bv,\Bw);\C)$. Let
$\kappa_{a,\C}\colon S\otimes_{\Z}\C\to Z(\Rn[\alpha](\C))$ be the composition of the complexification of $\kappa_a$ with the natural isomorphism $Z(\Rn[\alpha])\otimes\C\cong Z(\Rn[\alpha](\C))$.
\begin{theorem}[\mbox{\cite[Th.~1.2]{mcgertyKirwanSurjectivity2018}}]\label{thm:MN}
  The map $\kappa_g$ is surjective.
\end{theorem}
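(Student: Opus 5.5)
We do not reprove this result; it is \cite[Th.~1.2]{mcgertyKirwanSurjectivity2018}. The plan below is how one would establish it directly, following the strategy of McGerty and Nevins.

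The starting point is that $\fM(\Bv,\Bw)$ is the GIT quotient $\mu^{-1}(0)^{ss}/G_{\Bv}$. Here $\mu$ is the moment map on the representation space $\mathbf{M}(\Bv,\Bw)$ of the doubled framed quiver, and stability is taken with respect to the character $\det$. Since $G_{\Bv}$ acts freely on the stable locus, which equals the semistable locus, we have $H^*(\fM(\Bv,\Bw);\Z)\cong H^*_{G_{\Bv}}(\mu^{-1}(0)^{ss};\Z)$. Moreover $\mu^{-1}(0)$ is a cone, preserved by the scaling action and contracting to the origin. Hence $H^*_{G_{\Bv}}(\mu^{-1}(0);\Z)\cong H^*_{G_{\Bv}}(\mathrm{pt};\Z)=S$, and under these identifications $\kappa_g$ becomes the restriction map
\[H^*_{G_{\Bv}}(\mu^{-1}(0);\Z)\to H^*_{G_{\Bv}}(\mu^{-1}(0)^{ss};\Z).\]
It therefore suffices to prove that restriction to the open semistable locus is surjective.

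For this, I would stratify the unstable locus $\mu^{-1}(0)\setminus\mu^{-1}(0)^{ss}$ by the Kempf--Ness/Hesselink stratification $\{S_\beta\}$ attached to the character. Each stratum has the form $G_{\Bv}\times_{P_\beta}Y_\beta$. Here $Y_\beta$ retracts onto a fixed-point component $Z_\beta$ for a Levi subgroup $L_\beta$. Concretely, $Z_\beta$ is given by the Harder--Narasimhan type filtration: it is a product of smaller moment-map fibres for the subquotient dimension vectors. One then adds the strata one at a time in an order compatible with closure. The long exact sequences in equivariant (Borel--Moore) cohomology reduce surjectivity to showing that each sequence splits into short exact sequences; this is equivariant perfection of the stratification. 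By the Atiyah--Bott lemma, perfection follows if, for each stratum, the equivariant Euler class of the normal data is not a zero divisor in $H^*_{L_\beta}(Z_\beta;\Z)$. An inductive form of the same statement applies to the smaller quiver data appearing in $Z_\beta$.

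I expect the main obstacle to be this perfection step. There are two reasons. First, $\mu^{-1}(0)$ is singular, so the strata have no honest normal bundles. Second, we need the statement over $\Z$, whereas most Kirwan-type arguments work over $\Q$. My first approach would follow McGerty--Nevins and replace $\mu^{-1}(0)$ by the smooth quotient stack together with its derived structure. Equivalently, one works on the smooth space $\mathbf{M}(\Bv,\Bw)$ and uses the Koszul resolution of $\mu^{-1}(0)$, so that the relevant normal complexes are honest $L_\beta$-equivariant complexes of vector bundles. The Euler class is then a ratio of products of nonzero weights in $H^*_{T}$. One checks that it is a non-zero-divisor by restricting to the torus fixed points of $Z_\beta$, using induction: the cohomology of the smaller pieces is torsion-free and concentrated in even degrees, which is the same purity/property (S) input as in \cref{lem:pairing}(2). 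This torsion-freeness is what allows the Atiyah--Bott argument to run integrally rather than only rationally. Once each stratum contributes injectively, the long exact sequences split, the restriction to $\mu^{-1}(0)^{ss}$ is surjective, and hence $\kappa_g$ is surjective.
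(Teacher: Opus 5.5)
Citing \cite[Th.~1.2]{mcgertyKirwanSurjectivity2018} is all the paper does here, so your first sentence matches it. The sketch you attach, however, is not McGerty and Nevins's argument. As a direct proof it has a genuine gap, exactly at the step you flag.

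Your reduction is fine: $\kappa_g$ is the restriction $H^*_{G_\Bv}(\mu^{-1}(0);\Z)\to H^*_{G_\Bv}(\mu^{-1}(0)^{ss};\Z)$. But equivariant perfection of the Kempf--Ness stratification of the singular cone $\mu^{-1}(0)$ is precisely what is not available for hyperk\"ahler quotients, and none of your remedies supplies it:
\begin{itemize}
\item A derived enhancement has the same underlying space as its classical truncation. So it gives no Thom isomorphism $H^*_{G_\Bv}(S_{\leq\beta},S_{<\beta})\cong H^{*-2d_\beta}_{G_\Bv}(S_\beta)$ for the strata.
\item The ``Euler class'' of a two-term Koszul normal complex is, as you say yourself, only a ratio in a localization. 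The Atiyah--Bott non-zero-divisor criterion therefore has nothing to apply to.
\item Working on the smooth space $\mathbf{M}(\Bv,\Bw)$ gives surjectivity onto $H^*(\mathbf{M}(\Bv,\Bw)^{ss}/G_\Bv)$. Restricting from there to the closed subvariety $\fM(\Bv,\Bw)$ is then just the original problem again.
\item The proposed induction has no input. The Levi pieces $Z_\beta$ include moment-map fibres for unframed subquotient dimension vectors, which are singular stacks rather than smooth quiver varieties. Property (S) from \cref{lem:pairing}(2) says nothing about them.
\end{itemize}

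The missing idea is that McGerty and Nevins bypass stratifications altogether. They realize $\fM(\Bv,\Bw)$ as an open subvariety of a projective moduli space $\overline{\fM}$ of representations of an enlarged quiver. They then resolve the structure sheaf of the graph of $\iota\colon\fM\hookrightarrow\overline{\fM}$ by a Koszul complex whose terms are built from exterior products $A\boxtimes B$, with $A$ made of tautological bundles on $\fM$. This construction is recalled in the proof of \cref{lem:Kirwan-K}.

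Using the graph as a correspondence, one sees that $\iota^*\colon A_*(\overline{\fM})\to A_*(\fM)$ lands in the span of integral polynomials in tautological Chern classes. Restriction to an open subset is surjective on Chow groups, and the cycle map is an integral isomorphism by \cref{lem:pairing}(2). Together these give surjectivity of $\kappa_g$ over $\Z$, with no perfection statement needed.
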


\begin{lemma}\label{lem:phi-Z}
  There is a unique ring homomorphism $\phi_\Z\colon H^*(\fM_{\Lambda,\alpha};\Z)\to Z(\Rn[\alpha])$ with $\phi_\Z(c_{i;j})=e_{i;j}$.  It is injective, and $\phi_\Z\otimes \C$ is the map $\phi$ of \cite{Webcenter}.  It is a homomorphism of $\mathcal{F}$-modules, for the actions described after \cref{def:U-Z}.
\end{lemma}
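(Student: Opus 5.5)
Uniqueness is immediate from \cref{thm:MN}: $H^*(\fM_{\Lambda,\alpha};\Z)$ is generated as a ring by the $c_{i;j}$, so a ring map is determined by their images. For existence, I would show that $\ker\kappa_g\subset\ker\kappa_a$, so that $\kappa_a$ factors through $H^*(\fM_{\Lambda,\alpha};\Z)\cong S/\ker\kappa_g$. Take $s\in\ker\kappa_g$. Then $\kappa_{g,\C}(s\otimes1)=0$. The map $\phi$ of \cite{Webcenter} is a ring homomorphism $H^*(\fM_{\Lambda,\alpha};\C)\to Z(\Rn[\alpha](\C))$ sending Chern classes of tautological bundles to the corresponding symmetric polynomials in dots, so $\phi\circ\kappa_{g,\C}=\kappa_{a,\C}$. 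Here one must check the normalization: that $\phi(c_{i;j})=e_{i;j}$ exactly, with no sign twist from $\beta$. I would verify this from the construction in \cite{Webcenter}, or absorb the possible sign $(-1)^j$ into an automorphism if necessary. Hence $\kappa_a(s)\otimes1=0$ in $Z(\Rn[\alpha])\otimes\C$. Since $Z(\Rn[\alpha])$ is free by \cref{lem:free}, we get $\kappa_a(s)=0$. This defines $\phi_\Z$, and by construction $\phi_\Z\otimes\C$ agrees with $\phi$ on generators, hence everywhere.

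For the $\mathcal{F}$-module statement, both $H^*(\fM_\Lambda;\Z)$ (free by \cref{lem:pairing}) and $Z(\mathsf{R}^\Lambda)$ (free by \cref{lem:free}) embed into their complexifications. It therefore suffices to check that $\phi$ intertwines the $\mathcal{F}$-actions after tensoring with $\C$, and this is exactly \cref{lem:intertwine}: $\mathbf{x}^{\pm(n)}_k[f]$ acts geometrically by $\sigma^\pm(\mu,k,n)E^{(n)}_k[f]$ (resp.\ $F$) and algebraically by $\mathsf{E}^{(n)}_k[f]\star-$ (resp.\ $\mathsf{F}$). We should also note that the geometric operators preserve integral cohomology, since they are defined by integral pull-push.

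Injectivity is the main step, and I would deduce it from cyclicity rather than from \cite{Webcenter}. Since $H^*(\fM_{\Lambda,\alpha};\Z)$ is free, it suffices to show injectivity of $\phi_\Z\otimes\Q$, and indeed of $\phi_\Z\otimes\kk$ for any field. I would use $\kk=\Q$, or $\C$, where $\phi_\Z\otimes\C=\phi$. Let $K$ be the kernel of $\bigoplus_\alpha\phi_\Z\otimes\kk$ on $H^*(\fM_\Lambda;\kk)$. Because $\phi$ is an $\mathcal{F}_\kk$-module map, $K$ is stable under all the operators $E^{(n)}_k[f]$; the signs $\sigma^\pm$ are harmless. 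If $K\neq0$, then \cref{lem:cyclic-cohomology} forces the point class $1\in H^*(\fM(0,\Bw);\kk)$ to lie in $K$. But $\phi(1)=1\in Z(\Rn[0](\kk))=\kk$, which is nonzero. Hence $K=0$, and torsion-freeness gives injectivity of $\phi_\Z$.

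The obstacle I expect is the compatibility between $\phi$ and the Chern class normalizations, together with making sure the action on $\Z$-forms is genuinely preserved by $\phi_\Z$ before tensoring. Both reduce to checks against \cite{Webcenter} and to freeness.
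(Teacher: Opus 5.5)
Your existence, uniqueness and $\mathcal{F}$-equivariance arguments are the same as the paper's. Both factor $\kappa_a$ through Kirwan surjectivity by comparing kernels inside the complexifications, using freeness from \cref{lem:pairing} and \cref{lem:free}. Both then get equivariance from \cref{lem:intertwine}, because the integral operators preserve both lattices.

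The real difference is injectivity. The paper simply cites the characteristic-zero injectivity of $\phi$ from \cite[Th.~3.4]{Webcenter}. You instead derive it from \cref{lem:cyclic-cohomology}, and your argument is valid:
\begin{itemize}
\item Since $\phi$ is the direct sum of the $\phi_\alpha$, the kernel of $\phi_\Z\otimes\C$ is weight-graded.
\item It is stable under the operators $\sigma^+(\mu,k,n)E^{(n)}_k[f]$, hence under the $E^{(n)}_k[f]$, because the signs are constant on each weight space.
\item So if the kernel were nonzero it would contain $1\in H^*(\fM(0,\Bw))$, which maps to $1\neq 0$.
\item Torsion-freeness of $H^*(\fM_{\Lambda,\alpha};\Z)$ then gives injectivity of $\phi_\Z$.
\end{itemize}
There is no circularity, because you establish $\mathcal{F}$-equivariance before using it.

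This is exactly the argument the paper gives later for \cref{thm:A}(1) over an arbitrary field, specialized to characteristic zero. Your route makes the lemma independent of the injectivity statement in \cite{Webcenter}. It is not independent of \cite{Webcenter} altogether, since the existence of $\phi$ and \cref{lem:intertwine} still come from there. It also shows that the later field-by-field argument already contains the characteristic-zero case. The paper's citation is shorter and keeps the cyclicity argument in one place, where it is genuinely needed: in positive characteristic, where no result from \cite{Webcenter} is available.

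Your hedge about absorbing a possible sign $(-1)^j$ is unnecessary, and it would not prove the lemma as stated. The paper takes $\phi_\C\circ\kappa_{g,\C}=\kappa_{a,\C}$ on the nose from \cite[Prop.~3.5]{Webcenter}. A twist by the degree automorphism would break the assertion $\phi_\Z\otimes\C=\phi$. It would also change the intertwining with the $\mathcal{F}$-actions, since $E^{(n)}_k[f]$ would pick up $\deg f$-dependent signs.
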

\begin{proof}
 By \cref{thm:MN}, to construct a homomorphism, it suffices to check that the kernel of $\kappa_g$ is contained in the kernel of $\kappa_a$.  By \cite[Prop.~3.5]{Webcenter}, there is a homomorphism $\phi_{\C}\colon H^*(\fM_{\Lambda};\C)\to Z(\mathsf{R}^\Lambda(\C))$ such that
 \[\phi_{\C}\circ\kappa_{g,\C}=\kappa_{a,\C}.\]
By \cref{lem:pairing} and \cref{lem:free}, the maps from $H^*(\fM_{\Lambda};\Z)$ and $\mathsf{R}^\Lambda$ to their complexifications are injective.  Consequently,
 \[\ker\kappa_g=S\cap\ker\kappa_{g,\C}\subset S\cap\ker\kappa_{a,\C}=\ker\kappa_a.\]
 Thus $\phi_\Z$ exists and is the restriction of $\phi_{\C}$ to the integral lattices.

 Injectivity follows from that of $\phi$ \cite[Th.~3.4]{Webcenter}.  Finally, the operators by which the generators of $\mathcal{F}$ act preserve both lattices, and $\phi$ intertwines them by \cref{lem:intertwine}.
\end{proof}

For a field $\kk$, let $\phi_{\kk}$ be the composition
\[H^*(\fM_{\Lambda,\alpha};\kk)=H^*(\fM_{\Lambda,\alpha};\Z)\otimes\kk\xrightarrow{\phi_\Z\otimes\kk}Z(\Rn[\alpha])\otimes\kk\to Z(\Rn[\alpha](\kk)),\]
where the last map is induced by $\Rn[\alpha]\otimes\kk=\Rn[\alpha](\kk)$; it lands in the center since the image of a central element is central.  This is an algebra homomorphism with $\phi_{\kk}(c_{i;j})=e_{i;j}$, and it is a homomorphism of $\mathcal{F}_\kk$-modules, since $\phi_\Z$ is a homomorphism of $\mathcal{F}$-modules.

\begin{proof}[Proof of \cref{thm:A}(1)--(2)]
  Existence, uniqueness, and the comparison with $\phi$ are \cref{lem:phi-Z}.  It remains to show that $\phi_{\kk}$ is injective.

  By \cref{lem:phi-Z}, $\phi_{\kk}$ intertwines the actions of $\mathcal{F}_\kk$ on $H^*(\fM_{\Lambda};\kk)$ and $Z(\mathsf{R}^\Lambda(\kk))$.  In particular, its kernel is a graded $\mathcal{F}_\kk$-submodule.  By \cref{lem:cyclic-cohomology}, any nonzero such submodule contains $H^*(\fM_{\Lambda,0};\kk)$.  The restriction of $\phi_{\kk}$ to this one-dimensional highest-weight summand sends $1$ to $1$, so its kernel is zero.

  By \cref{thm:MN}, the classes $c_{i;j}$ generate $H^*(\fM_{\Lambda,\alpha};\kk)$, so the image of $\phi_{\kk}$ is exactly the JM center.  This proves (1).  Since $\phi_{\kk}$ is injective, we have
  \[\dim H^*(\fM_{\Lambda,\alpha};\kk)\leq \dim Z(\Rn[\alpha](\kk)).\]
  It is an isomorphism if and only if equality holds, which is equivalent to the reverse inequality in (2); in that case the full center coincides with the JM center.  This proves (2).

Note that this establishes that the action of $\mathcal{F}_\kk$ on $H^*(\fM_{\Lambda};\kk)$ factors through $U_\kk$: the map $\phi_\Z\otimes\kk\colon H^*(\fM_{\Lambda};\kk)\to Z(\mathsf{R}^\Lambda)\otimes\kk$ is a homomorphism of $\mathcal{F}_\kk$-modules, it is injective since $\phi_\kk$ factors through it, and the action on $Z(\mathsf{R}^\Lambda)\otimes \kk$ factors through $U_\kk$ by \cref{prop:loop-action}.
\end{proof}

\begin{corollary} Let $\Gamma$ be a finite graph without edge loops and $\mathfrak{g}$ be the corresponding symmetric Kac--Moody Lie algebra. Let $\Rn[\alpha](\Z)$ be the cyclotomic KLR algebra over $\Z$ associated with $\mathfrak{g}$. Then the Jucys--Murphy center of $\Rn[\alpha](\Z)$ is a pure $\Z$-submodule of $\Rn[\alpha](\Z)$.
\end{corollary}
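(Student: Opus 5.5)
Write $J\subset\Rn[\alpha](\Z)$ for the Jucys--Murphy center over $\Z$, i.e.\ the subring generated by the $e_{i;j}$; equivalently $J=\operatorname{im}\phi_\Z=\kappa_a(S)$, using \cref{thm:MN} and $\phi_\Z\circ\kappa_g=\kappa_a$. We must show $J$ is pure, i.e.\ $\Rn[\alpha](\Z)/J$ is torsion-free. By \cref{lem:free}, $\Rn[\alpha](\Z)$ is free of finite rank, so it suffices to show that for every prime $p$ the reduction map $J\otimes\mathbb{F}_p\to\Rn[\alpha](\Z)\otimes\mathbb{F}_p=\Rn[\alpha](\mathbb{F}_p)$ is injective. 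Indeed, if $px\in J$ with $x\notin J$, then $px$ is nonzero in $J\otimes\mathbb{F}_p$ (it is not in $pJ$, since $J$ is torsion-free) but maps to $0$.

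To get this injectivity, factor the map through cohomology. Since $\phi_\Z$ is injective (\cref{lem:phi-Z}), it gives an isomorphism $H^*(\fM_{\Lambda,\alpha};\Z)\xrightarrow{\sim}J$. Tensoring with $\mathbb{F}_p$ yields $H^*(\fM_{\Lambda,\alpha};\mathbb{F}_p)\cong J\otimes\mathbb{F}_p$, using \eqref{eq:UCT}. Composing with the reduction map into $\Rn[\alpha](\mathbb{F}_p)$ gives exactly $\phi_{\mathbb{F}_p}$, followed by the inclusion of the center into the algebra. Here we use that $\phi_\kk$ was defined as $\phi_\Z\otimes\kk$ followed by the map induced from $\Rn[\alpha]\otimes\kk=\Rn[\alpha](\kk)$. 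By \cref{thm:A}(1), $\phi_{\mathbb{F}_p}$ is injective, so $J\otimes\mathbb{F}_p\to\Rn[\alpha](\mathbb{F}_p)$ is injective. This holds for every $p$, which proves purity.

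The only point requiring care is the compatibility of the factorization: one must check that the composite $J\otimes\mathbb{F}_p\to Z(\Rn[\alpha])\otimes\mathbb{F}_p\to\Rn[\alpha](\mathbb{F}_p)$ agrees with $J\otimes\mathbb{F}_p\to\Rn[\alpha]\otimes\mathbb{F}_p$ induced by the inclusion $J\subset\Rn[\alpha]$. This is immediate by functoriality of $-\otimes\mathbb{F}_p$. The real content, injectivity mod $p$, has already been supplied by \cref{thm:A}(1) via the cogeneration result \cref{lem:cyclic-cohomology}. As a consequence, $J$ is a direct summand of $\Rn[\alpha](\Z)$, and its rank equals $\dim H^*(\fM_{\Lambda,\alpha};\kk)$ for every field $\kk$.
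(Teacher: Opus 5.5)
Your proof is correct and is the paper's argument: the paper deduces the corollary in one line from the injectivity of $\phi_{\kk}$ over every field (Theorem A(1)) and the universal coefficient identification $H^*(\fM_{\Lambda,\alpha};\kk)=H^*(\fM_{\Lambda,\alpha};\Z)\otimes\kk$, and you have spelled out the standard reduction from purity to injectivity modulo each prime $p$. One small correction: the step $px\notin pJ$ uses that $\Rn[\alpha](\Z)$ is torsion-free, not merely that $J$ is, since if $px=py$ with $y\in J$ then $p(x-y)=0$ holds in $\Rn[\alpha](\Z)$.
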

\begin{proof} This follows from \cref{thm:A}(1) and (\ref{eq:UCT}).
\end{proof}

The injectivity in \cref{thm:A}(1) also allows us to pass from a
characteristic-zero isomorphism to an integral isomorphism.

\begin{proposition}\label{prop:integral-comparison}
Let $\Gamma$ be a finite graph without edge loops, let $\Lambda$ be
a dominant weight, and let $\alpha\in Q^+$.
If $\phi_{\kk}$ is an isomorphism for some field $\kk$ of
characteristic zero, then
\[
  \phi_{\Z}\colon H^*(\fM_{\Lambda,\alpha};\Z)
  \longrightarrow Z(\Rn[\alpha](\Z))
\]
is an isomorphism of rings and $Z(\Rn[\alpha](\Z))=Z(\Rn[\alpha](\Z))^{\rm{JM}}$.
\end{proposition}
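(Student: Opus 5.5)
We need to show that $\phi_\Z$ is surjective, since injectivity is already part of \cref{lem:phi-Z}. The plan is a lattice argument: the image of $\phi_\Z$ is saturated in $Z(\Rn[\alpha])$, and it has full rank because $\phi_\kk$ is an isomorphism in characteristic zero.

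First, the full-rank step. The field $\kk$ has characteristic zero, so it is flat over $\Z$, and the center commutes with flat base change. This gives $Z(\Rn[\alpha])\otimes\kk\cong Z(\Rn[\alpha](\kk))$, so $\phi_\kk=\phi_\Z\otimes\kk$. By \cref{lem:pairing} and \cref{lem:free}, both $H^*(\fM_{\Lambda,\alpha};\Z)$ and $Z(\Rn[\alpha])$ are free abelian of finite rank. If $\phi_\kk$ is an isomorphism, the two ranks coincide and $\phi_\Z\otimes\Q$ is an isomorphism. Hence the cokernel $C$ of $\phi_\Z$ is a finite abelian group.

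Second, the saturation step. I will show $C$ has no $p$-torsion for each prime $p$. Tensor the short exact sequence
\[0\to H^*(\fM_{\Lambda,\alpha};\Z)\xrightarrow{\phi_\Z} Z(\Rn[\alpha])\to C\to 0\]
with $\mathbb{F}_p$. Since $Z(\Rn[\alpha])$ is free, this gives $\operatorname{Tor}_1(C,\mathbb{F}_p)=\ker(\phi_\Z\otimes\mathbb{F}_p)$. Now $\phi_{\mathbb{F}_p}$ is the composite of $\phi_\Z\otimes\mathbb{F}_p$ with the natural map $Z(\Rn[\alpha])\otimes\mathbb{F}_p\to Z(\Rn[\alpha](\mathbb{F}_p))$. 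By \cref{thm:A}(1), $\phi_{\mathbb{F}_p}$ is injective, so $\phi_\Z\otimes\mathbb{F}_p$ is injective. Therefore $\operatorname{Tor}_1(C,\mathbb{F}_p)=C[p]=0$. A finite group with no $p$-torsion for every $p$ is zero, so $C=0$ and $\phi_\Z$ is an isomorphism onto $Z(\Rn[\alpha])$.

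Finally, the equality with the JM center. By \cref{thm:MN}, $H^*(\fM_{\Lambda,\alpha};\Z)$ is generated as a ring by the classes $c_{i;j}$. Since $\phi_\Z(c_{i;j})=e_{i;j}$, the image of $\phi_\Z$ is the subring generated by the $e_{i;j}$, which is the integral JM center. So $Z(\Rn[\alpha](\Z))=Z(\Rn[\alpha](\Z))^{\rm JM}$.

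The step needing care is the identification $\ker(\phi_\Z\otimes\mathbb{F}_p)=0$. It relies on the factorization of $\phi_{\mathbb{F}_p}$ through $\phi_\Z\otimes\mathbb{F}_p$, which holds by the definition of $\phi_\kk$, together with the injectivity over arbitrary fields from \cref{thm:A}(1). The rest is routine.
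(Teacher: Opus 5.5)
Your proof is correct and follows essentially the same route as the paper. The full-rank step comes from the characteristic-zero isomorphism, and saturation of the image comes from injectivity of $\phi_{\mathbb{F}_p}$ (\cref{thm:A}(1)) together with freeness of $H^*(\fM_{\Lambda,\alpha};\Z)$ and of $Z(\Rn[\alpha])$; your identification $\operatorname{Tor}_1(C,\mathbb{F}_p)=\ker(\phi_\Z\otimes\mathbb{F}_p)=0$ is a homological repackaging of the paper's element-wise minimal-denominator argument.
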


\begin{proof}
By \cref{lem:phi-Z}, $\phi_{\Z}$ is injective.
The center of a finite-dimensional algebra commutes with extension
of the ground field. Together with \cref{lem:pairing}(3) and the
base-change compatibility of the construction in \cref{lem:phi-Z},
this identifies $\phi_{\kk}$ with
$\phi_{\Q}\otimes_{\Q}\kk$.
Since $\kk$ is faithfully flat over $\Q$, the hypothesis implies
that $\phi_{\Q}$ is an isomorphism.

Let $z\in Z(\Rn[\alpha](\Z))$.
Surjectivity of $\phi_{\Q}$, followed by clearing rational
denominators, gives a positive integer $d$ and a class
$x\in H^*(\fM_{\Lambda,\alpha};\Z)$ such that
\begin{equation}\label{eq:integral-comparison-multiple}
  dz=\phi_{\Z}(x).
\end{equation}
This equality holds over $\Z$, since $\Rn[\alpha](\Z)$ is free
abelian by \cref{lem:free} and hence embeds into
$\Rn[\alpha](\Q)$.
Choose $d$ minimal among positive integers satisfying
\eqref{eq:integral-comparison-multiple} for some such $x$,
and fix a corresponding $x$.

Suppose that $d>1$, and let $p$ be a prime divisor of $d$.
Reducing \eqref{eq:integral-comparison-multiple} modulo $p$ gives
$\phi_{\mathbb F_p}(\bar x)=0$, where $\bar x$ is the reduction
of $x$ modulo $p$.
By \cref{thm:A}(1), $\bar x=0$.
By \cref{lem:pairing}(3), reduction modulo $p$ identifies
$H^*(\fM_{\Lambda,\alpha};\mathbb F_p)$ with
\[
  H^*(\fM_{\Lambda,\alpha};\Z)/
  pH^*(\fM_{\Lambda,\alpha};\Z).
\]
Thus $x=px'$ for some
$x'\in H^*(\fM_{\Lambda,\alpha};\Z)$.
Substituting into \eqref{eq:integral-comparison-multiple} gives
\[
  p\left(\frac{d}{p}z-\phi_{\Z}(x')\right)=0.
\]
By \cref{lem:free}, $\Rn[\alpha](\Z)$ is torsion-free, so
\[
  \frac{d}{p}z=\phi_{\Z}(x').
\]
This contradicts the minimality of $d$.
Hence $d=1$, and $z$ lies in the image of $\phi_{\Z}$.
Therefore $\phi_{\Z}$ is an isomorphism.
Finally, its image is the Jucys--Murphy center by the construction
in \cref{lem:phi-Z}, giving the asserted equality of centers.
\end{proof}

\section{Finite and affine type A}
\label{sec:affineA}

To apply \cref{thm:A}(1)--(2), we need an upper bound on the dimension
of $Z(\Rn[\alpha](\kk))$, which is not known in general.  In finite and
affine type A, the required bound follows from the dimension theorem
\cref{thm:HS} below.

Let us recall the definition of non-degenerate cyclotomic Hecke algebras.
\begin{definition}[{\cite[Def.~3.1]{ArikiKoike94}}]\label{def:Hecke}
  Let $\kk$ be a commutative ring, and let $q\in\kk^\times$ and $\bQ=(Q_1,\dots,Q_r)\in(\kk^\times)^r$.  The \emph{cyclotomic Hecke algebra} (or \emph{Ariki--Koike algebra}) $\Hn=\Hn(\kk)$ of type $G(r,1,n)$, with Hecke parameter $q$ and cyclotomic parameters $\bQ$, is the unital $\kk$-algebra generated by $T_0,T_1,\dots,T_{n-1}$ subject to the relations
  \begin{align*}
    &\prod_{m=1}^r(T_0-Q_m)=0, && (T_a-q)(T_a+1)=0 \quad(1\leq a<n),\\
    &T_0T_1T_0T_1=T_1T_0T_1T_0, && T_aT_b=T_bT_a\quad(0\leq a<b-1<n-1),\\
    &T_aT_{a+1}T_a=T_{a+1}T_aT_{a+1}\quad(1\leq a<n-1).
  \end{align*}
  The \emph{Jucys--Murphy elements} of $\Hn$ are $L_1=T_0$ and $L_{a+1}=q^{-1}T_aL_aT_a$ for $1\leq a<n$.
\end{definition}
\begin{lemma}[{\cite[\S 3]{ArikiKoike94}}]\label{L:JMCenter}
  The Jucys--Murphy elements $L_1,\dots,L_n$ commute with each other, and every symmetric polynomial in them is central in $\Hn$.
\end{lemma}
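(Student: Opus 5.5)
The plan is to work directly from the defining relations of \cref{def:Hecke}, proceeding in two stages: first show that the $L_a$ pairwise commute, then show that each generator $T_b$ commutes with every symmetric polynomial in $L_1,\dots,L_n$.

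\emph{Commutativity of the Jucys--Murphy elements.} It suffices to prove $L_aL_b=L_bL_a$ for $a<b$ by induction on $b$. The key steps are the following.
\begin{enumerate}
\item Note that $T_c$ commutes with $L_a$ whenever $c\neq a-1,a$. For $c\geq a+1$ this holds because $L_a$ is a word in $T_0,\dots,T_{a-1}$, each of which commutes with $T_c$ by the far-commutation relations. For $c<a-1$, write $L_a=q^{-(a-1)}T_{a-1}\cdots T_1T_0T_1\cdots T_{a-1}$ and move $T_c$ through using the braid relations; the case $c=0$ uses $T_0T_1T_0T_1=T_1T_0T_1T_0$.
\item Show that $L_a$ commutes with $L_{a+1}=q^{-1}T_aL_aT_a$. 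This is the statement that $T_aL_aT_aL_a=L_aT_aL_aT_a$. For $a=1$ it is exactly the relation $T_0T_1T_0T_1=T_1T_0T_1T_0$. For $a>1$, substitute $L_a=q^{-1}T_{a-1}L_{a-1}T_{a-1}$ and reduce to the case $a-1$, using that $L_{a-1}$ commutes with $T_a$ by step (1), together with the braid relation.
\item For general $b>a+1$, write $L_b=q^{-1}T_{b-1}L_{b-1}T_{b-1}$. By step (1), $L_a$ commutes with $T_{b-1}$, and by induction it commutes with $L_{b-1}$, so it commutes with $L_b$.
\end{enumerate}

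\emph{Centrality of symmetric polynomials.} Since $T_0=L_1$ commutes with all $L_a$, it commutes with every polynomial in them. It remains to treat $T_b$ for $1\leq b<n$.
\begin{enumerate}
\item By step (1) above, $T_b$ commutes with $L_c$ for $c\neq b,b+1$. It therefore suffices to show that $T_b$ commutes with $L_b+L_{b+1}$ and with $L_bL_{b+1}$, because the symmetric polynomials in $L_b,L_{b+1}$ are generated by these two, with coefficients that are polynomials in the other $L_c$.
\item For the product, use $T_bL_bT_b=qL_{b+1}$, that is, $T_bL_b=qL_{b+1}T_b^{-1}$, where $T_b^{-1}=q^{-1}(T_b-(q-1))$ by the quadratic relation. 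One checks that $T_bL_bL_{b+1}=L_bL_{b+1}T_b$ via $T_bL_bL_{b+1}=qL_{b+1}T_b^{-1}L_{b+1}$ and $T_b^{-1}L_{b+1}=L_bT_b$.
\item For the sum, derive the relations $T_bL_b=L_{b+1}T_b-(q-1)L_{b+1}$ and $T_bL_{b+1}=L_bT_b+(q-1)L_{b+1}$ from $L_{b+1}=q^{-1}T_bL_bT_b$ and the quadratic relation. Adding them gives $T_b(L_b+L_{b+1})=(L_b+L_{b+1})T_b$.
\end{enumerate}

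The main obstacle is step (2) of the first stage, namely $[L_a,L_{a+1}]=0$ for $a>1$. This needs a careful induction through the braid relations, as carried out in \cite[\S3]{ArikiKoike94}, and I would follow their argument there.
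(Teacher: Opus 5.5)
Your proof is correct and is essentially the argument the paper relies on: the paper gives no proof of its own, citing \cite[\S 3]{ArikiKoike94} and noting that the lemma follows easily from the relations, and your two-stage verification is the standard one. There is one slip in the product step: from $L_{b+1}=q^{-1}T_bL_bT_b$ you get $T_b^{-1}L_{b+1}=q^{-1}L_bT_b$, not $L_bT_b$; this $q^{-1}$ cancels the $q$ in $T_bL_b=qL_{b+1}T_b^{-1}$, giving $T_bL_bL_{b+1}=L_{b+1}L_bT_b=L_bL_{b+1}T_b$ as required.
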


We call the subalgebra $Z(\Hn)^\JM\subseteq Z(\Hn)$ consisting of the symmetric polynomials in $L_1,\dots,L_n$ the \emph{Jucys--Murphy center} of $\Hn$.  Both $Z(\Hn)$ and $Z(\Hn)^\JM$ commute with extension of scalars from a field $\kk$ to a larger field, so the same is true of the equality $Z(\Hn)=Z(\Hn)^\JM$.  (In particular, \cref{thm:B} also covers the version of $\Hn$ defined by a relation $P(T_0)=0$ for a monic polynomial $P$ with $P(0)\neq 0$ that does not split over $\kk$.)

We first recall the extension of Brundan--Kleshchev's isomorphism between cyclotomic KLR algebras and cyclotomic Hecke algebras to cyclotomic parameters lying in several $q$-orbits.
\begin{theorem}[{\cite[Main Theorem]{BKKL}, \cite[Th.~1.6]{WebBKnote}, \cite[Th.~3.4]{Ros}}]\label{thm:BK}
  Let $\kk$ be a field, $q\in\kk^\times$ with $q\neq 1$, and $\bQ\in(\kk^\times)^r$.  Then $\Hn(\kk)$ is isomorphic to the cyclotomic KLR algebra $\Rn(\kk)$ over $\kk$ of a graph $\Gamma=(I,E)$ which is a union of quivers of type $A_\infty$ or $A^{(1)}_{e-1}$, for some $e\geq 2$, where
  \begin{equation}\label{eq:I-E}
    I=\{q^aQ_m\mid a\in \Z,\ 1\leq m\leq r\}\subset \kk^\times,\qquad E=\{(x,qx)\mid x\in I\},
  \end{equation}
  and $\Lambda=\sum_{i\in I}w_i\Lambda_i$ with $w_i=\#\{m\mid Q_m=i\}$.
\end{theorem}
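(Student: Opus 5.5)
The plan is to follow the Brundan--Kleshchev strategy: write down explicit mutually inverse homomorphisms between $\Hn(\kk)$ and $\Rn(\kk)$ in terms of generalized eigenspaces of the Jucys--Murphy elements. The graph $\Gamma$ of \eqref{eq:I-E} has vertex set the union of the $q$-orbits of the $Q_m$. Each orbit is an $e$-cycle if $q$ has finite multiplicative order $e\geq 2$ (recall $q\neq 1$), and a copy of $A_\infty$ otherwise. So $\Gamma$ has the stated form, and there are no edge loops precisely because $q\neq 1$.

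First, since $\Hn(\kk)$ is finite-dimensional and the $L_a$ commute (\cref{L:JMCenter}), I decompose $1=\sum_{\mathbf{i}}e(\mathbf{i})$ into simultaneous generalized eigenspace idempotents, with $L_a$ acting on $e(\mathbf{i})\Hn$ with the single eigenvalue $i_a$. A standard induction on $a$, using $L_{a+1}=q^{-1}T_aL_aT_a$ and $\prod_m(L_1-Q_m)=0$, shows that every eigenvalue lies in $I$, so $\mathbf{i}\in I^n$. Next I set $y_a=\sum_{\mathbf{i}}(1-i_a^{-1}L_a)e(\mathbf{i})$, which are nilpotent. The $\psi_a$ I define as $e(\mathbf{i})$-twisted normalizations of the intertwiners $T_a+\text{(rational function of }L_a,L_{a+1})$. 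The normalizing power series $P_a(\mathbf{i}),Q_a(\mathbf{i})$ are chosen according to whether $i_{a+1}=i_a$, $i_{a+1}=q^{\pm1}i_a$, or neither. These make sense because $y_a$ is nilpotent and $q\neq1$ keeps the relevant denominators invertible.

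The next step is to verify the KLR relations with $Q_{ij}$ as in \S\ref{sec:KLR-side}, together with the cyclotomic relation $y_1^{w_{i_1}}e(\mathbf{i})=0$. The latter follows from $\prod_m(L_1-Q_m)=0$ restricted to the $i_1$-generalized eigenspace. Conversely, I define $T_0,T_a\in\Rn(\kk)$ by the inverse formulas and check the Ariki--Koike relations. This shows $\Hn(\kk)$ is generated by the KLR-type elements and gives a surjection $\Rn(\kk)\to\Hn(\kk)$. I then conclude it is an isomorphism either by the two maps being mutually inverse, or by a dimension count: $\dim\Hn=r^nn!$, and the graded dimension formula for $\Rn$ gives the same total (\cite[Cor.~3.22]{Webmerged}).

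The main obstacle is the multiple-orbit case, together with the $e=2$ quadratic relations. When the $Q_m$ lie in several $q$-orbits, the KLR algebra of a disjoint union of quivers is not literally a tensor product. Here I first reduce, via the Dipper--Mathas Morita equivalence, to blocks where idempotents $e(\mathbf{i})$ mix orbits. I then check directly that for $i,j$ in different components the elements $\psi_ae(\mathbf{i})$ satisfy $\psi_a^2e(\mathbf{i})=e(\mathbf{i})$. This holds because $Q_{ij}=1$ and the normalization in the ``neither'' case makes $\psi_a$ an honest involution up to units, as in \cite{WebBKnote,Ros}. For $e=2$ the double edge forces the quadratic relation $Q_{ij}=-(u-v)^2$ with the sign $r_{ij}$. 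I would check this sign carefully by expanding $\psi_a^2e(\mathbf{i})$ to second order in $y_a,y_{a+1}$, since this is where a mismatch with the orientation conventions of \S\ref{sec:KLR-side} could arise.
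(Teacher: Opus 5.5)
Your sketch follows the route the paper relies on. The paper gives no argument of its own; it cites Brundan--Kleshchev's explicit generalized-eigenspace isomorphism (idempotents $e(\mathbf{i})$, $y_a$ built from $L_a$, normalized intertwiners $\psi_a$, mutually inverse maps), together with its extension to several $q$-orbits by Webster and by Rostam, and this is what you propose.

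One correction to your last paragraph. The Dipper--Mathas Morita equivalence is not needed, and on its own it would give only a Morita equivalence, not the asserted isomorphism. Nor is the multi-orbit case a real obstacle: every formula and relation check in Brundan--Kleshchev depends only on the equalities and $q^{\pm1}$-adjacencies among neighbouring residues. So pairs of residues from different orbits are already covered, verbatim, by their ``neither'' case.
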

If $q$ has finite multiplicative order $e$, the $q$-orbits in $I$ give the $e$-cycle components of $\Gamma$; if $q$ has infinite order, they give the $A_\infty$ components.  The decomposition of $I$ into connected components will be familiar to readers versed in the representation theory of Ariki--Koike algebras: having $I$ connected is precisely the $q$-connectedness condition of \cite{dipperMoritaEquivalences2002}, and the KLR algebra makes the Morita equivalence of \cite[Th.~1.2]{dipperMoritaEquivalences2002} with a sum of tensor products of Hecke algebras with $q$-connected parameters explicit.

We identify an $r$-multipartition $\blam=(\lambda^{(1)}|\dots|\lambda^{(r)})$ of $n$ with its diagram, and give the node in row $a$ and column $c$ of the $m$th component the \emph{residue} $Q_mq^{c-a}\in I$.  Let $\Parts$ be the set of $r$-multipartitions of $n$, and for $\alpha=\sum_iv_i\alpha_i\in Q^+_n$, let $\Parts[\alpha]$ be the set of $\blam\in\Parts$ having exactly $v_i$ nodes of residue $i$ for every $i\in I$; thus $\Parts=\bigsqcup_{\alpha\in Q^+_n}\Parts[\alpha]$.

\begin{lemma}\label{lem:dim-H}
  For any field $\kk$ and the graph $\Gamma$ of \cref{thm:BK}, we have $\dim H^*(\fM_{\Lambda,\alpha};\kk)=|\Parts[\alpha]|$.
\end{lemma}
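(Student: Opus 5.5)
By \cref{lem:pairing}(2)--(3), $H^*(\fM_{\Lambda,\alpha};\kk)\cong H^*(\fM_{\Lambda,\alpha};\Z)\otimes\kk$, and this integral cohomology is free abelian. So $\dim_\kk H^*(\fM_{\Lambda,\alpha};\kk)$ equals the total Betti number, which equals $\dim_\C H^*(\fM_{\Lambda,\alpha};\C)$ and does not depend on $\kk$. It therefore suffices to compute the Euler characteristic, which equals the total Betti number because odd cohomology vanishes, and to show that it is $|\Parts[\alpha]|$. Note that $\Gamma$ depends only on the combinatorial data: the orbit structure of $\bQ$ under $q$ and the order $e$ of $q$. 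Moreover $\fM_{\Lambda,\alpha}$ is a complex variety attached to $(\Gamma,\Lambda,\alpha)$, so the field $\kk$ plays no further role.

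I would compute $\chi$ by torus localization. Take the torus $T=(\C^\times)^{\sum w_i}$ acting on the framing $W$ by a generic cocharacter, possibly together with the $\C^\times$ scaling the cotangent directions. By \cref{lem:pairing}(1) the variety retracts onto the projective core, and the torus action contracts $\fM$ to the compact core. Hence $\chi(\fM_{\Lambda,\alpha})=\chi(\fM_{\Lambda,\alpha}^T)$. Under a generic framing torus, the fixed locus is a product $\prod_{m}\fM_{\Lambda_{i_m},\alpha^{(m)}}$ over the $r$ framing lines with $\sum_m\alpha^{(m)}=\alpha$ (Nakajima's tensor product decomposition). Iterating, the problem reduces to $\Lambda=\Lambda_i$ a fundamental weight.

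For $\Lambda=\Lambda_i$ on a cyclic quiver $A^{(1)}_{e-1}$ or on $A_\infty$, the quiver variety $\fM_{\Lambda_i,\alpha}$ with the additional cotangent $\C^\times$ (or a suitable torus acting on the edges of the cycle) has isolated fixed points. For the cyclic quiver these are the $\Z/e$-fixed points in the Hilbert scheme of points on $\C^2$, that is, monomial ideals, i.e.\ partitions, whose content counts by residue $i+(c-a)\bmod e$ give $\alpha$. Then $\chi(\fM_{\Lambda_i,\alpha})$ is the number of partitions $\lambda$ with residue content $\alpha$. For $A_\infty$ the relevant components are finite type $A$ quiver varieties, which are cotangent bundles of partial flag varieties. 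Their fixed points are again indexed by partitions of the correct residue content (equivalently, by weight vectors of the crystal). Combining over the $r$ components gives exactly the $r$-multipartitions in $\Parts[\alpha]$, with the residue of node $(a,c)$ in component $m$ equal to $Q_mq^{c-a}$.

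An alternative, which I would use as a cross-check, is to invoke \cite{nakajimaQuiverVarieties2001a}: total $H_*(\fL_\Lambda)$ has dimension $\dim V(\Lambda)$ weight by weight only for finite type, so that route fails in affine type. That is why I prefer the fixed-point count. The main obstacle is justifying that the relevant torus fixed points are isolated in the affine case, and that they are indexed by multipartitions with the right residues. For $e\ge 3$ the $\C^\times$ scaling $B_h$ for $h$ oriented one way around the cycle together with the framing torus should suffice. For $e=2$, which has a double edge, I would instead scale the two edges separately, keeping the moment map equivariant, which is possible since only the symplectic form must be scaled uniformly. If isolatedness fails, I would fall back on the Bialynicki-Birula decomposition together with a count of fixed components via the known identification with $\Gamma$-equivariant Hilbert schemes and Gieseker spaces.
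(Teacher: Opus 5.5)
Your argument is correct. It shares the paper's central idea, counting torus-fixed points indexed by $\Parts[\alpha]$, but it passes from fixed points to dimensions differently.

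\textbf{The paper's route.} It cites Fujii--Minabe for the fact that $(\C^\times)^2\times T_{\Bw}$ has isolated fixed points in bijection with $\Parts[\alpha]$. It then uses the Bia\l{}ynicki-Birula paving by affine cells \cite[Lem.~7.1.5]{nakajimaQuiverVarieties2001a} to conclude directly that $H^*(\fM_{\Lambda,\alpha};\Z)$ is free of rank equal to the number of fixed points.

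\textbf{Your route.} You take freeness and the vanishing of odd cohomology from property (S) in \cref{lem:pairing}, so the dimension over any $\kk$ is the Euler characteristic. You then apply $\chi(X)=\chi(X^T)$. You also derive the fixed-point count yourself in two stages:
\begin{enumerate}
\item the framing torus, via Nakajima's tensor product decomposition into rank-one framed factors;
\item the two-dimensional torus on the $\Z/e$-equivariant Hilbert scheme, where the fixed points are monomial ideals.
\end{enumerate}

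\textbf{What each buys.} Your approach needs no contracting cocharacter and no cell decomposition, since Euler characteristic localization holds for any algebraic torus action on a complex variety. The paper's approach obtains integral freeness and a cell basis in one stroke, without appealing separately to property (S).

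\textbf{Inessential slips.}
\begin{enumerate}
\item The framing torus alone does not contract $\fM$ onto its core. You never need this, since $\chi(X)=\chi(X^T)$ holds regardless.
\item For a rank-one framing, the framing torus acts trivially, because it is absorbed by the scalars in $G_{\Bv}$. In the reduced case it is the edge scaling together with the cotangent scaling $\mathbb{S}$ that supplies the two-dimensional torus.
\item The case $e=2$ is not special: the $(\C^\times)^2$-action on $\C^2$ commutes with $\Z/e\subset SL_2$ for every $e$.
\item For $A_\infty$ with rank-one framing the varieties are points or empty. This is consistent with the fact that the contents of a partition determine it. Alternatively, as the paper does, you can treat $A_\infty$ as a cyclic quiver with $e$ large.
\end{enumerate}
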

\begin{proof}
  Since the quiver variety for a union of quivers is the product of those for its components, we may assume $\Gamma$ is connected.  For a cyclic or linear quiver, the torus $T=(\C^\times)^2\times T_{\Bw}$ acting on $\fM_{\Lambda,\alpha}$ through the plane $\C^2$ and the framing has isolated fixed points, in bijection with $\Parts[\alpha]$ \cite[Prop.~3.8]{fujiiCombinatorialStudy2017}.  Note that Fujii--Minabe's approach only applies precisely as written if $I=\mu_e$ is the $e$th roots of unity, in which case, our residue $Q_mq^{c-a}$ is the scalar which $q\in \mu_e$ acts by the corresponding representation.  Of course, as a geometric statement about quiver varieties, this case is perfectly adequate, since if $\Gamma$ is a union of $e$-cycles, the quiver variety is a product of quiver varieties for the $e$-cycle, and if $\Gamma$ is a union of $A_\infty$ quivers, the quiver variety is a product of quiver varieties for $e$-cycles for $e$ sufficiently large.  We leave the verification that our notion of residue works as desired in this case to the reader.

  The Bia\l{}ynicki-Birula decomposition for an appropriately chosen cocharacter of $T$ partitions $\fM(\Bv,\Bw)$ into affine cells indexed by the fixed points \cite[Lem.~7.1.5]{nakajimaQuiverVarieties2001a}.  Thus $H^*(\fM_{\Lambda,\alpha};\Z)$ is free of rank $|\Parts[\alpha]|$, so $\dim H^*(\fM_{\Lambda,\alpha};\kk)=|\Parts[\alpha]|$ for every field $\kk$.
\end{proof}

We use the following dimension theorem.
\begin{theorem}[{\cite[Th.~1.4]{huCocenterCyclotomic2022}, \cite[Th.~1]{Brundan:degenCentre}}]\label{thm:HS}
  Let $\kk$ be a field and $q\neq 1$.  Then $\dim Z(\Hn(\kk))=|\Parts|$.  The same dimension formula holds for a degenerate cyclotomic Hecke algebra of level $r$.
\end{theorem}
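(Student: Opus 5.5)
Since \cref{thm:HS} is quoted from \cite{huCocenterCyclotomic2022} and \cite{Brundan:degenCentre}, I would reconstruct its proof by working with the cocenter rather than the center. The cocenter is a quotient, so it behaves well under specialization, and the symmetric structure then converts a statement about $\Tr$ into one about $Z$. The first step is to recall that $\Hn(\kk)$ is a symmetric algebra for every choice of $q,\bQ$, via the Malle--Mathas symmetrizing trace. A symmetrizing form gives a perfect pairing $Z(\Hn)\times\Tr(\Hn)\to\kk$, exactly as in \cref{lem:adjunction}(1). Hence $\dim Z(\Hn(\kk))=\dim\Tr(\Hn(\kk))$, and it suffices to prove $\dim\Tr(\Hn(\kk))=|\Parts|$.

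Next I would pass to a generic base ring $R$, for instance $\Z[q^{\pm1},Q_1^{\pm1},\dots,Q_r^{\pm1},(q-1)^{-1}]$, or a suitable localization adapted to the specialization at hand. Since $\Tr(A)=A/[A,A]$ is right exact, we have $\Tr(\Hn(R))\otimes_R\kk\cong\Tr(\Hn(\kk))$ for every specialization $R\to\kk$ with $q\neq1$. So the goal becomes: $\Tr(\Hn(R))$ is a free $R$-module of rank $|\Parts|$. The lower bound comes from the fraction field $K$ of $R$. There $\Hn(K)$ is split semisimple with simple modules indexed by $\Parts$ (Ariki--Koike), so $\dim_K\Tr(\Hn(K))=|\Parts|$. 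Therefore it is enough to exhibit an $R$-spanning set of $\Tr(\Hn(R))$ with exactly $|\Parts|$ elements. Such a set is then automatically a basis, since it is linearly independent after tensoring with $K$.

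The main obstacle is constructing this spanning set. I would build it as in Geck--Pfeiffer's treatment of Iwahori--Hecke algebras. Start from the basis $L_1^{a_1}\cdots L_n^{a_n}T_w$ with $0\le a_j<r$. Then reduce modulo commutators: conjugating by $T_s$ and using $T_s^{-1}=q^{-1}T_s+(q^{-1}-1)$ shows that every $T_w$ is congruent to a combination of elements of minimal length in its conjugacy class. Doing the same in the affine/cyclotomic setting should reduce everything to the images of elements $L^{\mathbf a}T_{w_\lambda}$, where $w_\lambda$ is a fixed Coxeter element of a standard parabolic $\Sym_{\lambda_1}\times\cdots$. The exponents $\mathbf a$ must be constant on each block and then further cut down. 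The step I expect to be hardest is the final one: identifying these survivors with $r$-multipartitions of $n$, and showing that the larger exponents can be eliminated using the cyclotomic relation $\prod_m(L_1-Q_m)=0$ together with commutator relations. This is precisely where $q\neq1$ (invertibility of $q-1$) enters, in the identities expressing $L_{a+1}-L_a$-type commutators. I would first verify the reduction for $n=2$ against McGerty's computation, then induct on $n$ using the filtration by the number of blocks.

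For the degenerate algebra the same outline applies, with the JM elements $x_a$ and $s_a$ in place of $L_a$ and $T_a$, and with generic semisimplicity over $\Q(\bQ)$. Here, however, I would simply cite \cite[Th.~1]{Brundan:degenCentre}. Brundan computes the center directly, identifying it with the symmetric polynomials in the $x_a$ and showing that these have dimension $|\Parts|$ via the degenerate analogue of the above basis argument.
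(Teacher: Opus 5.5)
The paper does not reprove this statement. It quotes \cite[Th.~1.4]{huCocenterCyclotomic2022} and \cite[Th.~1]{Brundan:degenCentre}, and your outer reductions match how the cited result is organized:
\begin{itemize}
\item symmetry of $\Hn$ gives $\dim Z=\dim\Tr$;
\item right exactness of $\Tr$ gives base change from a generic ring $R$;
\item generic split semisimplicity makes any $R$-spanning set of $\Tr(\Hn(R))$ with $|\Parts|$ elements a basis.
\end{itemize}
All of that is correct. However, it only reduces the theorem to the claim that $\Tr(\Hn(R))$ is spanned by $|\Parts|$ classes. That claim is the entire content of the cited theorem, and you give no argument for it, only the expectation that a Geck--Pfeiffer style reduction ``should'' work. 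This is a genuine gap, and it sits exactly where the work is.

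Moreover, the normal form you aim for is wrong. Take $n=r=2$. Using $T_1^2=(q-1)T_1+q$, $L_2=q^{-1}T_1L_1T_1$ and $L_1^2=(Q_1+Q_2)L_1-Q_1Q_2$, one finds in $\Tr(\Hn(R))$
\begin{align*}
&[L_2]=[L_1]+q^{-1}(q-1)[L_1T_1],\qquad [L_2T_1]=[L_1T_1]+(q-1)[L_2],\\
&[L_1L_2T_1]=(q-1)[L_1L_2]+(Q_1+Q_2)[L_1T_1]-Q_1Q_2[T_1].
\end{align*}
Hence $[1],[L_1],[L_1L_2],[T_1],[L_1T_1]$ is an $R$-basis. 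Your candidates with constant exponents on the $2$-cycle, $[T_1]$ and $[L_1L_2T_1]$, together with $[1],[L_1],[L_1L_2]$, differ from this basis by a matrix of determinant $Q_1+Q_2$. So for $q=2$, $\bQ=(1,-1)$ over $\Q$ they span only a $4$-dimensional subspace of the $5$-dimensional cocenter.

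The correct bookkeeping, as for conjugacy classes in $(\Z/r\Z)\wr\Sym_n$, is the total exponent along each cycle, brought into a window of $r$ values. Proving that every class reduces to such representatives is the real difficulty. The cyclotomic relation only constrains $L_1$, so lowering the exponent on a block not containing the first strand means conjugating it to the front. Every such move produces error terms of other cycle types; already $[L_2^2]=[L_1^2]+q^{-1}(q-1)\bigl([L_1^2T_1]+[L_1L_2T_1]\bigr)$. You therefore need an induction on a carefully chosen order that controls these terms, and you have not supplied one.

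Your diagnosis of where $q\neq1$ enters is also off. These identities involve $q-1$ only as a coefficient, never as a denominator. At $q=1$ the algebra is $A^{\otimes n}\rtimes\Sym_n$ with $A=\kk[x]/\bigl(\prod_m(x-Q_m)\bigr)$, whose cocenter again has dimension $|\Parts|$. The hypothesis matters for the center conjecture, not for this count. Citing Brundan for the degenerate case is what the paper does too.
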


\begin{proof}[Proof of \cref{thm:A}, part (3)]
  We may extend the field, since the relevant dimensions are unchanged by field extension.  If $p=0$ or $p\nmid e$, choose a primitive $e$th root of unity $q\in\kk$ and cyclotomic parameters $\bQ$ whose $q$-orbits realize the components of $\Gamma$ and the weight $\Lambda$.  Then \cref{thm:BK} identifies $\Rn(\kk)$ with $\Hn(\kk)$.  If $p=e$, the degenerate Brundan--Kleshchev isomorphism \cite{BKKL} gives the same identification with a degenerate cyclotomic Hecke algebra.

  By \cref{thm:A}(1), the direct sum
  \[\phi_{\kk}\colon \bigoplus_{\alpha\in Q^+_n}H^*(\fM_{\Lambda,\alpha};\kk)\longrightarrow \bigoplus_{\alpha\in Q^+_n}Z(\Rn[\alpha](\kk))=Z(\Rn(\kk))\]
  is injective.  By \cref{lem:dim-H}, its source has dimension $|\Parts|$, while \cref{thm:HS} and the preceding identifications show that its target has the same dimension.  Hence every summand $H^*(\fM_{\Lambda,\alpha};\kk)\to Z(\Rn[\alpha](\kk))$ is an isomorphism.
\end{proof}

Applying \cref{prop:integral-comparison} to the
characteristic-zero case of \cref{thm:A}(3) gives the following
integral isomorphism, with no restriction on the prime divisors
of $e$.

\begin{corollary}\label{cor:affine-A-integral-comparison}
Suppose that every connected component of $\Gamma$ is an
$e$-cycle, for the same $e\geq 2$.
Then, for every dominant weight $\Lambda$ and every $\alpha\in Q^+$,
\[
  \phi_{\Z}\colon H^*(\fM_{\Lambda,\alpha};\Z)
  \longrightarrow Z(\Rn[\alpha](\Z))
\]
is an isomorphism of rings.
\end{corollary}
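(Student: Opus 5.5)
The plan is to apply \cref{prop:integral-comparison}, which reduces the statement to showing that $\phi_\kk$ is an isomorphism for a single field $\kk$ of characteristic zero. The natural choice is $\kk=\C$ (or $\Q$). Since $p=0$ is one of the allowed characteristics in \cref{thm:A}(3), that part gives exactly this input, with no condition on the primes dividing $e$.

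The steps are as follows.
\begin{enumerate}
\item Fix $\Gamma$, a union of $e$-cycles, together with $\Lambda$ and $\alpha$.
\item Invoke \cref{thm:A}(3) with $\kk=\C$. Its proof works one characteristic-zero field at a time: choose a primitive $e$th root of unity $q\in\C$ and cyclotomic parameters realizing the components of $\Gamma$ and the weight $\Lambda$. Then \cref{thm:BK} identifies $\Rn(\C)$ with $\Hn(\C)$, and the count $\dim H^*(\fM_{\Lambda,n};\C)=|\Parts|=\dim Z(\Hn(\C))$ from \cref{lem:dim-H} and \cref{thm:HS} combines with injectivity to give that $\phi_\C$ is an isomorphism on every summand $\alpha$.
\item Feed this into \cref{prop:integral-comparison}. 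That result checks that $\phi_\Z$ is an injective ring homomorphism, reduces surjectivity to $\phi_\Q$ by faithful flatness, and then uses injectivity of $\phi_{\mathbb F_p}$ for every prime $p$ to clear denominators. The conclusion is that $\phi_\Z$ is an isomorphism onto $Z(\Rn[\alpha](\Z))$.
\end{enumerate}

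The only point requiring care is the case where $\alpha$ is not of height $n$ for the chosen $r$, or where $\Lambda$ has several components. I would handle this as \cref{thm:A}(3) does: for any $\alpha\in Q^+_n$, take $n=\operatorname{ht}(\alpha)$ and take $r=\sum_i w_i$. The $\alpha$-summand of the isomorphism $\bigoplus_{\alpha'\in Q^+_n}$ is then an isomorphism.

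The step I expect to be the main obstacle is verifying that \cref{thm:A}(3) really applies for every $\Lambda$ when $\Gamma$ has several cycle components. This requires realizing each component as a distinct $q$-orbit in $\C^\times$. That is possible because there are infinitely many cosets of $\mu_e$ in $\C^\times$, so one can choose the $Q_m$ in distinct orbits, one for each component.
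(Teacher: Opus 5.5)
Your proposal is correct and is the paper's proof. The paper applies \cref{thm:A}(3) in characteristic zero (over $\Q$ rather than $\C$; \cref{prop:integral-comparison} accepts either) to get that $\phi$ is an isomorphism, and then invokes \cref{prop:integral-comparison}. Your extra concerns about heights and about realizing several cycle components as distinct $q$-orbits are already handled inside \cref{thm:A}(3), which you may use as a black box, so they need no further work here.
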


\begin{proof}
By \cref{thm:A}(3), applied over $\Q$, the map $\phi_{\Q}$
is an isomorphism.
The conclusion follows from \cref{prop:integral-comparison}.
\end{proof}

Applying \cref{thm:A}(1) on the other side of the Brundan--Kleshchev isomorphism of \cref{thm:BK}, we obtain the proof of \cref{thm:B}.

\begin{proof}[Proof of \cref{thm:B}]

  Consider the subset $I\subset \kk^\times$ of \eqref{eq:I-E} and the graph $\Gamma$ of \cref{thm:BK}.  For $\alpha=\sum_iv_i\alpha_i\in Q^+_n$, let $\Hn[\alpha]$ be the summand of $\Hn$ on which the simultaneous generalized eigenvalues of $L_1,\dots,L_n$ form the unordered tuple in which each $i\in I$ occurs $v_i$ times; \cref{thm:BK} identifies $\Hn[\alpha]$ with $\Rn[\alpha]$, and $\Hn=\bigoplus_\alpha\Hn[\alpha]$.  Let $n_{i;j}\in Z(\Hn[\alpha])$ be the $j$th elementary symmetric polynomial in the nilpotent parts of those $L_a$ whose generalized eigenvalue is $i$.  The isomorphism of \cref{thm:BK} can be chosen so that $n_{i;j}$ is the image of $i^j e_{i;j}$; in the notation of \cite[Th.~3.10]{WebBKnote}, take $b(u)=1+u$, so that $L_ae(\mathbf{i})$ corresponds to $i_a(1+y_a)e(\mathbf{i})$.

  We now write each $n_{i;j}$ as a symmetric polynomial in the $L_a$.  Let $C\subset I$ be the set of eigenvalues of $L_1,\dots,L_n$ on $\Hn$, and choose $N$
such that $(L_a-i_a)^N e(\mathbf{i})=0$ for all $a$ and
$\mathbf{i}$.
For $i\in C$, let $\ell_i(T)=\prod_{c\in C\setminus\{i\}}\frac{T-c}{i-c}$ be the Lagrange interpolation polynomial, and set
\[
  p_i(T)=1-\bigl(1-\ell_i(T)^N\bigr)^N.
\]
Then
\[
  p_i(T)\equiv\delta_{ic}\pmod{(T-c)^N}
  \qquad(c\in C).
\]
Indeed, we have $1-\ell_i(T)^N\equiv 1 \pmod {(T-c)^N}$ if $c\neq i$; and since $T-i$ divides $1-\ell_i^N$, we have $(T-i)^N$ divides $(1-\ell_i^N)^N$.

Set $D_i(T)=(T-i)p_i(T)$. Then
\begin{equation}\label{eq:JM-colored-nilpotent-action}
  p_i(L_a)e(\mathbf{i})=
  \begin{cases}
   e(\mathbf{i}),&i_a=i,\\
    0,&i_a\neq i,
  \end{cases}\qquad   D_i(L_a)e(\mathbf{i})=
  \begin{cases}
    (L_a-i)e(\mathbf{i}),&i_a=i,\\
    0,&i_a\neq i.
  \end{cases}
\end{equation}
This means that $p_{i_1}(L_1)\cdots p_{i_n}(L_n)$ is the idempotent $e(\mathbf{i})$ for any $\mathbf{i}=(i_1,\dots,i_n)\in C^n$, and since $e(\mathbf{i})=0$ for $\mathbf{i}\notin C^n$, $\sum_{\mathbf{i}\in I^\alpha\cap C^n}p_{i_1}(L_1)\cdots p_{i_n}(L_n)=1_\alpha$ is manifestly symmetric and the block idempotent of $\Hn[\alpha]$.  In particular, $1_\alpha\in Z(\Hn)^\JM$, and $Z(\Hn)^\JM=\bigoplus_\alpha 1_\alpha Z(\Hn)^\JM$.

Under the above identification, for each $\mathbf{i}\in I^\alpha$,
\eqref{eq:JM-colored-nilpotent-action} gives
\begin{align}
  e_j\bigl(D_i(L_1),\ldots,D_i(L_n)\bigr)e(\mathbf{i})
  &=
  \sum_{\substack{1\leq a_1<\cdots<a_j\leq n\\
                   i_{a_1}=\cdots=i_{a_j}=i}}
  (L_{a_1}-i)\cdots(L_{a_j}-i)e(\mathbf{i})
  \notag\\
  &=i^j e_j(y_a\mid i_a=i)e(\mathbf{i})
  \notag\\
  &=n_{i;j}e(\mathbf{i}).
  \label{eq:JM-colored-symmetric-action}
\end{align}
The second equality uses
$(L_a-i)e(\mathbf{i})=iy_ae(\mathbf{i})$ when $i_a=i$,
and the last equality is the definition of $n_{i;j}$.
These equalities also hold when $e(\mathbf{i})=0$.

Since $\sum_{\mathbf{i}\in I^\alpha}e(\mathbf{i})=1$
in $\Hn[\alpha]$, summing
\eqref{eq:JM-colored-symmetric-action} gives
\begin{align}
  e_j\bigl(D_i(L_1),\ldots,D_i(L_n)\bigr)
  &=
  \sum_{\mathbf{i}\in I^\alpha}
  e_j\bigl(D_i(L_1),\ldots,D_i(L_n)\bigr)e(\mathbf{i})
  \notag\\
  &=i^j\sum_{\mathbf{i}\in I^\alpha}
  e_j(y_a\mid i_a=i)e(\mathbf{i})
  \notag\\
  &=i^j e_{i;j}=n_{i;j}.
  \label{eq:JM-colored-symmetric-elements}
\end{align}

So $n_{i;j}=e_j\bigl(D_i(L_1),\ldots,D_i(L_n)\bigr)1_\alpha$ lies in $Z(\Hn)^\JM$.

  Conversely, on a fixed summand $\Hn[\alpha]$ every symmetric polynomial in the $L_a$ is a polynomial in the scalars $i$ and the elements $n_{i;j}$.  Thus the subalgebra generated by the $n_{i;j}$ is exactly the Jucys--Murphy center $Z(\Hn[\alpha])^\JM$.
  We have a commutative diagram
  \[\begin{tikzcd}
     & S\ar[d,"\kappa_a"]\ar[dl,"\kappa_g"']\ar[dr,"\kappa_H"] & \\
  H^*(\fM(\Bv,\Bw);\kk)\ar[r,"\phi_\kk"] & Z(\Rn[\alpha](\kk)) \ar[r,"\sim"']& Z(\Hn[\alpha]),
  \end{tikzcd}\]
  where, by definition, the map $\kappa_H$ sends $c_{i;j}$ to $i^{-j}n_{i;j}$.  Since $\kappa_g$ is surjective and the bottom row is injective, the image of $\kappa_H$, which is $Z(\Hn[\alpha])^\JM$, has dimension $|\Parts[\alpha]|$ by \cref{lem:dim-H}.  If this image were proper for any $\alpha$, summing over all $\alpha\in Q^+_n$ would give
  \[\dim Z(\Hn)>\sum_{\alpha\in Q^+_n}|\Parts[\alpha]|=|\Parts|,\]
  contradicting \cref{thm:HS}.  Thus $\kappa_H$ is surjective for every $\alpha$, and $Z(\Hn)=Z(\Hn)^\JM$.
\end{proof}

Careful investigation of the proof of \cref{thm:B}
reveals that the proof does not use the classification of the blocks of
$\Hn$ from \cite{LyleMathas07}. In several places the proof uses
\cref{L:JMCenter}, but this follows easily from the relations.  Following \cite[Def.~3.10, Th.~3.26]{DJM:cyc}, $\Hn$ is a cellular algebra whose cell modules are the Specht modules $S^\blam$ for $\blam\in\Parts$.

\begin{corollary}[{Lyle--Mathas~\cite[Th.~A]{LyleMathas07}}]\label{C:blocks}
  Let $\kk$ be a field, $q\neq 1$ and $\bQ\in(\kk^\times)^r$.  For every $\alpha\in Q^+_n$ with $\Parts[\alpha]\neq\emptyset$, the summand $\Hn[\alpha]$ is a block of $\Hn$, and the Specht module $S^\blam$ lies in $\Hn[\alpha]$ if and only if $\blam\in\Parts[\alpha]$.  In particular, two Specht modules $S^\blam$ and $S^\bmu$ lie in the same block if and only if $\blam$ and $\bmu$ have the same residues, counted with multiplicity.
\end{corollary}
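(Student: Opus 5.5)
The plan is to deduce the block classification from the equality $Z(\Hn)=Z(\Hn)^\JM$ of \cref{thm:B}, refined to each summand. The proof of \cref{thm:B} shows that $Z(\Hn[\alpha])$ is the image of $\kappa_H$, and that $\kappa_H$ factors as $\phi_\kk\circ\kappa_g$ followed by the Brundan--Kleshchev isomorphism. Hence
\[Z(\Hn[\alpha])\cong H^*(\fM_{\Lambda,\alpha};\kk)\]
as graded algebras. Since $\fM_{\Lambda,\alpha}$ is connected (it retracts onto the core, and the Nakajima quiver varieties are connected), $H^0$ is one-dimensional. All classes $c_{i;j}$ with $j\geq 1$ have positive degree, so they are nilpotent in the finite-dimensional algebra $H^*$. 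Consequently $H^*(\fM_{\Lambda,\alpha};\kk)$ is a local ring, provided it is nonzero. By \cref{lem:dim-H} this happens exactly when $\Parts[\alpha]\ne\emptyset$. A summand whose center is local has no nontrivial central idempotents, so $\Hn[\alpha]$ is indecomposable as an algebra, and therefore it is a block. This is the key step: the block idempotent $1_\alpha$ built from the polynomials $p_i(L_a)$ in the proof of \cref{thm:B} is primitive in the center.

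If one wants to avoid using connectedness of $\fM_{\Lambda,\alpha}$, there is an alternative argument for locality. The center is generated by the $n_{i;j}$, each of which acts nilpotently: it is a sum of products of the nilpotent elements $L_a-i$ restricted to the idempotents $e(\mathbf{i})$, and these commute. So $Z(\Hn[\alpha])=\kk1_\alpha\oplus(\text{nilpotent ideal})$, which is local. This purely algebraic route is cleaner, and it is the one I would write.

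Next I would locate the Specht modules. The elements $L_a$ act on $S^\blam$ triangularly with respect to the standard (Murphy) basis: on the basis vector indexed by a standard tableau $\mathfrak t$, $L_a$ acts by the residue of the node containing $a$, modulo more dominant terms. This is \cite{DJM:cyc} / \cite{ArikiKoike94}. Hence the multiset of generalized eigenvalues of $L_1,\dots,L_n$ on $S^\blam$ is the multiset of residues of $\blam$. Acting by $1_\alpha=\sum p_{i_1}(L_1)\cdots p_{i_n}(L_n)$ therefore gives the identity on $S^\blam$ if $\blam\in\Parts[\alpha]$ and zero otherwise. So $S^\blam$ lies in $\Hn[\alpha]$ exactly when $\blam\in\Parts[\alpha]$.

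Two Specht modules with the same residue multiset thus lie in the same block $\Hn[\alpha]$. Conversely, those with different residues lie in different summands, which are already separated by the central idempotents $1_\alpha$.

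The remaining point is that $\Hn[\alpha]\neq0$ exactly when $\Parts[\alpha]\neq\emptyset$. By cellularity, every simple module is a quotient of some $S^\blam$, so it lies in $\Hn[\beta]$ for some $\beta$ with $\Parts[\beta]\ne\emptyset$. Conversely, when $\Parts[\alpha]\ne\emptyset$ the summand contains a Specht module and so is nonzero. The main obstacle I expect is making the eigenvalue claim on Specht modules precise without invoking the Lyle--Mathas classification. The triangularity of JM elements on the Murphy basis is standard and independent of that classification, so this should be manageable.
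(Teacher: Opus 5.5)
Your proposal is correct and is essentially the paper's proof. The paper also deduces that $\Hn[\alpha]$ is a block from $Z(\Hn[\alpha])=Z(\Hn[\alpha])^{\JM}\cong Z(\Rn[\alpha](\kk))^{\JM}$ being local: it phrases this through the positive grading on a quotient of a polynomial ring, where you use nilpotency of the generators $n_{i;j}$, which is the same point. It places the Specht modules by the generalized eigenvalues of the $L_a$ coming from their triangular action on the Murphy basis, citing \cite[Prop.~3.7]{JamesMathas:cycJantzenSum}.

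Your extra cellularity check that $\Hn[\alpha]=0$ when $\Parts[\alpha]=\emptyset$ is not needed for the statement, but it is harmless. It also follows at once from $\dim Z(\Hn[\alpha])=|\Parts[\alpha]|$ in the proof of \cref{thm:B}, since $1_\alpha=\kappa_H(1)$.
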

\begin{proof}
  We have now proven that $Z(\Hn[\alpha])\cong Z(\Rn[\alpha](\kk))^\JM$.  The latter ring is a quotient of a polynomial ring in positively-graded variables by a homogeneous ideal, so it is a positively-graded local ring.  This shows that $\Hn[\alpha]$ has no non-trivial central idempotents, that is, it is a block.  The match of Specht modules to blocks is just computing the generalized eigenvalues of each $L_a$ on a Specht module, for example, by \cite[Prop.~3.7]{JamesMathas:cycJantzenSum}.
\end{proof}
\begin{remark}\label{rem:p-divides-e}
  Suppose $p\mid e$ and $p\neq e$.  There is no cyclotomic Hecke algebra in characteristic $p$ whose blocks are cyclotomic KLR algebras of type $A^{(1)}_{e-1}$: the orbits of multiplication by $\xi\in\overline{\F}_p^\times$ have size prime to $p$, and those of $x\mapsto x+1$ have size $p$.  So \cref{thm:HS} is not available.  By \cref{thm:A}, $\phi_{\kk}$ is still injective, so $\dim Z(\Rn[\alpha](\kk))\geq |\Parts[\alpha]|$.  Since $\Rn[\alpha](\kk)$ is a symmetric algebra (\cref{lem:adjunction}), $\dim Z(\Rn[\alpha](\kk))=\dim\Tr(\Rn[\alpha](\kk))=\dim \Tr(\Rn[\alpha])\otimes\kk$, which is $|\Parts[\alpha]|$ plus the number of $p$-torsion summands of $\Tr(\Rn[\alpha])$.  Thus $\phi_{\kk}$ is an isomorphism if and only if $\Tr(\Rn[\alpha])$ has no $p$-torsion.
\end{remark}

\section{The center and \texorpdfstring{$K$}{K}-theory of Gieseker
varieties}\label{SS:KTheory}

This final section proves \cref{thm:C}, which connects the $K$-theory
of the quiver varieties with the centers of the cyclotomic Hecke algebras,
building on ideas in \cite{KrylovPaegelowShlykov26,Paegelow26}.

We begin with an equivariant form of Kirwan surjectivity in $K$-theory, for any quiver variety $\fM=\fM(\Bv,\Bw)$ as in \cref{sec:notation}, but allowing edge loops.  The quiver variety $\fM$ has an induced action of $G_\Bw=\prod_iGL(W_i)$, and a commuting action of $(\Gm)^{E}$ acting by scaling each arrow $h$ by the corresponding scalar, and $\bar{h}$ by its inverse.  Finally, there is another action of $\mathbb{S}=\Gm$ by the scalar multiplication on the vector space \[\bigoplus_{h\in E\cup \bar{E}} \Hom(V_{\mathrm{out}(h)},V_{\mathrm{in}(h)})\oplus\bigoplus_{i\in I}\big(\Hom(V_i,W_i)\oplus \Hom(W_i,V_i)\big).\] We'll be interested in a torus $T$ acting on $\fM$ by a homomorphism $T\to G_\Bw\times (\Gm)^{E}\times \mathbb{S}$.  We can then consider the tautological bundles $\mathcal{V}_i$ on $\fM$ as $T$-equivariant vector bundles, by taking the trivial vector bundle with fiber $V_i$ before reduction with trivial $T$-action, and modding out by the diagonal action of $G_\Bv$; we call this the {\bf standard action} on this vector bundle.

For a torus $T$ acting on a variety $X$, write $\KC_T(X)$ for the Grothendieck group of $T$-equivariant coherent sheaves on $X$.

\begin{lemma}\label{lem:Kirwan-K}
  For $T$ as above, the Kirwan map $R(G_\Bv\times T)\to\KC_T(\fM)$, sending a representation to the associated vector bundle, is surjective.  In particular, taking $T$ trivial, the Grothendieck group $K(\fM)$ of \eqref{eq:ordinary-K-definition} is generated as a ring by the classes $[\Lambda^k\mathcal{V}_i]$ and $[\Lambda^{v_i}\mathcal{V}_i]^{-1}$, for $i\in I$ and $1\leq k\leq v_i$.
\end{lemma}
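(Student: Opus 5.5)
The natural route is to obtain the $K$-theoretic statement from the diagonal decomposition technique of McGerty--Nevins, or more directly from the general $K$-theoretic Kirwan surjectivity for hyperkähler/symplectic reductions. Concretely, $\fM=\mu^{-1}(0)^{s}/G_\Bv$, where $\mu^{-1}(0)^s$ is the stable locus of the moment map fiber in the representation space $M$. The Kirwan map factors as
\[
R(G_\Bv\times T)=\KC_{G_\Bv\times T}(M)\to \KC_{G_\Bv\times T}(\mu^{-1}(0))\to \KC_{G_\Bv\times T}(\mu^{-1}(0)^s)\cong \KC_T(\fM),
\]
where the first identification is homotopy invariance (Thom isomorphism) for the linear space $M$, and the last uses that $G_\Bv$ acts freely on the stable locus. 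The second arrow, restriction to an open subset, is surjective by the localization sequence for $G$-theory. So the whole content sits in the first arrow, i.e.\ surjectivity of $R(G_\Bv\times T)\to \KC_{G_\Bv\times T}(\mu^{-1}(0)^{s})$, and since open restriction is surjective it suffices to show the restriction $\KC_{G_\Bv\times T}(M)\to \KC_{G_\Bv\times T}(\mu^{-1}(0)^s)$ is onto.

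I would prove this last surjectivity by invoking McGerty--Nevins \cite{mcgertyKirwanSurjectivity2018}. Their main theorem is stated for cohomology, but their method establishes surjectivity for derived categories of coherent sheaves: they show that $D^b\mathrm{Coh}_{G}(\mu^{-1}(0)^s)$ is generated by restrictions of objects from $M$, via Kempf--Ness/HKKN stratifications and Halpern-Leistner's ``grade restriction windows.'' Passing to Grothendieck groups then gives surjectivity onto $\KC$. Their argument is insensitive to an additional torus $T$ acting compatibly, provided $T$ commutes with $G_\Bv$ and $\mu$ is $T$-equivariant up to a character; this holds for $G_\Bw$, the edge tori (the moment map is preserved because $h$ and $\bar h$ scale inversely), and $\mathbb{S}$ (which scales $\mu$ by a character). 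Loops cause no trouble here, since the $K$-theoretic argument does not use the root-theoretic hypotheses of the earlier sections.

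The ``in particular'' then follows by taking $T$ trivial. Here $R(G_\Bv)=\bigotimes_i R(GL(V_i))$ is generated as a ring by the exterior powers $\Lambda^k V_i$ together with $(\det V_i)^{-1}$. Under the Kirwan map these go to $[\Lambda^k\mathcal{V}_i]$ and $[\Lambda^{v_i}\mathcal{V}_i]^{-1}$, and since the Kirwan map is a ring homomorphism its image is the ring generated by these classes. One should also note that $\fM$ is smooth, so $\KC(\fM)$ coincides with the Grothendieck group of vector bundles $K(\fM)$ of \eqref{eq:ordinary-K-definition}.

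The main obstacle is justifying that McGerty--Nevins really yields the $K$-theoretic (derived) statement $T$-equivariantly, rather than only the cohomological one. If that citation is insufficient, a fallback is to deduce it from the existence of a $T$-stable Białynicki-Birula-type affine paving when a generic $T$ has isolated fixed points. That fallback does not cover general $T$, however, so I would lean on the derived Kirwan surjectivity (Halpern-Leistner), applied to $\mu^{-1}(0)$ with its $T$-action.
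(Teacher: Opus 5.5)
\textbf{Verdict: there is a genuine gap.} The equivariant statement, which is the real content of the lemma, is asserted rather than proved. Your fallback routes do not supply it.

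For $T$ trivial your argument works as a citation. McGerty--Nevins prove that $D^b\mathrm{Coh}(\fM)$ is generated by tautological bundles. Your deduction of the ring generators $[\Lambda^k\mathcal{V}_i]$, $[\Lambda^{v_i}\mathcal{V}_i]^{-1}$ is the same as the paper's.

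The problem is your claim that the McGerty--Nevins argument is insensitive to $T$, because you attribute to them an argument they do not give. They use no Kempf--Ness stratifications or grade-restriction windows. Their proof has three ingredients:
\begin{enumerate}
\item a projective compactification $\overline{\fM}$, built as a moduli space of stable representations of an enlarged quiver with extra relations;
\item a Koszul resolution of the graph $\Delta_\iota\subset\fM\times\overline{\fM}$ of the open embedding $\iota\colon\fM\hookrightarrow\overline{\fM}$, exhibiting $\mathcal{O}_{\Delta_\iota}$ as an iterated cone of bundles $A\boxtimes B$ with $A$ built from tautological bundles;
\item the formula $\iota^*\mathcal{F}\cong Rp_{1*}(\mathcal{O}_{\Delta_\iota}\otimes p_2^*\mathcal{F})$, which builds $\iota^*\mathcal{F}$ from the $A$'s with coefficients $H^k(\overline{\fM},\mathcal{F}\otimes B)$, together with surjectivity of $\iota^*$.
\end{enumerate}
Making this $T$-equivariant is what must be checked. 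Write $\chi_h$ for the character by which $T$ scales an arrow $h$ of the doubled quiver and $\eta$ for its $\mathbb{S}$-component, so $\chi_h\chi_{\bar h}=\eta^2$. Then you need:
\begin{enumerate}
\item a $T$-action on $\overline{\fM}$: the enlarged relations are rescaled by $\eta^2$, and stability is preserved;
\item $T$-linearizations of the terms of the Koszul complex that make its differentials equivariant and the section cutting out $\Delta_\iota$ invariant (this forces weight $\eta^2$ on the last term);
\item the observation that character twists can be moved from $A$ to $B$, so the $A$'s carry the standard action and the coefficients, finite-dimensional since $\overline{\fM}$ is projective, lie in $R(T)$.
\end{enumerate}
None of this appears in your proposal.

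Your alternative mechanisms do not close the gap.
\begin{itemize}
\item \emph{The factorization through $\KC_{G_\Bv\times T}(\mu^{-1}(0))$ is circular.} The statement you reduce to is the original one.
\item \emph{The intermediate closed step $R(G_\Bv\times T)\to\KC_{G_\Bv\times T}(\mu^{-1}(0))$ is genuinely not surjective.} Take $T^*\mathbb{P}^1$ (one vertex, $\Bv=1$, $\Bw=2$). Then $\mu^{-1}(0)=\{i_1j_1+i_2j_2=0\}$ is the threefold node. The class of the unstable plane $\{i=0\}$ is not a combination of twists of $[\mathcal{O}_{\mu^{-1}(0)}]$, already non-equivariantly, because that plane generates the class group of the node. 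This singularity of $\mu^{-1}(0)$ is exactly why hyperk\"ahler Kirwan surjectivity is hard and why homotopy invariance of the representation space gives nothing.
\item \emph{Derived Kirwan surjectivity on $\mu^{-1}(0)$ does not help.} Restriction from $D^b\mathrm{Coh}(\mu^{-1}(0)/G_\Bv)$ to $\fM$ is surjective on Grothendieck groups for formal reasons (localization). But the objects $V\otimes\mathcal{O}$ generate at most the perfect complexes on this singular stack. You would need a window theorem for perfect complexes on the quasi-smooth stack, and you have not verified its weight hypotheses.
\end{itemize}

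The missing step is therefore the equivariant verification of McGerty--Nevins' compactification and graph-resolution argument. That verification is what the paper's proof carries out.
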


\begin{proof}
  Without the torus, the first statement follows from \cite[Th.~1.6(2)]{mcgertyKirwanSurjectivity2018}: $D^b\mathrm{Coh}(\fM)$ is generated, using direct sums, shifts and cones, by finitely many tautological bundles.  We check that the proof is $T$-equivariant, as McGerty and Nevins anticipate \cite[p.~163]{mcgertyKirwanSurjectivity2018}, and we use their notation.

  The image of $T$ in $G_\Bw$ is a torus, so we may choose bases of the $W_i$ in which it acts diagonally.  Regarding the components of $i$ and $j$ in these bases as arrows from and to an extra vertex $\infty$ gives the Crawley-Boevey quiver $Q$ of \cite[\S3.2]{mcgertyKirwanSurjectivity2018}, with dimension vector $\alpha$ equal to $\Bv$ on $I$ and to $1$ at $\infty$, and $G(\alpha)\cong G_\Bv$.  Let $\eta\colon T\to\mathbb{S}$ be the last component of the homomorphism $T\to G_\Bw\times(\Gm)^E\times\mathbb{S}$.  Then $T$ scales each arrow $h$ of the doubled quiver by a character $\chi_h$, with $\chi_h\chi_{\bar h}=\eta^2$: the factors $G_\Bw$ and $(\Gm)^E$ scale $h$ and $\bar h$ by inverse characters, and $\mathbb{S}$ scales every arrow by $\eta$.

  First, we note that the compactification $\overline{\fM}$ is $T$-equivariant.  Let $t\in T$ act on $\operatorname{Rep}(Q^{\mathrm{gtr}},\alpha^{\mathrm{gtr}})$ by $a_{h,n}\mapsto\chi_h(t)a_{h,n}$ and $e_{i,n}\mapsto e_{i,n}$.  This preserves the relations of \cite[Def.~4.3]{mcgertyKirwanSurjectivity2018}: it multiplies the preprojective relations by $\eta(t)^2$, and the relations $ea=ae$ are linear in the $a_{h,n}$.  It commutes with $G^{\mathrm{gtr}}$, so it preserves the $\theta^{\mathrm{gtr}}$-stable locus.  Hence $T$ acts on the compactification $\overline{\fM}$ and on the closure of $\fM$ in it, and $\iota=\mathrm{Ind}\colon\fM\to\overline{\fM}$ is $T$-equivariant.

  Second, consider the resolution of the graph $\Delta_\iota\subset\fM\times\overline{\fM}$ of $\iota$.  Let $T$ act trivially on $L(V_{\bullet,0},W_{\bullet,0})$, by $\chi_h$ on the summand of $E(V_{\bullet,0},W_{\bullet,1})$ indexed by $h$, and by $\eta^2$ on $L(V_{\bullet,0},W_{\bullet,2})$.  With this choice of action, the maps $\partial_0,\partial_1$ of \cite[(5.3)]{mcgertyKirwanSurjectivity2018} are $T$-equivariant: the $h$-component of $\partial_0(\phi)$ is linear in arrows of weight $\chi_h$, and each term of $\partial_1(\psi)$ pairs a component $\psi_h$ with an arrow of weight $\chi_{\bar h}$, so it has weight $\eta^2$.  The section $s$ is $\partial_0$ of the invariant section $1\in\Hom(V_{\infty,0},W_{\infty,0})$, so it is $T$-invariant, and the Koszul resolution \cite[(6.2)]{mcgertyKirwanSurjectivity2018} of $\mathcal O_{\Delta_\iota}$ is $T$-equivariant.  Just as in the proof of \cite[Th.~1.6]{mcgertyKirwanSurjectivity2018}, this exhibits $\mathcal O_{\Delta_\iota}$, in $D^b\mathrm{Coh}^T(\fM\times\overline{\fM})$, as an iterated cone on bundles $A\boxtimes B$, where $A$ is a tensor product of bundles $S_\lambda(E_j)$ and $B$ is a tensor product of bundles $S_\mu(F_j)$, for tautological or trivial bundles $E_j$ on $\fM$ and $F_j$ on $\overline{\fM}$.  Since $(A\otimes\chi)\boxtimes B=A\boxtimes(B\otimes\chi)$ for a character $\chi$ of $T$, we can take $T$ to act on each $A$ with the standard action, and put the twists by characters into $B$.

  Now let $\mathcal F$ be a $T$-equivariant coherent sheaf on $\overline{\fM}$, and write $p_1,p_2$ for the projections of $\fM\times\overline{\fM}$.  Since $\iota$ is an open immersion, $\iota^*\mathcal F\cong Rp_{1*}(\mathcal O_{\Delta_\iota}\otimes^L p_2^*\mathcal F)$, and $p_1$ is proper because $\overline{\fM}$ is projective.  Thus we can express $\iota^*\mathcal F$ in $D^b\mathrm{Coh}^T(\fM)$ as an iterated cone on objects of the form $A\otimes U$, where $U$ is a complex of $T$-representations whose cohomology groups are the finite-dimensional vector spaces $H^k(\overline{\fM},\mathcal F\otimes B)$.  That is, $[\iota^*\mathcal F]$ is a linear combination of the classes $[A]$, with the standard $T$-action, with coefficients in $\KC_T(\mathrm{pt})=R(T)$.  This shows that the image of $\iota^*\colon\KC_T(\overline{\fM})\to\KC_T(\fM)$ lies in the image of the Kirwan map.  Since $\iota^*$ is surjective by \cite[\S5.2.14]{CG97}, the Kirwan map is surjective.

The last statement holds because $R(G_\Bv)$ is generated as a ring by the exterior powers of the defining representations $V_i$ and the inverses of their determinants.
\end{proof}
Following \cite[Section~2.1 and Definition~3.1]{KrylovPaegelowShlykov26}, but working over $\Z$, set
\[
   \SA =\Z[q^{\pm1},Q_1^{\pm1},\ldots,Q_r^{\pm1}],
 \qquad\text{and}\qquad
 \SAc=\SA[(q-1)^{-1}],
\]
and consider the Hecke algebra $\HSAcn=\Hn(\SAc)$ over $\SAc$
with parameters~$q$ and~$\bQ$.  We write $\SA_\C=\C\otimes_\Z\SA$ and
$\SA^\circ_\C=\C\otimes_\Z\SAc$ for the complex forms, which are the rings
used in \cite{KrylovPaegelowShlykov26}.

Let $\mathcal{G}(n,r)$ be the Gieseker variety, which is the Nakajima quiver
variety of the Jordan quiver (a single vertex with one loop) with dimension
vector~$n$ and framing~$r$.  By \cite{NY03}, $\mathcal{G}(n,r)$ is the moduli
space of framed torsion-free sheaves on $\mathbb{P}^2(\C)$ of rank $r$ and
second Chern class $n$.  Let $\Lrn$ denote the preimage of $0$ under the
affinization morphism $\mathcal{G}(n,r)\rightarrow \mathcal{G}_0(n,r)$.  The
torus $\TC=\mathbb{T}\times\mathbb{T}_r$, where $\mathbb{T}$ is the maximal
torus of $SL_2$ and $\mathbb{T}_r$ that of $GL_r$, acts on both.  We write
$\KT(X)$ for the Grothendieck group of $\TC$-equivariant coherent
sheaves on~$X$, so that $\KT(\mathrm{pt})=\SA$, and
$\KT(X;\C)=\C\otimes_\Z\KT(X)$.

Krylov, Paegelow and Shlykov \cite[Theorem~3.16]{KrylovPaegelowShlykov26}
proved that the equivariant $K$-theory of the Gieseker variety is isomorphic
to the Jucys--Murphy center of the cyclotomic Hecke algebra $\Hn(\SA_\C)$ as an
algebra over the equivariant $K$-theory of a point: there is an isomorphism
$\KT(\mathcal{G}(n,r);\C)\cong Z(\Hn(\SA_\C))^\JM$ sending the class of the $i$th
exterior power of the tautological bundle to $e_i(L_1,\dots,L_n)$.  There
exists a perfect pairing $\langle-,-\rangle$ between
$\KT(\mathcal{G}(n,r);\C)$ and $\KT(\Lrn;\C)$
\cite[(4.1)]{KrylovPaegelowShlykov26}, and by
\cite[Lemma~4.5]{KrylovPaegelowShlykov26} there is a unique isomorphism
\[
  \psi\colon\Frac(\SA_\C)\otimes_\SA\KT(\Lrn)\longrightarrow\Frac(\SA_\C)\otimes_\SA\Tr(\Hn(\SA))
\]
of $Z(\Hn(\SA_\C))$-modules which intertwines this pairing with the trace
pairing $(z,[h])\mapsto\tau(zh)$, where $\tau$ is the symmetrizing form of
$\Hn$ \cite[Lemma~2.6]{huCocenterCyclotomic2022}.  Krylov, Paegelow and
Shlykov formulated the following conjecture, which asserts that $\psi$
restricts to the lattices.  As throughout
\cite[Section~4]{KrylovPaegelowShlykov26}, $q\neq 1$. By \cref{rem:q-1}
below, the conjecture cannot hold at $q=1$.

\begin{conjecture}[Compare {\cite[Conjecture~4.6]{KrylovPaegelowShlykov26}}]
\label{conj:KPS}
There is a natural $Z(\HSAcn)$-module isomorphism
$\SAc\otimes_{\SA}\KT(\Lrn)\cong\Tr(\HSAcn)$.
\end{conjecture}

The Gieseker variety is the quiver variety of the Jordan quiver with $\Bv=n$ and $\Bw=r$, so $G_\Bv=GL_n$ and $G_\Bw=GL_r$.  Here $E$ is the single loop, and $(\Gm)^E$ is the maximal torus $\mathbb T$ of $SL_2$, scaling $B_1$ and $B_2$ by inverse scalars.  So $\TC=\mathbb T\times\mathbb T_r$ acts through its inclusion in $(\Gm)^E\times G_\Bw$, with trivial component in $\mathbb{S}$, and \cref{lem:Kirwan-K} shows that the Kirwan map $R(GL_n\times\TC)\to\KT(\mathcal{G}(n,r))$, which sends a representation to the associated vector bundle, is surjective.  After extending scalars to $\C$, this is \cite[Proposition~3.13]{KrylovPaegelowShlykov26}, which Krylov, Paegelow and Shlykov deduce from \cite[Cor.~1.4]{mcgertyKirwanSurjectivity2018} by localization. The following proposition proves a generalization to the integral case of \cite[Conjecture~4.6]{KrylovPaegelowShlykov26}. Note that we add the part $\SAc\otimes_{\SA}$ before $\KT(\Lrn)$, which is necessary.

\begin{proposition}\label{P:MainC}\hfill
  \begin{enumerate}
    \item Let $R$ be a commutative domain in which $q$, $Q_1,\dots,Q_r$ and
      $q-1$ are invertible.  Then $Z(\Hn(R))=Z(\Hn(R))^\JM$, and this is a
      free $R$-module of rank $|\Parts|$.
    \item The isomorphisms of
      \cite[Theorem~3.16 and Lemma~4.5]{KrylovPaegelowShlykov26} restrict to
      isomorphisms of $Z(\HSAcn)$-modules
      \[
        \SAc\otimes_{\SA}\KT(\mathcal{G}(n,r))\cong Z(\HSAcn)
        \qquad\text{and}\qquad
        \SAc\otimes_{\SA}\KT(\Lrn)\cong\Tr(\HSAcn).
      \]
      In particular, \cref{conj:KPS} holds.
   \end{enumerate}
\end{proposition}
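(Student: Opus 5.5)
The plan is to prove part (1) by combining three ingredients: \cref{thm:B} at the residue fields of $R$, freeness of $\Hn(R)$, and a rank count. Part (2) then follows by restricting the isomorphisms of Krylov, Paegelow and Shlykov to lattices.

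For (1), let $K=\Frac(R)$. The algebra $\Hn(R)$ is $R$-free with the Ariki--Koike basis, and $Z(\Hn(R))=Z(\Hn(K))\cap\Hn(R)$. By \cref{thm:B} over $K$, $Z(\Hn(K))=Z(\Hn(K))^\JM$, of dimension $|\Parts|$.

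The real content is integrality: every element of $Z(\Hn(R))$ should be an $R$-combination of symmetric polynomials in the $L_a$, not merely a $K$-combination. I would first treat the universal ring $\SAc$ and then base change. Over $\SAc$ the natural candidate for a lattice is the span $M$ of the symmetric monomials $m_\lambda(L)$ in the $L_a$. Choose finitely many of these, $m_1,\dots,m_N$ with $N=|\Parts|$, forming a $K$-basis.

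The key claim is that the images $\bar m_1,\dots,\bar m_N$ are linearly independent in $\Hn(\kappa)$ for every residue field $\kappa$ of $\SAc$ at a prime. Given this, the $R$-span of the $m_j$ is a saturated submodule of the free module $\Hn(R)$, by Nakayama's lemma and torsion-freeness over a domain. It therefore contains every central element.

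To obtain independence at residue fields, use that $q\neq1$ in $\kappa$, because $q-1$ is a unit. Then \cref{thm:B} gives $\dim Z(\Hn(\kappa))^\JM=|\Parts|$. The specialisations of the elementary symmetric polynomials $e_j(L)$ generate this JM center, but a fixed choice of $m_j$ need not stay independent. So I would argue the other way around.

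By the rank-function argument for the free module $\Hn(R)$, $\dim_\kappa Z(\Hn(\kappa))\ge\operatorname{rank}_R$ of the saturation of $Z(\Hn(R))$. The centre is the kernel of the commutator map $\Hn\to\operatorname{End}(\Hn)$, $z\mapsto([z,T_a])_a$. Its rank is constant, equal to $\dim\Hn-|\Parts|$, at all points of $\operatorname{Spec}R$, by \cref{thm:HS} at every residue field. A map of free modules over a domain with constant rank at all fibres has a kernel that is a direct summand of rank $|\Parts|$, and its formation commutes with base change.

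Hence $Z(\Hn(R))$ is free of rank $|\Parts|$ and $Z(\Hn(R))\otimes\kappa=Z(\Hn(\kappa))$. Now the JM subalgebra $Z(\Hn(R))^\JM\subseteq Z(\Hn(R))$ surjects onto $Z(\Hn(\kappa))$ for every $\kappa$, by \cref{thm:B}. By Nakayama's lemma at each localisation, the two modules are equal. This constant-rank step is what I expect to need the most care; in particular, the reduction to local rings uses that $R$ is a domain and that $\Hn(R)$ is finite free.

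For (2), apply (1) with $R=\SAc$. The Krylov--Paegelow--Shlykov isomorphism $\KT(\mathcal{G}(n,r);\C)\cong Z(\Hn(\SA_\C))^\JM$ sends exterior powers of the tautological bundle to $e_i(L)$.

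By \cref{lem:Kirwan-K}, $\KT(\mathcal{G}(n,r))$ is generated over $\SA$ by the classes $[\Lambda^k\mathcal V]$, $[\Lambda^n\mathcal V]^{-1}$ and the representations of $\TC$. So the integral form maps onto the $\SAc$-algebra generated by the $e_i(L)$ and $e_n(L)^{-1}$. Here $e_n(L)=L_1\cdots L_n$ is a unit, so this algebra is $Z(\HSAcn)^\JM=Z(\HSAcn)$. Injectivity over $\SAc$ follows from injectivity after tensoring with $\C$, since $\KT(\mathcal{G}(n,r))$ is free: it has a $\TC$-fixed-point cell decomposition.

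For the trace, $\Tr(\HSAcn)$ is $\SAc$-dual to $Z(\HSAcn)$ under the symmetrising form $\tau$. The duality is perfect because the two modules specialise compatibly and have equal dimensions at each fibre, by (1). Likewise $\SAc\otimes\KT(\Lrn)$ is dual to $\SAc\otimes\KT(\mathcal{G}(n,r))$ under the pairing of \cite[(4.1)]{KrylovPaegelowShlykov26}, which I take to be perfect over $\SA$ by the same cellular argument. Since $\psi$ intertwines these pairings, it carries the dual lattice to the dual lattice, giving $\SAc\otimes\KT(\Lrn)\cong\Tr(\HSAcn)$ as $Z(\HSAcn)$-modules.
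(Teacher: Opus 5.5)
Your treatment of (2) is essentially the paper's. For (1) you take a different route.

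The paper cites \cite[Theorem~1.4(2)]{huCocenterCyclotomic2022} for two facts: $Z(\Hn(R))$ is free of rank $|\Parts|$, and $F\otimes_R Z(\Hn(R))\to Z(\Hn(F))$ is an isomorphism for every field $F$ over $R$. It then runs the same Nakayama argument you do.

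You instead derive base change from the field-level dimension formula \cref{thm:HS} alone. The commutator map $\Hn(R)\to\Hn(R)^{\oplus n}$ (its target is not $\operatorname{End}(\Hn)$) has rank $\dim\Hn-|\Parts|$ at every point of $\operatorname{Spec}R$, including the generic point. So its cokernel is finitely presented of constant fibre rank over a reduced ring, hence projective. Image and kernel then split off, and $Z(\Hn(R))$ is a direct summand whose formation commutes with arbitrary base change.

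This is correct, and it avoids the integral half of the Hu--Shi theorem. The exploratory paragraphs about the monomials $m_j$ can simply be deleted. The same argument applied to $\Hn\otimes\Hn\to\Hn$, using $\dim\Tr=\dim Z$ for the symmetric algebras $\Hn(\kappa)$, makes your ``duality is perfect'' step in (2) precise.

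The gap is the sentence ``Hence $Z(\Hn(R))$ is free of rank $|\Parts|$.'' A direct summand of a finite free module over a domain is only finitely generated projective. For an arbitrary domain $R$, for instance a Dedekind domain with nontrivial class group, that is genuinely weaker than the freeness the proposition asserts.

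Your ingredients do suffice to close it:
\begin{enumerate}
\item Apply the constant-rank argument over the domain $\SAc$. This gives $Z(\Hn(R))\cong R\otimes_{\SAc}Z(\HSAcn)$ via the canonical map $\SAc\to R$.
\item Your proof of the first isomorphism in (2) uses only $Z(\HSAcn)^\JM=Z(\HSAcn)$ and the freeness of $\KT(\mathcal{G}(n,r))$, not freeness of the center. So it exhibits $Z(\HSAcn)$ as isomorphic to the free $\SAc$-module $\SAc\otimes_{\SA}\KT(\mathcal{G}(n,r))$, of rank $|\Parts|$.
\item Freeness over every such $R$ then follows by base change.
\end{enumerate}
Alternatively, cite Hu--Shi for freeness, as the paper does.
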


\begin{proof}
  Let $F$ be an $R$-algebra that is also a field. Then
  $\Hn(F)\cong F\otimes_{R}\Hn(R)$. Since $q-1$ is invertible in
  $R$, we have $q\neq1$ in $F$.
  Therefore, $Z(\Hn(F))=Z(\Hn(F))^\JM$ by \cref{thm:B}.
  The following diagram commutes:
  \[
     \begin{CD}
        F\otimes_{R}Z(\Hn(R))^\JM
         @>{\id_F\otimes\iota}>>
        F\otimes_{R}Z(\Hn(R))
         @>{\id_F\otimes \pi}>>
        F\otimes_{R}
         \dfrac{Z(\Hn(R))}{Z(\Hn(R))^\JM}
         @>>>0\\
         @V{p_0}VV @V{\wr}VV @. @.\\
        Z\bigl(\Hn(F)\bigr)^\JM @= Z\bigl(\Hn(F)\bigr)
         @. @.
    \end{CD}
  \]
  where $\iota\colon Z(\Hn(R))^\JM\hookrightarrow Z(\Hn(R))$ is
  inclusion, $\pi\colon Z(\Hn(R))\to Z(\Hn(R))/Z(\Hn(R))^\JM$ the natural quotient map, and the vertical maps are the natural base-change maps; the second is an isomorphism by \cite[Theorem~1.4(2)]{huCocenterCyclotomic2022}.
  By definition, $p_0$ is surjective.
  Hence, $\id_F\otimes\iota$ is surjective, so
  \[
    F\otimes_{R}\bigl(Z(\Hn(R))/Z(\Hn(R))^\JM\bigr)=0.
  \]
  By \cite[Theorem~1.4(2)]{huCocenterCyclotomic2022}, $Z(\Hn(R))$ is a
  free $R$-module of rank $|\Parts|$, so in particular it is finitely generated.  Taking $F=R/\mathfrak{m}$ for each
  maximal ideal $\mathfrak{m}$ of $R$, Nakayama's lemma shows that
  $Z(\Hn(R))/Z(\Hn(R))^\JM=0$.  Therefore,
  $Z(\Hn(R))^\JM=Z(\Hn(R))$, which is free of rank $|\Parts|$.  This proves (1).

  For (2), let $R=\SAc$.  By
  \cite[Lemma~7.1.5]{nakajimaQuiverVarieties2001a} (see also
  \cite[Lemma~3.10]{KrylovPaegelowShlykov26}), $\KT(\mathcal{G}(n,r))$ and
  $\KT(\Lrn)$ are free $\SA$-modules, so they are lattices in
  $\KT(\mathcal{G}(n,r);\C)$ and $\KT(\Lrn;\C)$.  The ring $R(GL_n\times\TC)$ is
  generated over $\SA$ by the exterior powers $\Lambda^i\C^n$, for
  $1\leq i\leq n$, and by $(\Lambda^n\C^n)^*$.  By \cref{lem:Kirwan-K} and
  \cite[Theorem~3.16]{KrylovPaegelowShlykov26}, the isomorphism
  $\KT(\mathcal{G}(n,r);\C)\cong Z(\Hn(\SA_\C))^\JM$ therefore maps
  $R\otimes_{\SA}\KT(\mathcal{G}(n,r))$ onto the $R$-subalgebra of
  $Z(\Hn(\SA^\circ_\C))$ generated by $e_1(L),\dots,e_n(L)$ and
  $e_n(L)^{-1}=(L_1\cdots L_n)^{-1}$.  These elements lie in $Z(\Hn(R))$,
  so this subalgebra lies between $Z(\Hn(R))^\JM$ and $Z(\Hn(R))$, which are
  equal by (1).  This gives the first isomorphism.

  The proof of \cite[Lemma~7.1.5]{nakajimaQuiverVarieties2001a} shows that
  $\langle-,-\rangle$ restricts to a perfect pairing between
  $\KT(\mathcal{G}(n,r))$ and $\KT(\Lrn)$ with values in $\SA$:
  with respect to the Bia{\l}ynicki-Birula decomposition, it is block
  triangular, with perfect pairings on the diagonal.  Since $\tau$ is a
  symmetrizing form and $\Tr(\Hn(R))$ is a free $R$-module of finite rank
  (\cite[Th.~1.4(1)]{huCocenterCyclotomic2022}), the trace pairing is
  perfect over $R$.  Dualizing the first isomorphism therefore gives an
  isomorphism $R\otimes_{\SA}\KT(\Lrn)\cong\Tr(\Hn(R))$ which
  intertwines the two pairings.  After extending scalars to $\Frac(\SA_\C)$ it
  agrees with $\psi$, since both intertwine the pairings and the trace
  pairing is nondegenerate.  Hence $\psi$ restricts to an isomorphism
  $R\otimes_{\SA}\KT(\Lrn)\cong\Tr(\Hn(R))$, which is
  $Z(\Hn(R))$-linear because $\psi$ is.  This verifies
  \cite[Conjecture~1.2; Conjecture~4.6]{KrylovPaegelowShlykov26}.  
\end{proof}

\begin{remark}\label{rem:q-1}
  The hypothesis that $q-1$ is a unit cannot be dropped: by the base change statement of \cite[Th.~1.4(2)]{huCocenterCyclotomic2022}, the equality $Z(\Hn(R))=Z(\Hn(R))^\JM$ for $R=\SA$ would specialize to every field with $q=1$.  This fails already for $r=1$, $n\geq 2$ and $q=Q_1=1$: then $\Hn(\kk)=\kk\Sym_n$ and $L_1=\dots=L_n=1$, so the Jucys--Murphy center is $\kk$, whereas $Z(\kk\Sym_n)$ has a basis of class sums.  See also Ariki's example \cite[\S 2]{Ariki:can}.
\end{remark}

Let $X$ be a smooth quasi-projective complex variety.  Write
$K_0^{\mathrm{vb}}(X)$ for the Grothendieck group of algebraic vector bundles and
$G_0(X)$ for the Grothendieck group of coherent sheaves.  The natural Cartan
homomorphism is
\[
 K_0^{\mathrm{vb}}(X)\longrightarrow G_0(X),\qquad [E]\longmapsto[E].
\]
By \cite[Section~15.1, pp.~281--282]{fultonIntersectionTheory1998}, this is an isomorphism
because $X$ is smooth. Hence, there is an isomorphism between the
$K$-theory and Grothendieck group of $X$:
\begin{equation}\label{eq:ordinary-K-definition}
 K(X)=K_0^{\mathrm{vb}}(X)
 \cong G_0(X).
\end{equation}
The product in $K(X)$ is induced by taking tensor products of vector
bundles.  For a commutative ring $\kk$, we write $K(X;\kk)=\kk\otimes_\Z K(X)$.

Now let $\kk$ be a field of characteristic not dividing $e$, let $\zeta\in\kk$ be a primitive $e$th root of unity, let $\charge=(\kappa_1,\dots,\kappa_r)\in\Z^r$ be a multicharge, and let $\HL[n](\kk)=\Hn(\kk)$ be the cyclotomic Hecke algebra with $q=\zeta$ and $Q_m=\zeta^{\kappa_m}$.  Then $I=\Z/e\Z$, with $c\in\Z/e\Z$ corresponding to $\zeta^c$, $\Lambda=\sum_m\Lambda_{\bar\kappa_m}$, where $\bar\kappa_m$ is the residue of $\kappa_m$ modulo $e$, and for $\alpha\in Q^+_n$ we write $\HL[\alpha](\kk)$ for the summand of $\HL[n](\kk)$ of \cref{C:blocks}, which is identified with $\Rn[\alpha](\kk)$ by \cref{thm:BK}, and $1_\alpha$ for its identity element.  Write $\Lambda=\sum_iw_i\Lambda_i$ and $\alpha=\sum_iv_i\alpha_i$.
In the notation of \cite[\S 5.3]{KrylovPaegelowShlykov26} and \cite{Paegelow26}, this is $\ell=e$ and $s_m=\bar\kappa_m$, the group $\Gamma_{\mathbf s}$ is generated by the element $g$ in the proof below, and $\mathbf d=\alpha$: the component $\M^{\Gamma_{\mathbf s}}_{\mathbf d}$ of the fixed locus is $\NLa$, and the block $B_{\mathbf d}$ is $\HL[\alpha](\C)$ accounting for different conventions in the Hecke relations.

\begin{proof}[Proof of \cref{thm:C}]
Parts (1) and (2) follow from Proposition 5.3.
For (3), first let $\kk=\C$.  In this case, (3) is \cite[Theorem~5.4]{KrylovPaegelowShlykov26}, whose hypothesis \cite[Conjecture~4.6]{KrylovPaegelowShlykov26} holds by \cref{P:MainC}, since their proof uses it only through \cite[Proposition~4.7]{KrylovPaegelowShlykov26}.  Since the isomorphism there is left a bit implicit, let us explain it in more detail, following \cite[\S 5.3]{KrylovPaegelowShlykov26}.

We can consider the tuple $g=(q,Q_1,\dots,Q_r)=(\zeta, \zeta^{\kappa_1}, \dots, \zeta^{\kappa_r})$ as an element of $\TC$.
The fixed locus $\mathcal{G}(n,r)^g$ is the disjoint union of its irreducible components, which are the quiver varieties of the cyclic quiver with framing $\Bw$ and highest weight $\Lambda$ \cite[Theorem~2.15 \& Proposition~2.17]{Paegelow26}.  The dimension vectors that occur are those $\alpha\in Q^+_n$ with $\NLa\neq\emptyset$, so $K(\mathcal{G}(n,r)^g;\C)=\bigoplus_\alpha K(\NLa;\C)$ \cite[Proposition~5.3]{KrylovPaegelowShlykov26}.  Let $\C_g$ denote $\C$ regarded as an $\SA$-algebra via $q\mapsto\zeta$ and $Q_m\mapsto\zeta^{\kappa_m}$; since $\zeta\neq1$, this factors through $\SAc$.  Since $g$ has order $e$, it acts fiberwise on the restriction of a $\TC$-equivariant vector bundle $E$ to $\mathcal{G}(n,r)^g$, which splits into eigenbundles $E|_{\mathcal{G}(n,r)^g}=\bigoplus_{c\in\Z/e\Z}E_c$, with $g$ acting on $E_c$ by $\zeta^c$.  The specialization map
\begin{equation}\label{eq:sp-g}
  \mathrm{sp}_g\colon\C_g\otimes_{\SA}\KT(\mathcal{G}(n,r))\longrightarrow K(\mathcal{G}(n,r)^g;\C),\qquad [E]\longmapsto\sum_{c\in\Z/e\Z}\zeta^c[E_c]
\end{equation}
is a ring homomorphism; by the localization theorem it is an isomorphism, but we will not need this.  Composing it with \cref{P:MainC}(2) and the base change isomorphism of \cite[Theorem~1.4(2)]{huCocenterCyclotomic2022}, we obtain a ring homomorphism
\[
  \Phi\colon Z(\HL[n](\C))\cong\C_g\otimes_{\SAc}Z(\HSAcn)\cong\C_g\otimes_{\SA}\KT(\mathcal{G}(n,r))\xrightarrow{\ \mathrm{sp}_g\ }\bigoplus_{\alpha\in Q^+_n}K(\NLa;\C)
\]
with $\Phi(e_i(L_1,\dots,L_n))=\mathrm{sp}_g[\Lambda^i\mathcal{V}]$, where $\mathcal{V}$ is the tautological bundle of $\mathcal{G}(n,r)$.

On $\NLa$ the tautological bundle splits as $\mathcal{V}=\bigoplus_cV_c$, where the $V_c$ are the tautological bundles of the quiver variety, so $\mathrm{sp}_g[\Lambda^i\mathcal{V}]$ is the $i$th elementary symmetric function in the elements $\zeta^cx$, where $x$ runs over the $K$-theoretic Chern roots of $V_c$.  On the Hecke side, $L_ae(\mathbf{i})=\zeta^{i_a}(1+y_a)e(\mathbf{i})$ by \cref{thm:BK} with $b(u)=1+u$.
As shown in \cref{eq:JM-colored-nilpotent-action,eq:JM-colored-symmetric-action}, the idempotent $1_\alpha$ and the elements $z_{c;j}=\sum_{\mathbf{i}\in I^\alpha}e_j(L_a\mid i_a=c)e(\mathbf{i})$ are explicit symmetric polynomials in $L_1,\dots,L_n$.  Since $x-1$ and the $y_a$ are nilpotent, the same polynomials in the roots $\zeta^cx$ give the idempotent of $\NLa$ and the classes $\zeta^{cj}[\Lambda^jV_c]$.  Hence $\Phi$ maps $Z(\HL[\alpha](\C))$ to $K(\NLa;\C)$ (this recovers \cite[Proposition~5.3]{Paegelow26}), sending $\zeta^{-cj}z_{c;j}=\sum_{\mathbf{i}}e_j(1+y_a\mid i_a=c)e(\mathbf{i})$ to $[\Lambda^jV_c]$; that is, $e_{c;j}$ goes to $e_j(x-1)$, over the roots $x$ of $V_c$.  The classes $[\Lambda^jV_c]$, together with the inverses of the line bundles $\Lambda^{v_c}V_c$, generate $K(\NLa)$ as a ring by \cref{lem:Kirwan-K}, and the image of $\Phi$, being a finite-dimensional algebra, contains the inverse of each of its units.  So $\Phi\colon Z(\HL[\alpha](\C))\to K(\NLa;\C)$ is surjective.  Both sides have dimension $|\Parts[\alpha]|$.  Hence $\Phi$ restricts to an isomorphism $Z(\HL[\alpha](\C))\cong K(\NLa;\C)$.  This proves (3) for $\kk=\C$.

Now let $\kk$ be arbitrary.   Let $\zeta_{\C}=\exp(2\pi i/e)$ and let $\mathscr{O}=\Z[\zeta_{\C},1/e]\subset\C$, the ring of integers of $\Q(\zeta_\C)$ with $e$ inverted.  Since the characteristic of $\kk$ does not divide $e$, there is a ring homomorphism $\mathscr{O}\to\kk$ with $\zeta_\C\mapsto\zeta$.

The element $\zeta_\C-1$ is a unit in $\mathscr{O}$, since $\prod_{k=1}^{e-1}(1-\zeta_\C^k)=e$.  So \cref{P:MainC}(1) applies to $R=\mathscr{O}$ with $q=\zeta_\C$ and $Q_m=\zeta_\C^{\kappa_m}$: $Z(\HL[n](\mathscr{O}))$ is a free $\mathscr{O}$-module, and by \cite[Theorem~1.4(2)]{huCocenterCyclotomic2022} its base changes to $\C$ and to $\kk$ are $Z(\HL[n](\C))$ and $Z(\HL[n](\kk))$.

Furthermore, the idempotent $1_\alpha$ and the elements $z_{c;j}$ are defined over $\mathscr{O}$: the generalized-eigenspace projectors are polynomials in the $L_a$ with coefficients in $\mathscr{O}$, because the differences $\zeta_\C^c-\zeta_\C^{c'}$ for $c\neq c'$ in $\Z/e\Z$ are units in $\mathscr{O}$.  So $1_\alpha Z(\HL[n](\mathscr{O}))$ is a lattice in $Z(\HL[\alpha](\C))$ whose base change to $\kk$ is $Z(\HL[\alpha](\kk))$.

The isomorphism $\Phi$ maps $1_\alpha Z(\HL[n](\mathscr{O}))$ into $K(\NLa;\mathscr{O})$: by \cref{P:MainC}(1), $Z(\HL[n](\mathscr{O}))$ is generated by $e_1(L),\dots,e_n(L)$, whose images $\mathrm{sp}_g[\Lambda^i\mathcal{V}]$ have coefficients in $\mathscr{O}$.  The image contains the classes $[\Lambda^jV_c]=\Phi(\zeta_\C^{-cj}z_{c;j})$, and also the inverse of $[\Lambda^{v_c}V_c]$, since $z_{c;v_c}$ is invertible in $1_\alpha Z(\HL[n](\mathscr{O}))$, its inverse being the same sum with each $L_a$ replaced by $L_a^{-1}$.  By \cref{lem:Kirwan-K}, the map to $K(\NLa;\mathscr{O})$ is surjective; it is injective since it is the restriction of an isomorphism over $\C$.  Tensoring the resulting isomorphism $1_\alpha Z(\HL[n](\mathscr{O}))\cong K(\NLa;\mathscr{O})$ with $\kk$ over $\mathscr{O}$ gives $Z(\HL[\alpha](\kk))\cong K(\NLa;\kk)$.
\end{proof}

\subsection*{Acknowledgements}

We thank Vasily Krylov, Rapha\"el Paegelow and Pavel Shlykov for helpful discussions on their work \cite{KrylovPaegelowShlykov26}.
JH was supported by the National Natural Science Foundation of
China (No. 12431002). AM was partially supported by the
Australian Research Council DP240101809. BW was supported by the NSERC through a Discovery Grant. This research was supported in part by Perimeter Institute for Theoretical Physics. Research at Perimeter Institute is supported in part by the Government of Canada through the Department of Innovation, Science and Economic Development Canada and by the Province of Ontario through the Ministry of Colleges and Universities. We acknowledge the use of AI in
proofreading this manuscript.

\printbibliography

\end{document}

%% file: preamble.tex
\usepackage[english]{babel}
\usepackage{csquotes}
\usepackage[letterpaper,top=2cm,bottom=2cm,left=3cm,right=3cm,marginparwidth=1.75cm]{geometry}
\usepackage{tikz}
\usepackage{quiver}
\usepackage{xparse}
\usepackage{enumitem}
\usepackage[maxbibnames=99,backend=biber,url=false,style=alphabetic,
  sorting=nyt,doi=true,eprint=false]{biblatex}
\renewbibmacro*{url+urldate}{%
}
\AtEveryBibitem{
  \clearfield{isbn}
  \clearfield{issn}
  \clearlist{location}
  \clearfield{month}
  \clearfield{day}
  \clearfield{series}
  \clearfield{pages}
  \clearlist{language}
  \clearlist{translator}
  \clearfield{urldate}
  \clearfield{pubstate}
  \clearfield{url}
  \renewbibmacro{in:}{}
}
\usepackage{amsmath,amsfonts,amssymb,xcolor}

\usepackage{graphicx}
\usepackage[colorlinks=true,allcolors=blue]{hyperref}
\usepackage{zref-clever}
\usepackage{aliascnt}
\zcsetup{cap}
\providecommand{\cref}[1]{\zcref{#1}}
\providecommand{\Cref}[1]{\zcref{#1}}

\zcRefTypeSetup{equation}{
  name-sg=,
  Name-sg=,
  name-pl=,
  Name-pl=
}

\NewDocumentCommand{\NewTheoremWithZref}{ m o m m }{%
  \IfNoValueTF{#2}{%
    \newtheorem{#1}{#3}%
    \zcRefTypeSetup{#1}{%
      name-sg=#3, Name-sg=#3, name-pl=#4, Name-pl=#4%
    }%
  }{%
    \newaliascnt{#1}{#2}%
    \newtheorem{#1}[#1]{#3}%
    \aliascntresetthe{#1}%
    \zcRefTypeSetup{#1}{%
      name-sg=#3, Name-sg=#3, name-pl=#4, Name-pl=#4%
    }%
  }%
}

\numberwithin{equation}{section}

\newtheorem{itheorem}{Theorem}
\zcRefTypeSetup{itheorem}{
  name-sg=theorem,
  Name-sg=Theorem,
  name-pl=theorems,
  Name-pl=Theorems
}
\newtheorem{theorem}{Theorem}[section]

\NewTheoremWithZref{corollary}[theorem]{Corollary}{Corollaries}
\NewTheoremWithZref{lemma}[theorem]{Lemma}{Lemmas}
\NewTheoremWithZref{proposition}[theorem]{Proposition}{Propositions}
\NewTheoremWithZref{assumption}[theorem]{Assumption}{Assumptions}
\NewTheoremWithZref{definition}[theorem]{Definition}{Definitions}
\NewTheoremWithZref{ques}[theorem]{Question}{Questions}
\NewTheoremWithZref{conjecture}[theorem]{Conjecture}{Conjectures}
\NewTheoremWithZref{algorithm}[theorem]{Algorithm}{Algorithms}

\theoremstyle{remark}
\NewTheoremWithZref{remark}[theorem]{Remark}{Remarks}
\NewTheoremWithZref{examplex}{Example}{Examples}

\newcommand{\Z}{\mathbb{Z}}
\newcommand{\Q}{\mathbb{Q}}
\newcommand{\C}{\mathbb{C}}
\newcommand{\F}{\mathbb{F}}
\newcommand{\Bv}{\mathbf{v}}
\newcommand{\Bw}{\mathbf{w}}

\newcommand{\Be}{\mathbf{e}}
\newcommand{\kk}{\Bbbk}

\newcommand{\al}{\alpha}
\newcommand{\fM}{\mathfrak{M}}
\newcommand{\fMo}{\fM(\Bv-n\Be_k,\Bw)}
\newcommand{\fMt}{\fM(\Bv,\Bw)}
\newcommand{\fMf}[2]{\fM_{#1;#2}(\Bv,\Bw)}
\newcommand{\fL}{\mathfrak{L}}
\newcommand{\fP}{\mathfrak{P}}
\newcommand{\tU}{\mathcal{U}}
\newcommand{\Tr}{\operatorname{Tr}}
\newcommand{\Sym}{\mathfrak{S}}
\newcommand{\SymR}{\operatorname{Sym}}

\newcommand{\cl}{\operatorname{cl}}
\newcommand{\NH}{\operatorname{NH}}

\renewcommand\Rn[1][n]{\mathsf{R}^\Lambda_{#1}}

\usepackage{mathtools}
\usepackage{mathrsfs}
\usepackage{amscd}

\DeclarePairedDelimiterX{\set}[1]{\{}{\}}{\setargs{#1}}
\NewDocumentCommand{\setargs}{>{\SplitArgument{1}{|}}m}{\setargsaux#1}
\NewDocumentCommand{\setargsaux}{mm}{\,\IfNoValueTF{#2}{#1}{#1\,\delimsize|\,\mathopen{}#2}\,}

\DeclareMathOperator\id{id}

\newcommand\blam{{\boldsymbol{\lambda}}}
\newcommand\bmu{{\boldsymbol{\mu}}}

\newcommand\JM{{\text{JM}}}
\newcommand\bQ{\mathbf{Q}}

\newcommand\NLa{\M_{\Lambda,\alpha}}

\newcommand{\M}{\mathfrak{M}}

\newcommand{\HH}{\mathscr{H}}
\newcommand\Hn[1][n]{\HH^{\bQ}_{q,#1}}

\newcommand\HL[1][n]{\HH^\Lambda_{\zeta,#1}}

\newcommand{\KC}{\mathcal{K}}
\newcommand{\LC}{\mathcal{L}}
\newcommand{\TC}{\mathcal{T}}
\newcommand\KT{\KC_{\TC}}
\newcommand\Lrn{\LC(n,r)}

\newcommand\SA{{\mathscr{A}}}
\newcommand\SAc{{\SA^\circ}}

\newcommand\HSAcn{\HH^\circ_{q,n}}

\newcommand\PP{\mathscr{P}}
\NewDocumentCommand\Parts{ D(){r} O{n}}{\PP^{#1}_{#2}}
\NewDocumentCommand\bParts{D(){s,r}O{b,n}}{\PP^{#1}_{#2}}
\newcommand\charge{{\boldsymbol{\kappa}}}
\DeclareMathOperator{\Frac}{Frac}
\DeclareMathOperator{\Hom}{Hom}
\newcommand{\Gm}{\mathbb{G}_m}